\documentclass[reqno,11pt]{amsart}
\usepackage[utf8]{inputenc}
\usepackage[dvipsnames,prologue]{xcolor} 
\usepackage{calc,graphicx,amsfonts,amsthm,amscd,amsmath,amssymb,enumerate,dsfont,mathrsfs,paralist,bbm}


\usepackage{natbib}

\usepackage{verbatim}
\usepackage{caption}
\usepackage{microtype}
\usepackage{subcaption}
\usepackage{pdfsync}
\usepackage{stmaryrd}
\usepackage{etoolbox}
\usepackage{marginnote}
\usepackage{hyperref}
\usepackage{textcase}
\usepackage[capitalize]{cleveref}

\usepackage{lmodern}
\usepackage{subcaption} 
\usepackage{chngcntr}
\usepackage{apptools}
\AtAppendix{\counterwithin{Alemma}{section}}

\usepackage{tikz}
\usepackage{floatrow}
\usepackage{xcolor}
\usetikzlibrary{arrows}
\usetikzlibrary[patterns]
\usetikzlibrary{decorations.pathreplacing}
\usetikzlibrary{patterns.meta}
\usepackage{tkz-euclide}
\definecolor{ffqqqq}{rgb}{1,0,0}
\definecolor{qqffqq}{rgb}{0,1,0}
\definecolor{ffffff}{rgb}{1,1,1}
\definecolor{ttqqqq}{rgb}{0.07,0.07,0.07}
\colorlet{ColorGray}{gray!30}
\usetikzlibrary{shapes.misc}
\usetikzlibrary{shapes.geometric}
\usetikzlibrary{arrows.meta}
\usetikzlibrary{decorations}
\usetikzlibrary{spy}

\tikzset{cross/.style={cross out, draw=black, minimum size=2*(#1-\pgflinewidth), inner sep=0pt, outer sep=0pt}, cross/.default={1pt}}

\usepackage{booktabs}
\setlength{\oddsidemargin}{6mm}
\setlength{\evensidemargin}{6mm}
\setlength{\textwidth}{145mm}

\captionsetup[subfigure]{subrefformat=simple,labelformat=simple}

\newcommand{\bone}{{\ensuremath{\mathbf 1}} }
\newcommand{\bzero}{{\ensuremath{\mathbf 0}} }

\reversemarginpar
\newlength\fullwidth
\setlength\fullwidth{\textwidth+2\marginparsep}

\numberwithin{equation}{section}

\DeclareMathSymbol{\leqslant}{\mathalpha}{AMSa}{"36} 
\DeclareMathSymbol{\geqslant}{\mathalpha}{AMSa}{"3E} 
\DeclareMathSymbol{\eset}{\mathalpha}{AMSb}{"3F}     
\renewcommand{\b}{\beta}

\def\1{\ifmmode {1\hskip -3pt \rm{I}} \else {\hbox {$1\hskip -3pt \rm{I}$}}\fi}

\newcommand{\ent}{{\rm Ent} }
\newcommand{\var}{\operatorname{Var}}


\newcommand{\id}{\mathbbm{1}}

\renewcommand{\b}{\beta}
\renewcommand{\l}{\lambda}
\renewcommand{\L}{\Lambda}

\renewcommand{\l}{\lambda}
\renewcommand{\a}{\alpha}
\renewcommand{\d}{\delta}
\renewcommand{\t}{\tau}

\newcommand{\g}{\gamma}
\newcommand{\G}{\Gamma}

\newcommand{\e}{\varepsilon}

\newcommand{\gap}{{\rm gap}}


\newtheorem{theorem}{Theorem}[section]
\newtheorem*{theorem*}{Theorem}
\newtheorem{lemma}[theorem]{Lemma}

\newtheorem{proposition}[theorem]{Proposition}
\newtheorem{corollary}[theorem]{Corollary}
\newtheorem{remark}[theorem]{Remark}

\newtheorem{claim}[theorem]{Claim}
\newtheorem{definition}[theorem]{Definition}

\newtheorem*{question*}{Question}

\newtheorem*{remark*}{Remark}
\newtheorem*{idefinition*}{Definition}

\newtheorem*{pseudotheorem*}{Pseudo-Theorem}
\newtheorem{conjecture}{Conjecture}
\newtheorem{application}{Application}


\newcommand{\cB}{\ensuremath{\mathcal B}}

\newcommand{\cD}{\ensuremath{\mathcal D}}
\newcommand{\cE}{\ensuremath{\mathcal E}}
\newcommand{\cF}{\ensuremath{\mathcal F}}

\newcommand{\cH}{\ensuremath{\mathcal H}}

\newcommand{\cL}{\ensuremath{\mathcal L}}

\newcommand{\cP}{\ensuremath{\mathcal P}}


\newcommand{\bbE}{{\ensuremath{\mathbb E}} }

\newcommand{\bbN}{{\ensuremath{\mathbb N}} }

\newcommand{\bbP}{{\ensuremath{\mathbb P}} }

\newcommand{\bbR}{{\ensuremath{\mathbb R}} }

\newcommand{\bbZ}{{\ensuremath{\mathbb Z}} }
\newcommand{\Z}{{\ensuremath{\mathbb Z}} }

\let\a=\alpha \let\b=\beta   \let\d=\delta  \let\e=\varepsilon
 \let\g=\gamma       \let\l=\lambda
            
  \let\s=\sigma \let\t=\tau   
  
   \let\G=\Gamma  \let\L=\Lambda 
\let\O=\Omega      



\title[Long time behaviour of one facilitated KCM]{Long time behaviour of one facilitated kinetically constrained models: results and open problems}

\author[Fabio Martinelli]{Fabio~Martinelli}
 \address{Dipartimento di Matematica e Fisica, Universit\`a Roma Tre}\email{fabio.martinelli@uniroma3.it}
 \author[Assaf Shapira]{Assaf Shapira}\address{MAP5, Universit\'e Paris Cit\'e}
 \email{assaf.shapira@math.cnrs.fr}
 \author[Cristina Toninelli]{Cristina Toninelli}
 \address{CEREMADE, CNRS, UMR 7534, Universit\'e Paris-Dauphine, PSL University}
 \email{toninelli@ceremade.dauphine.fr}

\begin{document}

\begin{abstract}
Kinetically constrained models (KCMs) are interacting particle systems introduced in the ‘80s by physicists to have accessible stochastic models with glassy-type dynamics. The key mechanism behind the complex evolution of these otherwise simple models is the so-called dynamical facilitation, a feature embedded into the models via appropriate kinetic constraints. KCMs are reversible with respect to a Bernoulli product measure, and the analysis of their stationary evolution has witnessed significant progress in the last decade. Unfortunately, in the interesting regime when the equilibrium density of the facilitating vertices is small, many fundamental questions concerning the non-stationary evolution of even the simplest models remain unsolved. In this paper, we discuss some of these questions, along with partial new results and conjectures, for the one facilitated model and its variants, as well as for the biased annihilating branching process.    
\end{abstract}
 \maketitle
\section{Introduction}
\subsection{State of the art and some conjectures}
Fredrickson–Andersen 1-spin facilitated model (FA-1f) is an interacting
particle system that belongs to the class of kinetically constrained models (KCMs) (see \cite{HT} for a recent review).  KCMs are Glauber-type Markov processes on $\bbZ^d$ (or on suitable other graphs) informally defined as follows. Call a vertex $x$ infected if its
state is ``0'' and healthy if ``1”. Then each  vertex becomes infected at rate $q$ and healthy at rate  $1-q$ provided a certain local {\sl constraint} is satisfied. The parameter $q\in(0,1)$ is called the {\sl infection density}.

The key feature shared by all KCMs is that the local constraint at each vertex $x$ depends only on the current state of a suitable neighborhood of $x$ and not on the state of $x$ itself. A key example is the FA-1f  model, whose constraint simply checks if there is an infection among the nearest neighbors of $x$. 
It can be easily verified that KCMs are reversible with respect to the Bernoulli product measure $\pi$ with density $q$ for the infected vertices. Despite this trivial equilibrium measure, establishing the long-time behavior of KCMs poses very challenging and interesting problems. 
Indeed, though major progress has been made in the last 20 years towards a full and rigorous understanding of the large-time behavior of the stationary process (see \cite[chapters 4-6]{HT} ), robust tools to analyze FA-1f  and general KCMs out of equilibrium are still lacking and several beautiful questions remain open (see \cite[chapter 7]{HT} ). For clarity of exposition, in the sequel, unless otherwise explicitly stated, we will use the one-dimensional lattice $\mathbb {Z} $ as the underlying graph. 

The first natural question concerns the set of stationary measures for the process. It can be proved by entropy production methods (see Corollary  \ref{cor:stationary measures}) that any stationary measure is a convex combination of the completely healthy configuration and $\pi$. In higher dimensions, the same holds for translation-invariant stationary measures.

A second basic question is to determine under which conditions on $q$ and on the initial distribution $\nu$, the law  of the process at time $t$ converges to the equilibrium measure $\pi$ as $t\to \infty$. Natural choices for $\nu$ are the Bernoulli product measure $\mu_{q_0}$ with $q_0>0, q_0\neq q$ or the Dirac mass on the configuration with a single infection at the origin. For FA-$1$, it is natural to conjecture the following.
\begin{conjecture}
\label{conj:1}
For any $q\in (0,1)$ and any function $f$ depending on the state of finitely many vertices
\begin{equation}
\label{eq:conj1}
\lim_{t\to\infty}|\bbE_{\nu
}(f(\eta(t)))-\pi(f)|=0,
\end{equation}
provided that $\nu(\exists \text{ an infected vertex}) =1.$
\end{conjecture}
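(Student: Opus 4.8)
The plan is to establish convergence to $\pi$ by combining an ergodicity (absence of blocked configurations) argument with the characterization of stationary measures quoted above as Corollary~\ref{cor:stationary measures}. First I would observe that, since $f$ depends on finitely many vertices and the semigroup $P_t$ is a contraction on $L^\infty$, it suffices to prove that any weak-$*$ subsequential limit $\nu_\infty$ of the laws $\nu P_t$, $t\to\infty$, is the measure $\pi$. Indeed, if all such limits coincide with $\pi$, then $\bbE_\nu(f(\eta(t)))\to\pi(f)$ for every local $f$, which is \eqref{eq:conj1}. By a standard argument (Cesàro averages of $\nu P_t$ are tight on the compact space $\{0,1\}^\Z$, and any limit point of $\frac1T\int_0^T \nu P_t\,dt$ is stationary) one produces at least one stationary limit; the real task is to control \emph{every} subsequential limit of $\nu P_t$ itself and to show the completely healthy configuration $\mathbf 1$ carries no mass.

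Next I would rule out the atom at $\mathbf 1$. By Corollary~\ref{cor:stationary measures}, any stationary $\nu_\infty$ is $\alpha\,\delta_{\mathbf 1}+(1-\alpha)\pi$ for some $\alpha\in[0,1]$, so it is enough to show $\nu_\infty(\{\mathbf 1\})=0$. The hypothesis $\nu(\exists\text{ an infected vertex})=1$ says the process starts, almost surely, with at least one infection. The key dynamical input is that in FA-1f on $\Z$ a single infection cannot disappear without first facilitating a neighbor: the infection performs a kind of (biased) random-walk-with-branching motion and, crucially, the event that the set of infections ever becomes empty has probability zero when started from a non-empty configuration — equivalently, $\mathbf 1$ is only reachable in the time-reversed sense, never forward. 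I would make this precise by a martingale/Lyapunov argument: exhibit a function of the infection configuration (e.g. related to the position of the leftmost infection, or to $\sum_x 2^{-|x|}(1-\eta_x)$) whose evolution shows the number of infections stays positive for all time a.s., hence $\delta_{\mathbf 1}P_t=\delta_{\mathbf 1}$ is the only way to sit on $\mathbf 1$ and $\nu$ puts no mass there. Consequently $\nu P_t(\{\mathbf 1\})=0$ for all $t$, and this passes to the limit since $\{\mathbf 1\}$ is closed, giving $\alpha=0$ and $\nu_\infty=\pi$ for every stationary limit point.

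To upgrade ``every Cesàro/subsequential \emph{stationary} limit equals $\pi$'' to ``$\nu P_t\Rightarrow\pi$'', I would use that $\pi$ is itself a fixed point together with a coupling/monotone-class or a relative-entropy-decay argument: the relative entropy (or $\chi^2$-divergence) of $\nu P_t$ with respect to $\pi$, when finite, is nonincreasing by reversibility, so it converges; if one can show the only configurations with zero entropy production are those already equal to $\pi$ (this is exactly the entropy-production computation behind Corollary~\ref{cor:stationary measures}), then $\nu P_t\to\pi$. For general $\nu$ one reduces to this case by first running the dynamics a short time to regularize, or by approximating $\nu$ from above and below by product measures and using attractiveness-type comparisons where available.

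The main obstacle I expect is precisely the step controlling \emph{all} subsequential limits of $\nu P_t$ (not just the Cesàro average) and, hand in hand with it, quantifying that the infections genuinely spread rather than merely persist: one must exclude scenarios in which the law at time $t$ oscillates, or in which infections survive but retreat to infinity leaving every fixed local window asymptotically healthy without the limit being $\delta_{\mathbf 1}$ — wait, such a limit \emph{would} be $\delta_{\mathbf 1}$ locally, which is why the no-extinction Lyapunov estimate must be supplemented by a quantitative \emph{local} recurrence statement (the infection set visits every fixed vertex, or at least every fixed window, at a nonvanishing rate). Absent the powerful equilibrium tools, making this local recurrence robust out of equilibrium, uniformly in the (possibly very sparse) initial law, is the crux; this is exactly the difficulty the paper flags as open, so I would expect the honest outcome to be a proof conditional on such a recurrence input rather than an unconditional one.
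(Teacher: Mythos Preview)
The statement you are attempting to prove is labeled a \emph{conjecture} in the paper, not a theorem; the authors state plainly that ``robust tools to prove Conjectures~\ref{conj:1} and~\ref{conj:2} are not yet available, and the results are limited to $q$ larger than a certain threshold.'' There is therefore no proof in the paper to compare against. Your outline does recover the architecture one would like: classify stationary measures via Corollary~\ref{cor:stationary measures}, invoke that weak limit points of $\nu P_t$ are stationary, and rule out mass at $\mathbf 1$. You are also honest in your last paragraph that the crux is a \emph{local} persistence statement --- infections must recur in every fixed window, not merely survive somewhere --- and that this is exactly what is missing.

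Several of the specific tools you propose, however, are explicitly ruled out in the paper and cannot close the gap. The relative-entropy route fails because the logarithmic Sobolev constant of FA-1f on $\bbZ$ is infinite (and is $\Theta(n)$ on intervals of size $n$); monotonicity of entropy gives convergence of the entropy, not convergence to zero. The ``attractiveness-type comparisons'' are unavailable: FA-1f is \emph{not} attractive, as the paper stresses, precisely because extra infections can facilitate healing. And the Lyapunov candidate $\sum_x 2^{-|x|}(1-\eta_x)$ detects at best global non-extinction (which is trivial anyway: an isolated infection cannot heal), not local recurrence --- a single infection drifting to $+\infty$ sends that sum to $0$ while the infection count stays equal to $1$, and the resulting local limit is exactly $\delta_{\mathbf 1}$. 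So your conclusion is the correct one: the argument is conditional on a local-persistence input that is currently unavailable for general $q$; Sections~\ref{sec:poll}--\ref{sec:delta} show how such persistence \emph{can} be obtained when the model has enough orientation, which is precisely the structure pure FA-1f lacks.
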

The next obvious question concerns the speed of relaxation to the reversible measure $\pi$ under some condition on $\nu$ stronger than just the a.s. existence of some infection. 
\begin{conjecture}
 \label{conj:2}
If there exists $\kappa>0$ such that $\nu(\text{no infected vertices in $[-\ell,\ell]$})=O(e^{-\kappa \ell})$ for all $\ell>0$ large enough then  for all $q>0$ the convergence in \eqref{eq:conj1} is exponentially fast.
\end{conjecture}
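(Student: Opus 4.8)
\noindent\emph{A possible line of attack.} The natural strategy combines three ingredients: a reduction to finite volume by finite speed of information propagation, the positive spectral gap of FA-1f, and the hypothesis on $\nu$ used only to ``ignite'' the dynamics near the origin. One first reduces to $f$ the indicator of a cylinder event on a window $[-L,L]$, and to proving exponential decay of the total variation distance between the law of $\eta(t)$ and $\pi$, both restricted to $[-L,L]$. From the graphical construction and a standard large deviation estimate for the Poisson clocks there is a finite speed $v$ such that, off an event of probability at most $e^{-ct}$, the restriction of $\eta(t)$ to $[-L,L]$ coincides with that of the FA-1f process run in the box $\L_n:=[-n,n]$, $n:=\lceil 2vt\rceil$, with a frozen infected neighbour glued at each endpoint. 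This finite-volume chain is irreducible, reversible with respect to the marginal $\pi_{\L_n}$ of $\pi$ on $\L_n$, and has spectral gap bounded below uniformly in $n$ by a constant $\g=\g(q)>0$ (this is the finite-volume counterpart of the spectral gap of FA-1f on $\bbZ$). Observe also that, already on $\bbZ$, a configuration carrying at least one infection can never reach the completely healthy configuration, since deleting the last infection is forbidden by the constraint; hence the absorbing state is irrelevant here and only the rate of relaxation is at stake.

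At this point the obstruction becomes visible. The hypothesis on $\nu$ ensures that, except with probability $\d$, the box $\L_n$ contains an infection within distance $O(\log(1/\d))$ of the origin, which keeps the finite-volume chain away from its unique blocked configuration and, after a short bounded-range ``priming'' phase, makes a neighbourhood of the origin genuinely active; but feeding the resulting law into the spectral gap only bounds the window total variation distance by
\[
\Bigl\|\tfrac{d\nu_{\L_n}}{d\pi_{\L_n}}-1\Bigr\|_{L^2(\pi_{\L_n})}\;e^{-\g t},
\]
and for the natural initial laws this $L^2$ norm is far too large: for $\nu=\mu_{q_0}$ it equals $e^{\Theta(n)}=e^{\Theta(t)}$, and for $\nu$ a Dirac mass at a single infection it is infinite. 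One therefore needs a relaxation estimate robust to initial data extremely far from $\pi_{\L_n}$. The natural candidate is a logarithmic Sobolev inequality for FA-1f on $\L_n$ with slowly growing constant: together with the initial relative entropy $\Theta(n)$, a constant of order $o(n/\log n)$ would already yield convergence at the quasi-exponential rate $\exp(-t/\mathrm{polylog}(t))$, while a constant of order $O(\log n)$ would be needed for the genuine exponential rate. However the completely healthy configuration is a severe entropic bottleneck --- its $\pi_{\L_n}$-mass is $e^{-\Theta(n)}$, yet leaving it requires transporting an infection across $\L_n$ --- so no logarithmic Sobolev inequality of this quality holds, and this is precisely where the scheme stalls. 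Reaching the exponential rate seems to require avoiding altogether a box of size $\asymp t$.

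Two comments on what can nonetheless be done. If one strengthens the hypothesis to require that $\nu$ stochastically dominates, for the infection variables, a Bernoulli product measure of some fixed density $q''>0$, then one can work with an auxiliary dynamics in which infections are forced on a ``pinned'' set of positive density: between consecutive pins it is a product of bounded-size chains with infected boundary, hence uniformly gapped and exponentially relaxing, and the excess infections supplied by the hypothesis can be used to pay for unpinning, giving the exponential rate under the stronger assumption. For general $\nu$ satisfying only the stated hypothesis, the route would be a bisection/renormalisation scheme in the spirit of the equilibrium spectral gap proof, carried out for the non-stationary evolution, controlling inductively the renormalised relaxation rates as well as a time-uniform version of the lower bound on the density of infections near the origin; pushed through, this should give at least a stretched-exponential bound, and upgrading it to the exponential rate for all $q$ is precisely the open problem.
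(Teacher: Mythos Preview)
The statement you are addressing is \emph{Conjecture~\ref{conj:2}}, and the paper does not prove it: immediately after stating it, the authors write that ``robust tools to prove Conjectures~\ref{conj:1} and~\ref{conj:2} are not yet available, and the results are limited to $q$ larger than a certain threshold''. There is therefore no proof in the paper against which to compare your proposal, and your own write-up is, quite appropriately, not a proof either but a discussion of obstructions and partial routes.

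Your diagnosis of the obstruction is accurate and matches the paper's. The authors explicitly note that the Holley--Stroock strategy fails because ``the logarithmic Sobolev constant is either infinite for the process on the whole lattice, or very large (i.e.\ $\Theta(n)$) for the chain on the finite set $\L_n$''. This is exactly the bottleneck you identify after the finite-speed reduction and the $L^2$ spectral-gap bound: the prefactor blows up like $e^{\Theta(t)}$, and no logarithmic Sobolev inequality of the required quality is available because of the fully healthy state.

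Your first ``what can be done'' remark is close in spirit to the method the paper \emph{does} carry out, not for FA-1f with all $q$, but for the $\d$-West process (Theorem~\ref{thm:delta West}) and, as they recall, for FA-1f with $q>1/2$ in \cite{BCRT}. The scheme there is precisely a restricted chain on $\hat\O_{N,\ell}$ --- configurations with at least one infection in each block of length $\e t$ --- for which the logarithmic Sobolev constant is $O(\ell)=O(\e t)$ rather than $\Theta(t)$ (Lemma~\ref{lem:logsob}), combined with a persistence-of-infection estimate (Corollary~\ref{cor:persistence}) guaranteeing that the true process stays in $\hat\O_{N,\ell}$ with high probability. The missing ingredient for FA-1f at small $q$ is exactly this persistence estimate, which is why the conjecture remains open; your bisection/renormalisation suggestion for the general case is a reasonable speculation but is not developed in the paper either.
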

Unfortunately, robust tools to prove Conjectures \ref{conj:1} and \ref{conj:2} are not yet available, and the results are limited to $q$ larger than a certain threshold \cite{BCRT,Ertul,Mareche}. For general KCM on $\mathbb {Z} ^d$, $ d\ge 1$, there are results for high values of $q$ \cite{HTout} based on comparisons with suitable contact processes.
The only model for which the convergence to equilibrium is well understood for all $q>0$ and all $ d\ge 1$  is the East model (see \cite[Section 7.2]{HT} and references therein). 

The first and foremost reason for the  failure in proving the above conjectures for any $q>0$ is the fact that the process is not attractive (that is, the product partial order is not preserved by the semi-group of the process, see \cite[Sections II.2 and III.2]{Liggett05} for background). This is due to the fact that the presence of more infected sites may make certain constraints satisfied and therefore allow certain infected sites to become healthy. Consequently, many of the powerful techniques  (e.g.\ censoring or coupling arguments) which have been developed for the study of other Glauber dynamics (e.g.\ the contact process and stochastic Ising model, see \cite{Liggett05,Liggett99,Martinelli99}), fail here.

To make things worse, the usual Holley-Stroock strategy \cite{Holley89} to attack questions of this kind, for example, stochastic Ising models, does not apply here. Indeed, this approach utilizes the finiteness of the logarithmic Sobolev constant, which implies, in turn, the hypercontractivity of the semigroup. However, due to the presence of constraints, the logarithmic Sobolev constant is either infinite for the process on the whole lattice, or very large (i.e. $\Theta(n)$) for the chain on the finite set $\L_n=\{0,1,\dots,n\}$ with either appropriate infected boundary conditions ensuring irreducibility or conditioned on having at least one infection. In the latter setting, the mixing time of the chain scales linearly with $n$ and one expects the \emph{cutoff phenomenon} (see e.g. \cite[Chapter 18]{Peresbook}) to occur for all $q>0$. The cutoff has been proven in \cite{Ertul} for $q$ not too small.

Finally, a related question in higher dimensions is the scaling with $n$ of the mixing time of the (ergodic) chain on $\L^d_n$. The conjecture is a linear scaling for all values of $q>0$, but, once more, that has been proved only for $q$ large enough \cite{HTout}.

The goal of this review is to present some progress and past results around the above two conjectures for FA-1f  and three other closely related models on $\mathbb Z$: the 
\emph{East-polluted FA-1f }, the \emph{Weakly unoriented East process (or $\delta$-West model)}, and the 
\emph{Biased annihilating branching process (BABP)}. The paper is organized as follows. In Section \ref{sec:not-models}, we fix some notation and define the four models under consideration precisely. In Section \ref{sec:stationary measures} we present a general result on the stationary measures, while Sections 3,4, and 5 are devoted to results concerning the East polluted FA-1f  model, the $\d$-West model, and BABP. In Section 6, we present some preliminary results for FA-1f  starting from a finite number of vacancies.    

\subsection{Notation }
\label{sec:not-models}
 We will choose as a graph $\mathbb Z$ and we denote our \emph{configuration space} by $\Omega=\{0,1\}^{\mathbb Z}$.
Elements of $\Omega$ are called \emph{configurations} and denoted by Greek letters $\sigma,\eta,\omega,\dots$. For a configuration $\eta\in\Omega$ and a site $ x\in\bbZ$, $\eta_{x}$ denotes the \emph{state}  of $\eta$ at $ x$.  We say that $x$ is \emph{healthy} (or filled) if $\eta_{x}=1$ and \emph{infected} (empty) otherwise. More generally,  $\L\subset \bbZ$ is said to be infected if it contains an infected vertex and healthy otherwise. Sometimes, with an abuse of notation, we will apply the same notation to intervals $I\subset \bbR$ by identifying $I$ with $I\cap \bbZ$.

For $\eta\in\Omega$, we write $|\eta|=|\{x\in\bbZ:\eta_x=0\}|$ for the number of empty sites in $\eta$.
Given $q\in[0,1]$ we let $p=1-q$ and denote by $\pi$ the product Bernoulli($p$) measure on $\Omega$.  Given a probability measure $\nu$ on $\O,$ the mean and variance of a function $f:\Omega\to\bbR$ (if well defined) are denoted by $\nu(f)$ and $\var_\nu(f)$ or simply $\var(f)$ when $\nu=\pi$.

Sometimes we need to work on subsets $\Lambda\subset\bbZ$ of the lattice and in this case we write $\Omega_{\Lambda}$ for the corresponding configuration space $\{0,1\}^{\Lambda},$ and $\eta_\L$  for the 
\emph{restriction} of $\eta\in\Omega$ to $\Lambda$. 
Given two disjoint sets $\Lambda_1,\Lambda_2\subset \bbZ$, and $\eta^{(i)}\in \Omega_{\Lambda_i}, i=1,2,$ we write $\eta^{(1)}\cdot\eta^{(2)}\in \Omega_{\Lambda_1\cup\Lambda_2}$ for the configuration such that
\begin{equation}
\label{eq:def:concatenation}(\eta^{(1)}\cdot\eta^{(2)})_{ x}=\begin{cases}
\eta^{(1)}_{ x}& \text{if } x\in\Lambda_1,\\
\eta^{(2)}_{ x}& \text{if } x\in\Lambda_2.
\end{cases}\end{equation}
We denote the fully occupied (resp.\ empty) configurations in $\O$ by $\underline \bone$ (resp.\ $\underline \bzero$). Sometimes the same symbols are used to denote the same configurations in $\O_\L$ without the suffix $\L$ whenever it is clear from the context. Finally, we let $\eta^{ x}$ the configuration
obtained from $\eta$ by flipping its value
at $ x$,  i.e.
\begin{equation}
\label{eq:def:omega:flip}
\eta^{ x}_{ y}=
\begin{cases}
\eta_{ y} & y \neq  x,\\
1-\eta_{ x} & y= x.
\end{cases}\end{equation}

\subsection{The Markov process}
The Markov process that will be the object of this paper can be constructed via their self-adjoint Markov \emph{semigroup} $P_t:=e^{t\cL}$ on $\mathrm{L}^2(\pi)$, where the \emph{generator} $\cL$ is a non-negative self-adjoint operator with domain $\mathrm{Dom}(\mathcal L)$ that can be constructed in a  standard way (see e.g.\ \cite[Sections~I.3, IV.4]{Liggett05}) starting from its action on \emph{local functions} (i.e.\ functions depending on the occupancy variables on  a finite number sites):
\begin{equation}
\label{eq:generator}
\cL f=\sum_{ x\in \bbZ}c_{ x}\cdot \left(\pi_{ x}(f)-f\right).
\end{equation}
where each update rate $c_x(\cdot)$ is a local function specific to the model, which depends \emph{only} on the state of the neighbors of $x$ and \emph{not} on the state of $x$ itself, and which can be zero. 
Spelling out the definition of $\pi_{ x}$, the generator  can be equivalently rewritten as
\begin{equation}
\label{eq:generatorbis}
\cL f(\eta)=\sum_{ x\in \bbZ^d}c_{ x}(\eta)\left((1-\eta_{ x})(1-q)+\eta_{ x}q\right) (f(\eta^{ x})-f(\eta)).
\end{equation}
We further introduce the Dirichlet form 
$\cD:{\mbox{Dom}}(\cL)\to\mathbb R$ defined as  
 \begin{equation}\label{eq:dirichlet}
\cD(f)=- \pi(f \cdot \cL f)=\sum_{ x\in \bbZ}\pi\left(c_{ x} \var_{ x}(f)\right).
\end{equation}
Using the formulation \eqref{eq:generatorbis} it is not hard to verify that
\[
\cD(f)=\frac{1}{2}\int\pi(\mathrm{d}\eta)\sum_{ x\in \bbZ}c_{ x}(\eta)\left((1-\eta_{ x})(1-q)+\eta_{ x}q\right)\left(f(\eta^{x})-f(\eta)\right)^2.
\]
When the initial distribution at time $t=0$ is the probability measure $\nu$ on $\O$, the law and expectation of the KCM process on the Skorokhod space $D([0,\infty),\Omega)$ of c\`adl\`ag functions are denoted by $\bbP_{\nu}$ and $\bbE_{\nu}$ respectively (see \cite[Chapter III]{Billingsley99} for background). If $\nu$ is concentrated over a single configuration $\eta$ we
write simply $\bbP_\eta$ and $\bbE_\eta$ for $\bbP_\nu$ and $\bbE_\nu$, while if $\nu=\pi$, we simply write $\bbP$ and $\bbE$. We use $\eta(t)$ to denote the state of the KCM at time $t\ge 0$.
\subsection{One dimensional examples}
Here we briefly present the models that will be analyzed in this paper.
\subsubsection{The one facilitated process (FA-1f )}
For the FA-1f  process the constraint satisfies
\begin{equation}\label{cFA-1f }c_x(\eta)=
\id_{\{\eta_{x-1}+\eta_{x+1}\neq 2\}}.
\end{equation} 
Informally, FA-1f  can be described 
via its so-called \emph{graphical representation} as follows: 
each vertex ${x}\in\bbZ$ is equipped with a unit intensity Poisson process, whose atoms $t_{ x,k}$ for $k\in\bbN$ are the \emph{clock rings}. We are further given independent Bernoulli random variables $s_{x,k}$ with parameter $1-q$, called \emph{coin tosses}. At the clock ring $t_{ x,k}$, if the current configuration has at least one empty site among the nearest neighbours of $x$, we update the state of $x$ to $s_{ x,k}$. Such updates are called \emph{legal}. If, on the contrary, the above requirement or constraint is not satisfied, no update is performed, and the configuration remains unchanged.

FA-$1$f can also be defined on finite or infinite subsets $\Lambda\subset \mathbb Z$ (we write $\Lambda\Subset\bbZ$ when we assume that $\Lambda$ is finite). In this case, the most natural choice is to imagine that the configuration  is defined also outside $\Lambda$, where it is frozen and equal to some reference configuration $\sigma\in\Omega_{\bbZ\setminus\Lambda}$,  the \emph{boundary condition}. Then, for $ x\in\Lambda$, $\eta\in\Omega_{\Lambda}$, the constraint is defined as 
\begin{equation}
\label{eq:def:cx:finite}
c^{\sigma}_{ x}(\eta)= c_{ x}(\eta\cdot\sigma)
\end{equation}
(recall \eqref{eq:def:concatenation} and \eqref{cFA-1f }).  

The generator and  Dirichlet form of the process on $\Omega_{\Lambda}$ with boundary condition $\s$, denoted by $\cL^{\sigma}$ and
$\mathcal{D}^{\sigma}$ respectively, are obtained by restricting the sums in \eqref{eq:generator} and \eqref{eq:dirichlet} to sites in $\Lambda$ and substituting $c_{ x}$ with $c^{\sigma}_{ x}$. We similarly denote by $\bbP_{\nu}^{\sigma}$ and $\bbE_{\nu}^\sigma$
the law and expectation of the process with initial distribution $\nu$ and by $\eta^\sigma(t)$ the process at time $t$. Note that $\pi_\Lambda$ is reversible for this process.

\subsubsection{The biased annihilating branching process (BABP)}
The BABP constraints are now defined as
\begin{equation}\label{cBABP}c_x=2-\eta_{x-1}-\eta_{x+1}.\end{equation}
 Namely, if $x$ has two empty nearest neighbours, it is updated at rate $2$; if it has one empty nearest neighbour, it is updated at rate $1$; if it has no empty nearest neighbours, it cannot be updated. We refer the reader to \cite{SudburyNeuhauser93}.

\subsubsection{The East polluted FA-1f }\label{sec:East polluted}
The vertices of $\bbZ$ are divided  into two classes: those of type FA-1f  and those of type East. If $x$ is of type FA-1f  we write $x\sim F$, otherwise $x\sim E$.  We assume that there are infinitely many vertices of type East to the left and to the right of the origin. For example, the East sites could be distributed over $\Z$ according to a non-trivial product Bernoulli measure.  If all vertices of $\Z$ are of type East, then we call the model the \emph{East model}. 

The constraints in this case are set to 
\begin{equation}
    \label{eq:EF constraints}
    c_x(\eta)=
    \begin{cases}
       \id_{\{\eta_{x-1}+\eta_{x+1}\neq 2\}}  & \text{ if $x\sim F$}\\
       1-\eta_{x-1} & \text{ if $x\sim E$}.
    \end{cases}
\end{equation}
In other words, vertices of type FA-1f  are updated as in the FA-1f  process (cf. \eqref{cFA-1f }), while vertices of type East only query their left\footnote{In the original East model, the queried neighbor is the right one. Here we choose the left one in such a way that the growth of infection occurs from left to right.} neighbor to see if their constraint is satisfied, as in the East process.

\subsubsection{The weakly unoriented East process (or $\delta$-West process)}

Fix $\d\in [0,1]$. The constraints are  now defined as
\begin{equation}
\label{cdeltaWest}c_x=  (1-\eta_{x-1})+\delta (1-\eta_{x+1})
\end{equation}
Notice that for $\delta=0$, the $\delta$-West process is completely oriented and it becomes the East process, while for $\delta=1$, it becomes the BABP process (with no preferred direction).

\section{A general result on the stationary measures}
\label{sec:stationary measures}

\newcommand{\range}{\operatorname{r}}
\global\long\def\bp#1#2{\operatorname{BP}^{#2}(#1)}

In this part, we consider a general KCM on $\Z^d$, with constraint $c_x(\eta)$ which depends on the occupation of sites at a distance at most $\range$ from $x$. In the sequel for an integer $N$, we will write $\L_N$ for the box $\{-N,\dots,N\}^d$.
 It is not hard to verify that, since the constraint $c_{ x}(\eta)$ does not depend on $\eta_{x}$, the dynamics of all the models we defined satisfy detailed balance w.r.t.\ the product measure $\pi$. 
Therefore, $\pi$ is reversible (i.e.\ $\pi(f\cdot P_t g)=\pi(g\cdot P_t f)$ for all $f,g\in \mathrm{L}^2(\pi)$ and $t\ge0$) and therefore it is an invariant measure for the process (i.e.\ $\pi P_t=\pi$ for all $t\ge0$). However, $\pi$ is \emph{not} the unique invariant measure; indeed, the Dirac measure on the fully healthy  configuration is also invariant. 

Reference \cite{SudburyNeuhauser93} characterizes the stationary measures of BABP. Here we present a general argument which we apply, in addition to the models illustrated above, also to the East model and to the FA2f model in $\bbZ^d, d\ge 2$, for which the constraints require at least two infections among the nearest neighbors. We note that the same argument used for the East model can be applied to the North-East model, reproducing a result from \cite{KordzakhiaLalley}.

In order to understand the stationary measures of a KCM, we need two main ingredients: (i)the identification of the ergodic components; (ii) a detailed balance equation describing how probabilities change within an ergodic component.

To understand the ergodic component of a KCM, we need to introduce an intimately related deterministic process,
\emph{bootstrap percolation}.
\begin{definition}
The bootstrap percolation is a deterministic dynamics in discrete
time, where at each step, sites for which the KCM constraint is satisfied
becomes empty. This is a monotone process; hence, starting from any
configuration $\eta$, it converges to a limiting configuration that
we denote $\bp{\eta}{}$.
We say that a configuration $\eta$ is \emph{stable} for the bootstrap
percolation if $\bp{\eta}{}=\eta$, and denote the set of stable configurations
by $\mathcal{E}$. Note that the constant configurations $\underline \bzero$
and $\underline \bone$ always belong to $\mathcal{E}$.
Finally, we may consider the bootstrap percolation restricted to a
set $\Lambda\subset\Z^{d}$, with filled boundary conditions. It that
case, we denote the final configuration by $\bp{\eta}{\Lambda}$.
\end{definition}

The stable configurations for the bootstrap percolation characterize
the ergodic components of the KCM, as described in the following result \cite[Corollary 3.7]{HT}:
\begin{theorem}
Two configurations $\eta$ and $\eta'$ belong to the same ergodic
component (i.e., for any finite $\Lambda\subset\Z^{d}$ there is a
sequence of legal flips bringing $\eta$ to a configuration which
agrees with $\eta'$ on $\Lambda$) if and only if $\bp{\eta}{}=\bp{\eta'}{}$.
\end{theorem}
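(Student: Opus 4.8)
The plan is to prove the two implications separately, relying throughout on the monotone, idempotent nature of bootstrap percolation. Recall that legal flips for the KCM are exactly the flips at sites $x$ where $c_x(\eta)=1$, and that $c_x$ does not depend on $\eta_x$. The easy direction is: if $\eta$ and $\eta'$ lie in the same ergodic component, then $\bp{\eta}{}=\bp{\eta'}{}$. For this I would first establish the key invariance lemma: \emph{a single legal flip does not change the bootstrap closure}, i.e.\ if $\eta'$ is obtained from $\eta$ by one legal flip then $\bp{\eta}{}=\bp{\eta'}{}$. The inclusion $\bp{\eta}{}\subseteq\bp{\eta'}{}$ (meaning the infected set of the former is contained in that of the latter) follows because $\eta\le\eta'$ or $\eta'\le\eta$ fails in general, so instead one argues: the flip at $x$ was legal, so $c_x(\eta)=1$, hence in the bootstrap dynamics run from $\eta$ the site $x$ becomes infected at step one regardless; thus $\bp{\eta}{}=\bp{\eta\text{ with }x\text{ infected}}{}$, and symmetrically $\bp{\eta'}{}=\bp{\eta'\text{ with }x\text{ infected}}{}$, and these two configurations with $x$ infected are identical. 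Since the definition of ``same ergodic component'' only asks for agreement on arbitrarily large finite $\Lambda$ after finitely many flips, and since $\bp{\cdot}{}$ restricted to a fixed finite window is determined by the configuration on a bounded enlargement of that window, iterating the invariance lemma gives $\bp{\eta}{}=\bp{\eta'}{}$.

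The harder direction is the converse: if $\bp{\eta}{}=\bp{\eta'}{}=:\zeta$, then $\eta$ and $\eta'$ are in the same ergodic component. The strategy is to show that any configuration $\eta$ can be driven, by legal flips inside a large finite box, to agree with $\zeta$ on any prescribed finite $\Lambda$; since $\zeta$ depends only on the common closure and not on $\eta$ versus $\eta'$, this connects $\eta$ to $\eta'$ through $\zeta$. Concretely, fix $\Lambda$ and a finite box $B\supset\Lambda$ large enough that the bootstrap dynamics from $\eta$, restricted to $B$ with filled boundary, already agrees with $\zeta$ on $\Lambda$ after some number $T$ of steps (possible by locality of the constraint and finite speed of the bootstrap front). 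The bootstrap dynamics from $\eta$ on $B$ is a sequence of ``batch'' infections; I would replay this schedule one site at a time as a sequence of legal KCM flips $1\mapsto 0$, which is legitimate because at the moment a site is scheduled to be infected its constraint is satisfied by sites already infected (this uses monotonicity: infecting sites earlier only helps other constraints, for the models at hand whose constraints are monotone in the infected set — FA-1f, BABP, East, $\delta$-West, FA-2f all have this property). After $T$ such flips the configuration agrees with $\zeta$ on $\Lambda$. Doing the same from $\eta'$ and concatenating the reversed chain of flips exhibits the required legal path.

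The main obstacle I anticipate is the converse direction, and specifically two subtleties. First, one must be careful that ``legal flips'' allows both $0\mapsto1$ and $1\mapsto0$ at a satisfied site, so that after pushing $\eta$ to $\zeta$ on $\Lambda$ one can \emph{reverse} the analogous push from $\eta'$; reversibility of the flip relation (if the flip $\eta\to\eta^x$ is legal then so is $\eta^x\to\eta$, because $c_x$ ignores coordinate $x$) makes this legitimate, but it should be stated. Second, monotonicity of the constraint in the infected configuration is what makes ``replay the bootstrap schedule as single flips'' work; it holds for all five models treated here, but the general statement as phrased is about ``a general KCM'', so I would either restrict to monotone constraints or invoke \cite[Corollary 3.7]{HT} as a black box. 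Given that the theorem is explicitly attributed to \cite[Corollary 3.7]{HT}, the cleanest route in the paper is simply to cite it; but the sketch above is how I would reconstruct the argument. The remaining steps — locality of $\bp{\cdot}{}$ on finite windows, finite speed of the bootstrap front, and the elementary reversibility of legal flips — are routine.
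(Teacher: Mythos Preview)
The paper does not prove this theorem; it quotes it as \cite[Corollary 3.7]{HT} and uses it as a black box, which you already noticed. Your reconstruction has the right skeleton---invariance of $\bp{\cdot}{}$ under a single legal flip for one direction, and replaying bootstrap as legal $1\to0$ flips (then reversing) for the other---but two steps are handled too loosely.

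In the forward implication, your assertion that ``$\bp{\cdot}{}$ restricted to a fixed finite window is determined by the configuration on a bounded enlargement of that window'' is false: in the East model, $\bp{\eta}{}_0=0$ iff $\eta$ has an infection somewhere in $(-\infty,0]$, an event with no bounded support. What \emph{is} true is one-sided---the event $\{\bp{\eta}{}_y=0\}$ is always witnessed by a finite bootstrap derivation---and this yields only one inclusion of infected sets; the reverse inclusion still needs either a direct symmetry argument for the ergodic-component relation (not immediate from the asymmetric definition given) or a detour through the converse implication. In the converse, the ``concatenation'' is too quick: pushing $\eta$ to agree with $\zeta$ on $\Lambda$ produces some $\tilde\eta$, pushing $\eta'$ produces some $\tilde\eta'$, and these agree on $\Lambda$ but not globally, so the reversed $\eta'$-path cannot be spliced onto the $\eta$-path as written. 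The standard repair is to nest boxes $\Lambda\subset B_1\subset B_2$: first drive $\eta$ to agree with $\zeta$ on all of $B_2$; then observe, using monotonicity of the constraint, that the reversed bootstrap flips from $\eta'$ that settle $\Lambda$ all lie in $B_1$, query only sites in $B_2$, and therefore remain legal when applied to the configuration reached from $\eta$. This is routine once seen, but it is where the actual work in the argument hides.
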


Inside an ergodic component, the stationary measures will be described
by detailed balance. 
\begin{definition}
We say that  $\mu$ satisfies detailed balance if   for any finite set $\L\subset \bbZ^d$ and any $ x$ such that $x+[-\range,\range]^{d}\in\Lambda$
\begin{equation}
c_{x}(\eta_{\Lambda})\pi(\eta_{\Lambda}^{x})\mu(\eta_{\Lambda})=c_{x}(\eta^x_{\Lambda})\pi(\eta_{\Lambda})\mu(\eta_{\Lambda}^{x})=c_{x}(\eta_{\Lambda})\pi(\eta_{\Lambda})\mu(\eta_{\Lambda}^{x}).\label{eq:detailed_balance}
\end{equation}   
\end{definition}

\begin{theorem}
(\cite{Holley}) Assume that $\mu$ is stationary and translation invariant.
Then, detailed balance (\ref{eq:detailed_balance}) holds.
\end{theorem}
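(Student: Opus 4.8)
The plan is to run Holley's \emph{free energy} (entropy-production) argument in the constrained setting. We use throughout that $\pi$ is a nondegenerate product measure, that $\cL$ is self-adjoint in $\mathrm{L}^{2}(\pi)$ (reversibility), and --- as is implicit in this section --- that the constraints are translation covariant, so that $\mu_{t}:=\mu P_{t}$ inherits the translation invariance of $\mu$. For a finite box $\Lambda$ and a site $x$ with $x+[-\range,\range]^{d}\subseteq\Lambda$, write $f_{\Lambda}:=\mathrm{d}(\mu|_{\Lambda})/\mathrm{d}(\pi|_{\Lambda})$ and define the \emph{local dissipation}
\[
j_{x}^{\Lambda}(\mu):=\pi_{\Lambda}\!\big(c_{x}\,\cov_{\pi_{x}}(\log f_{\Lambda},f_{\Lambda})\big)=\mu\!\big(\,c_{x}\,\psi\big(\mu(\eta_{x}=0\mid\eta_{\Lambda\setminus x})\big)\big),\qquad \psi(\rho):=(\rho-q)\log\frac{\rho(1-q)}{(1-\rho)q},
\]
the second equality obtained by conditioning on the coordinates other than $\eta_{x}$. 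Since $u\mapsto\log u$ is increasing, $\rho-q$ and $\log\frac{\rho(1-q)}{(1-\rho)q}$ have the same sign, hence $\psi\ge 0$ with $\psi(\rho)=0$ iff $\rho=q$; a direct computation shows $\psi$ is convex. In particular $j_{x}^{\Lambda}(\mu)\ge 0$, and this non-negativity --- the ``dissipation'' produced by the dynamics --- is what drives the proof.

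\emph{Step one: an entropy-production identity.} Applying stationarity $\mu(\cL g)=0$ (valid for any bounded cylinder $g$) to $g=\log f_{\L_{N}}$ --- the logarithm taken on the support of $\mu|_{\L_{N}}$, which is legitimate since for a stationary $\mu$, with $\mu$-probability one, legal flips of sites in $\L_{N}$ do not take $\eta_{\L_{N}}$ off that support --- and using $\cL g=\sum_{x\in\L_{N}}c_{x}(\pi_{x}g-g)$ together with the identity $\pi_{x}[f(\pi_{x}\log f-\log f)]=-\cov_{\pi_{x}}(\log f,f)$ for $f$ a function of the single coordinate $\eta_{x}$, one gets
\[
0=\mu(\cL\log f_{\L_{N}})=-\sum_{x\in\L_{N-\range}}j_{x}^{\L_{N}}(\mu)+R_{N}(\mu),
\]
where $R_{N}(\mu)$ is a sum of the $O(N^{d-1})$ terms $\mu\big(c_{x}(\pi_{x}\log f_{\L_{N}}-\log f_{\L_{N}})\big)$ coming from sites $x$ within distance $\range$ of $\partial\L_{N}$, whose constraints also probe sites outside $\L_{N}$.

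\emph{Step two: monotonicity in the box, and the conclusion.} For $\Lambda\subseteq\Lambda'$ (both containing $x+[-\range,\range]^{d}$), the quantity $\rho_{\Lambda}:=\mu(\eta_{x}=0\mid\eta_{\Lambda\setminus x})$ is a martingale ($\rho_{\Lambda}$ is the $\mu$-conditional expectation of $\rho_{\Lambda'}$ given $\eta_{\Lambda\setminus x}$), and $c_{x}$ is measurable with respect to $\eta_{\Lambda\setminus x}$; conditional Jensen for the convex $\psi$ therefore gives $j_{x}^{\Lambda}(\mu)\le j_{x}^{\Lambda'}(\mu)$. Combined with translation covariance ($j_{x+y}^{\Lambda+y}(\mu)=j_{x}^{\Lambda}(\mu)$), this yields $j_{x}^{\L_{N}}(\mu)\ge j_{0}^{\Lambda}(\mu)$ for any fixed finite $\Lambda\supseteq[-\range,\range]^{d}$ and any $x$ with $x+\Lambda\subseteq\L_{N}$, of which there are at least $c\,N^{d}$. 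Plugging this into the identity of Step one (all dissipations being $\ge 0$),
\[
c\,N^{d}\,j_{0}^{\Lambda}(\mu)\ \le\ \sum_{x\in\L_{N-\range}}j_{x}^{\L_{N}}(\mu)\ =\ R_{N}(\mu).
\]
\emph{Provided} $R_{N}(\mu)=o(N^{d})$, since $j_{0}^{\Lambda}(\mu)$ does not depend on $N$ we may let $N\to\infty$ and conclude $j_{0}^{\Lambda}(\mu)=0$ for every finite $\Lambda\supseteq[-\range,\range]^{d}$, hence (translation covariance again) $j_{x}^{\Lambda}(\mu)=0$ for all admissible $x,\Lambda$. As $\psi\ge 0$ vanishes only at $q$ and $c_{x}\ge 0$, this forces $\mu(\eta_{x}=0\mid\eta_{\Lambda\setminus x})=q=\pi(\eta_{x}=0\mid\eta_{\Lambda\setminus x})$ on $\{c_{x}>0\}$, i.e.\ $\mu(\eta_{\Lambda})\pi(\eta_{\Lambda}^{x})=\mu(\eta_{\Lambda}^{x})\pi(\eta_{\Lambda})$ whenever $c_{x}(\eta_{\Lambda})>0$; when $c_{x}(\eta_{\Lambda})=0$ this holds trivially (using $c_{x}(\eta_{\Lambda}^{x})=c_{x}(\eta_{\Lambda})$). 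This is exactly \eqref{eq:detailed_balance}.

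The main obstacle is the boundary estimate $R_{N}(\mu)=o(N^{d})$, the genuinely delicate point of Holley's argument. Each boundary term $\mu\big(c_{x}(\pi_{x}\log f_{\L_{N}}-\log f_{\L_{N}})\big)$ is no longer manifestly of one sign (the constraint $c_{x}$ now sees the exterior of $\L_{N}$) and is \emph{not} bounded uniformly in $N$, because $\log f_{\L_{N}}$ --- equivalently the single-spin conditional log-odds of $\mu$ inside $\L_{N}$ --- can be arbitrarily large, so the crude $\mathrm{L}^{1}$ bound only yields $O(N^{2d-1})$; one must exhibit the cancellation making the \emph{sum} over the boundary shell of order $N^{d-1}$. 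Equivalently, this is the statement that the \emph{specific} relative entropy $|\L_{N}|^{-1}\sum_{\sigma}\mu_{t}(\sigma)\log(\mu_{t}(\sigma)/\pi(\sigma))$ is non-increasing along the flow, of which the identity of Step one is the finite-volume precursor. A convenient alternative is to compare $\mu_{t}$ with the genuinely reversible finite-volume dynamics on $\L_{N}$ with a frozen boundary --- for which the $\L_{N}$-relative entropy is exactly non-increasing with dissipation $\sum_{x\in\L_{N}}j_{x}^{\sigma,\L_{N}}$ --- and to absorb the discrepancy using the locality (approximate finite speed of information propagation) of the KCM. Once this estimate is secured, the above closes the proof.
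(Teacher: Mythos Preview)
The paper does not prove this statement itself---it is cited from \cite{Holley} (see \cite[Chapter IV.5]{Liggett05})---so there is no in-paper proof to compare against directly. The closest is the paper's proof of the companion Theorem~\ref{thm:1d_detailed_balance} (one dimension, no translation invariance), which runs the same entropy-production scheme via the quantities $\alpha_\Lambda^\mu$, $\beta_\Lambda^\mu$ and the boundary Lemma~\ref{lem:boundary_entropy_upper_bound}.

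Your outline sets up the Holley--Liggett machinery correctly: your dissipation $j_x^\Lambda(\mu)$ is essentially the paper's $\alpha_\Lambda^\mu(x)$, the convexity/martingale monotonicity in Step~2 is Lemma~\ref{lem:liggett_5_8}(1), and the use of translation invariance to reach $cN^d\,j_0^\Lambda(\mu)\le R_N(\mu)$ is exactly the right move. But what you have written is not a proof, because the one step that matters---$R_N(\mu)=o(N^d)$---is not established. You say so yourself: the final paragraph is a discussion of why this boundary estimate is delicate and of two possible strategies, ending with ``once this estimate is secured, the above closes the proof''. Neither strategy is carried out; the finite-speed-of-propagation comparison you sketch does not by itself control the single-spin conditional log-odds of $\mu$ near the boundary, which is the whole difficulty. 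For KCMs this step is not a formality: since $c_x$ can vanish, the pointwise $O(1)$ bound on each boundary summand available in Holley's original non-degenerate setting fails. The paper's substitute, in its one-dimensional argument, is precisely Lemma~\ref{lem:boundary_entropy_upper_bound}: truncate the logarithm at level $\lambda_0$, pay a residual $O(|\partial\Lambda|)\,\lambda_0 e^{-\lambda_0}$, and tie the remaining $\beta$-term back to $\alpha$ via Lemma~\ref{lem:liggett_5_8}(2). Supplying an argument of that kind---or any argument---for the boundary term is the missing content of your write-up.
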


Actually, translation invariance is not needed in one dimension. We will indeed prove the following result:
\begin{theorem}
\label{thm:1d_detailed_balance}Consider a one dimensional model,
and assume that $\mu$ is stationary. Then detailed balance (\ref{eq:detailed_balance})
holds.
\end{theorem}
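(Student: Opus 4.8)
The plan is to exploit entropy production, the same mechanism alluded to earlier in the excerpt for the characterisation of stationary measures. Suppose $\mu$ is stationary for the one-dimensional KCM. The idea is to run the dynamics restricted to a large finite interval $\L = \{-N,\dots,N\}$ with filled boundary conditions (so that the restricted chain is irreducible on each bootstrap-percolation ergodic component inside $\L$), and to consider the relative entropy or, more conveniently, a suitable $\chi^2$-type functional of the time-$t$ marginal of the $\L$-restricted dynamics started from $\mu|_\L$. Because $\mu$ is stationary for the infinite-volume dynamics and the constraint $c_x$ for $x$ deep inside $\L$ (precisely, with $x+[-\range,\range]\subset\L$) coincides with the finite-volume constraint with filled b.c., the contribution of all the "interior" flips to the time-derivative of this functional at $t=0$ must vanish. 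Concretely, one writes
\begin{equation}
\label{eq:plan-entropy}
\frac{d}{dt}\Big|_{t=0}\sum_{\eta_\L}\Phi\big(\mu_t(\eta_\L)\big) = -\sum_{x:\,x+[-\range,\range]\subset\L}\ \sum_{\eta_\L}c_x(\eta_\L)\,\pi_\L(\eta_\L)\,\Psi\big(\mu(\eta_\L),\mu(\eta_\L^x)\big)\le 0,
\end{equation}
where $\Psi\ge 0$ is the convex "Bregman-type" integrand associated to $\Phi$ (e.g.\ $\Psi(a,b)=(a-b)(\log a-\log b)$ for $\Phi=x\log x$, weighted appropriately by $\pi$). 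Stationarity forces the left side to be $\le 0$ for every $N$; the subtle point is to also get a matching lower bound so that the sum on the right is forced to be exactly zero, which then makes each nonnegative summand vanish and yields \eqref{eq:detailed_balance} for every interior $x$. Letting $N\to\infty$ covers all $x$, giving detailed balance in the stated form.

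The step that needs care, and that replaces the translation-invariance hypothesis used in Holley's theorem, is justifying that the interior entropy production must vanish rather than merely be non-positive. In the translation-invariant case one divides by the volume and uses that the entropy per site is finite and time-invariant; without translation invariance I would instead argue as follows. Fix an interior site $x$ and a reference interval $\L$. For $M>N$, compare the $\L_M$-restricted dynamics to the $\L_N$-restricted one: the $\L_M$ entropy functional is finite, and its time-derivative at $t=0$ is a sum over interior sites of $\L_M$ of nonpositive terms, one of which is exactly the term for our fixed $x$. Stationarity of $\mu$ for the infinite-volume process implies the time-derivative of the $\L_M$ functional, computed using the full generator, is zero; the difference between the full generator and the $\L_M$-restricted generator consists only of flips at sites within distance $\range$ of $\partial\L_M$, whose total contribution is bounded by a constant times $|\partial \L_M|$ uniformly — but since we are in dimension one, $|\partial\L_M|$ is bounded (it is just two endpoints!). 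This is the crucial one-dimensional input: the boundary has \emph{bounded size}, so the boundary error does not grow with $M$, and one can pass to the limit to conclude that the sum of all interior nonpositive terms is zero, hence each is zero.

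There is one genuine technicality to handle: the entropy functional $\sum_{\eta_\L}\mu(\eta_\L)\log\frac{\mu(\eta_\L)}{\pi_\L(\eta_\L)}$ may not be finite or differentiable if $\mu|_\L$ is not absolutely continuous with respect to $\pi_\L$, e.g.\ if $\mu$ charges the fully healthy configuration. To avoid this I would first reduce to the case where $\mu$ gives zero mass to $\underline\bone$ (the $\underline\bone$ component is a fixed point and trivially satisfies \eqref{eq:detailed_balance}), work within a single nontrivial ergodic component where $\pi_\L$-a.c. can be arranged with filled b.c., or alternatively use the bounded functional $\Phi(a)=a^2$ (so $\Psi(a,b)=\tfrac12(a-b)^2$), for which no positivity is needed and all sums are manifestly finite; the algebra of \eqref{eq:plan-entropy} is identical and the conclusion — that $c_x(\eta_\L)\pi_\L(\eta_\L^x)\mu(\eta_\L)$ is symmetric under $\eta_\L\leftrightarrow\eta_\L^x$ — is the same, which is precisely \eqref{eq:detailed_balance} once one recalls $c_x(\eta_\L)=c_x(\eta_\L^x)$ since $c_x$ does not depend on the state of $x$. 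The main obstacle is thus not any single computation but setting up the finite-volume comparison cleanly enough that the bounded one-dimensional boundary genuinely kills the error term; everything else is the standard entropy-production manipulation.
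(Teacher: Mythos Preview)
Your overall strategy---entropy production with a bulk/boundary decomposition, using that in one dimension the boundary has bounded cardinality---is exactly the paper's approach. But there are two genuine gaps in your sketch, and they are precisely where the real work lies.

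First, your assertion that the boundary contribution is ``bounded by a constant times $|\partial\Lambda_M|$ uniformly'' is not justified. The boundary term $h_\Lambda^\partial(\mu)$ has the form $\sum_{x\text{ bdy}}\sum_\zeta(\Gamma^\mu(x,\zeta)-\Gamma^\mu(x,\zeta^x))\log\frac{\mu(\zeta)}{\pi(\zeta)}$, and the factor $\log\frac{\mu(\zeta)}{\pi(\zeta)}$ can be of order $|\Lambda|$, so even with $O(1)$ boundary sites the naive bound is $O(|\Lambda|)$, not $O(1)$. In Liggett's original argument this is cured using that the rates are bounded away from zero; for KCMs they are not (constraints can vanish), and this is exactly where the paper introduces a new ingredient: a truncation lemma that splits according to whether $|\lambda(x,\zeta)|:=|\log\frac{\mu(\zeta)\pi(\zeta^x)}{\mu(\zeta^x)\pi(\zeta)}|$ exceeds a threshold $\lambda_0$, yielding
\[
h_\Lambda^\partial(\mu)\le \tfrac{\lambda_0}{2}\sum_{x\text{ bdy}}\beta_\Lambda^\mu(x)+\tfrac{p^2}{q}\cdot 2\range\cdot \lambda_0 e^{-\lambda_0}.
\]
Your $\Phi(a)=a^2$ alternative does not escape this: the analogous boundary term carries a factor $\mu(\zeta)/\pi(\zeta)$ which is just as unbounded.

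Second, even granting a uniform bound $h_\Lambda^\partial\le C$, your inference ``boundary error does not grow, hence the sum of interior nonpositive terms is zero'' is a non sequitur. A uniform bound only gives that $\sum_{x\text{ interior}}\alpha_\Lambda^\mu(x)\le 2C$ for all $\Lambda$; this is consistent with each $\alpha$ being a fixed positive number. What the paper actually does is a bootstrap: from boundedness one deduces (via monotonicity of $\alpha_\Lambda^\mu(x)$ in $\Lambda$) that the boundary-layer $\alpha$'s, and hence the $\beta$'s, tend to zero; feeding this back into the truncation lemma and then sending $\lambda_0\to\infty$ gives $h_\Lambda^\partial\to 0$. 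Since $h_\Lambda^\partial=\tfrac12\sum\alpha_\Lambda^\mu(x)$ is nonnegative and nondecreasing in $\Lambda$, this forces it to vanish identically, and only then do all $\alpha$'s vanish. The one-dimensional input enters in the second error term of the truncation lemma, which is proportional to $|\partial\Lambda|$ and hence stays bounded as $\Lambda\nearrow\bbZ$; in $d\ge 2$ it grows linearly in the sidelength and the bootstrap breaks.
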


We combine these two ingredients in two different mechanisms: when
the bootstrap percolation spreads out from within a box, and when
it invades a box from the outside.
\begin{definition}
Fix a configuration $\eta$, a stable configuration $\beta\in\mathcal{E}$,
and a finite box $\Lambda$. We say that $\eta$ \emph{$\beta$-spans
$\Lambda$ internally }if $\bp{\eta}{\Lambda}_{\Lambda}=\beta_{\Lambda}$,
i.e., if $\bp{\eta}{\Lambda}$ agrees with $\beta$ on $\Lambda$.
\end{definition}

\begin{definition}
Fix a configuration $\eta$, a stable configuration $\beta\in\mathcal{E}$,
and two finite boxes $\Lambda\subset\Lambda'$. We say that $\eta$
\emph{$\beta$-spans $\Lambda$ externally} from $\Lambda'$ if $\bp{\eta_{\L'\setminus \L}\cdot \mathbf 1_\L}{\Lambda'}_{\Lambda}=\beta_{\Lambda}$.
We denote the
set of configurations that $\beta$-span $\Lambda$ externally from
$\Lambda'$ by $E_{\beta}(\Lambda',\Lambda)$.
\end{definition}
Under the assumption of detailed balance \eqref{eq:detailed_balance}, we can exploit the above mechanisms to try identifying the stationary measure.
For the internal spanning one, we have:
\begin{lemma}
\label{lem:internally_spanned}Let $\mu,\mu'$ be two measures
satisfying detailed balance (\ref{eq:detailed_balance}), such
that $\mu'$ is a product measure. Assume that for some $\beta\in\mathcal{E}$,
both measures are supported on the ergodic component associated with
$\beta,$ and
that  
\begin{equation}
\lim_{N\to \infty}\mu\left(\bp{\eta}{\Lambda_{N}}=\beta_{\Lambda_{N}}\right)=\lim_{N\to \infty}\mu'\left(\bp{\eta}{\Lambda_{N}}=\beta_{\Lambda_{N}}\right)=1.
\end{equation}
Then $\mu=\mu'$.
\end{lemma}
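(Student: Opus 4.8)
The plan is to compare the two measures on a large box $\Lambda_N$ by showing they satisfy the same finite-volume detailed-balance relations on the ergodic component of $\beta$, and then to let $N\to\infty$. First I would fix a local function $f$, supported on some finite box $\Lambda_0$, and pick $N$ large. The key structural fact is that, on the event $\{\bp{\eta}{\Lambda_N}=\beta_{\Lambda_N}\}$, the configuration $\eta_{\Lambda_N}$ (with filled boundary conditions) lies in the ergodic component of the finite-volume chain on $\Lambda_N$ associated with $\beta$; by the characterization of ergodic components quoted before the lemma (the $\bp{\cdot}{}$-criterion), any two configurations in this component are connected by a chain of legal flips. Detailed balance \eqref{eq:detailed_balance}, which both $\mu$ and $\mu'$ are assumed to satisfy, then determines the conditional law of $\mu$ (resp.\ $\mu'$) on this component, up to the single normalizing constant: writing $\eta\leftrightarrow\eta'$ for a legal single flip at a site $x$ with $x+[-\range,\range]^d\subset\Lambda_N$, equation \eqref{eq:detailed_balance} gives $\mu(\eta_{\Lambda_N})/\mu(\eta'_{\Lambda_N}) = \pi(\eta'_{\Lambda_N})/\pi(\eta_{\Lambda_N})$ whenever $c_x(\eta_{\Lambda_N})>0$, and iterating along a connecting path shows that, \emph{inside the component}, $\mu$ is proportional to $\pi$. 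The same holds for $\mu'$ with the same proportionality to $\pi$, so $\mu$ and $\mu'$ agree after conditioning on $\{\bp{\eta}{\Lambda_N}=\beta_{\Lambda_N}\}$.

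Second, I would turn this conditional-equality statement into an unconditional one. Both hypotheses tell us $\mu(\bp{\eta}{\Lambda_N}=\beta_{\Lambda_N})\to1$ and $\mu'(\bp{\eta}{\Lambda_N}=\beta_{\Lambda_N})\to1$. So for a fixed bounded local $f$,
\[
\mu(f) = \mu\bigl(f \,\big|\, \bp{\eta}{\Lambda_N}=\beta_{\Lambda_N}\bigr)\,\mu\bigl(\bp{\eta}{\Lambda_N}=\beta_{\Lambda_N}\bigr) + O\!\left(\|f\|_\infty\,\mu\bigl(\bp{\eta}{\Lambda_N}\neq\beta_{\Lambda_N}\bigr)\right),
\]
and likewise for $\mu'$; the error terms vanish as $N\to\infty$. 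By Step 1 the two conditional expectations coincide (for $N$ large enough that $\Lambda_0$ together with its $\range$-neighbourhood sits well inside $\Lambda_N$), hence $\mu(f)=\mu'(f)$. Since $f$ was an arbitrary bounded local function, $\mu=\mu'$.

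There is one subtlety that deserves care, and it is where I expect the real work to be. In Step 1 the iteration "$\mu\propto\pi$ along a connecting path" only directly uses \eqref{eq:detailed_balance} at sites $x$ whose $\range$-neighbourhood is contained in $\Lambda_N$; a legal flip of the finite-volume chain at a site near $\partial\Lambda_N$ (which queries the filled boundary condition) is \emph{not} covered by \eqref{eq:detailed_balance}. One must therefore argue that configurations in the $\beta$-component can be joined using only "interior" flips, or, more robustly, that the conditional law of $\mu$ given $\{\bp{\eta}{\Lambda_N}=\beta_{\Lambda_N}\}$, restricted to a fixed inner box $\Lambda_0$, is insensitive to the boundary-adjacent transitions — e.g.\ by comparing with a slightly smaller box $\Lambda_{N'}$, $N'<N$, where the boundary moves further away from $\Lambda_0$, and using the hypothesis that $\bp{\eta}{\Lambda_{N'}}=\beta_{\Lambda_{N'}}$ with probability tending to $1$ as well. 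The product structure of $\mu'$ enters precisely here: it guarantees $\mu'$ conditioned on the spanning event is exactly (the normalization of) $\pi$ restricted to the component, with no boundary artefacts, so it suffices to identify $\mu$ with that same object. Handling this boundary issue cleanly, rather than the formal detailed-balance bookkeeping, is the main obstacle.
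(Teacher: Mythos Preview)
Your overall plan --- use detailed balance to show that $\mu/\mu'$ is constant on the $\beta$-component in a large box, then let the box grow --- is exactly the paper's strategy, and you have correctly identified the one genuine obstacle: detailed balance \eqref{eq:detailed_balance} applies only at sites $x$ with $x+[-\range,\range]^d\subset\Lambda$, so boundary flips are not covered. However, your proposed fixes do not close the gap. Option (a), connecting configurations in the component using only interior flips, is simply false already for FA-1f on $\Lambda_N=[-N,N]$: the configuration with a single vacancy at $N$ and the one with a single vacancy at $-N$ both $\mathbf 0$-span $\Lambda_N$, yet neither boundary vacancy can be removed without a flip at a boundary site. Option (b), ``compare with a smaller box'', is too vague to carry the argument. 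And the role you assign to the product structure of $\mu'$ --- that it pins down the conditional law of $\mu'$ on the spanning event as normalised $\pi$ --- is not how the hypothesis is actually used; even granting that claim, the problem of identifying the conditional law of $\mu$ (which is not assumed product) remains untouched.

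The paper's resolution is a concrete use of the product hypothesis that you did not hit on. One works on the enlarged box $\Lambda_L\cup\partial\Lambda_L$, writing a configuration there as $\psi\zeta$ with $\zeta\in\Omega_{\Lambda_L}$ and $\psi\in\Omega_{\partial\Lambda_L}$. For every $x\in\Lambda_L$ one now has $x+[-\range,\range]^d\subset\Lambda_L\cup\partial\Lambda_L$, so detailed balance applies to \emph{all} flips inside $\Lambda_L$ once $\psi$ is held fixed, yielding $\mu(\psi\zeta)=Z(\psi,\beta,L)\,\mu'(\psi\zeta)$ for every $\zeta$ with $\bp{\zeta}{\Lambda_L}=\beta_{\Lambda_L}$. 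The product structure of $\mu'$ is then used to \emph{marginalise} $\psi$ away: since $\mu'(\psi\zeta)=\mu'(\psi)\mu'(\zeta)$, summing over $\psi$ gives $\mu(\zeta)=Z(\beta,L)\,\mu'(\zeta)$ with a single, $\psi$-independent constant. From there the spanning hypothesis forces $Z(\beta,L)\to 1$ and kills the remainder as $L\to\infty$. Thus the product assumption enters not to identify a conditional law, but to collapse a family of boundary-dependent proportionality constants into one.
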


As an immediate corollary, in the FA-1f  model, if we write a stationary
measure $\mu$ as the sum $\mu=\mu(\bp{\eta}{}=\mathbf{0})\mu\left(\cdot|\bp{\eta}{}=\mathbf{0}\right)+\mu(\bp{\eta}{}=\mathbf{1})\mu\left(\cdot|\bp{\eta}{}=\mathbf{1}\right)$,
we obtain:
\begin{corollary}
\label{cor:stationary measures}
Let $\mu$ be a stationary measure for the one-dimensional FA-1f . Then
$\mu=\lambda\pi+(1-\lambda)\delta_{\mathbf{1}}$ for some $\lambda\in[0,1]$. The same holds for the BABP and for the $\d$-West process,  $\delta\in (0,1]$.
\end{corollary}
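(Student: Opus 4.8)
The plan is to derive Corollary~\ref{cor:stationary measures} from Lemma~\ref{lem:internally_spanned} applied with $\mu'=\pi$. Let $\mu$ be a stationary measure for the one-dimensional FA-1f model. By Theorem~\ref{thm:1d_detailed_balance}, $\mu$ satisfies detailed balance \eqref{eq:detailed_balance}; the product measure $\pi$ trivially satisfies it as well. The first step is to identify the relevant ergodic components. For FA-1f the bootstrap percolation empties any site with an empty neighbour, so starting from $\eta$ it empties every connected component of $\{0,1\}^{\bbZ}$ that contains at least one infection; hence $\bp{\eta}{}=\underline{\mathbf 0}$ whenever $\eta$ has at least one infection, and $\bp{\eta}{}=\underline{\mathbf 1}$ only when $\eta=\underline{\mathbf 1}$. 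Consequently there are exactly two ergodic components: $\{\underline{\mathbf 1}\}$ and $\Omega\setminus\{\underline{\mathbf 1}\}$. Decompose $\mu=\mu(\bp{\eta}{}=\mathbf 0)\,\mu(\cdot\mid\bp{\eta}{}=\mathbf 0)+\mu(\bp{\eta}{}=\mathbf 1)\,\mu(\cdot\mid\bp{\eta}{}=\mathbf 1)$ and set $\lambda:=\mu(\bp{\eta}{}=\mathbf 0)$. The second summand is forced to be $\delta_{\underline{\mathbf 1}}$ since the component $\{\underline{\mathbf 1}\}$ is a singleton, and one checks it trivially satisfies detailed balance, so it remains to show the conditional measure $\mu(\cdot\mid\bp{\eta}{}=\mathbf 0)$ equals $\pi$.

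For this I apply Lemma~\ref{lem:internally_spanned} with $\beta=\underline{\mathbf 0}$, $\mu$ replaced by $\mu(\cdot\mid\bp{\eta}{}=\mathbf 0)$, and $\mu'=\pi$. Both measures are supported on the ergodic component associated with $\underline{\mathbf 0}$ (for $\pi$ this is because $\pi$-a.s.\ there is an infection, so $\bp{\eta}{}=\mathbf 0$ almost surely). The detailed balance hypothesis transfers to the conditional measure because conditioning on an ergodic component preserves the detailed balance relations \eqref{eq:detailed_balance} (the relations only ever connect configurations in the same component). The remaining hypothesis to verify is the internal-spanning condition: $\lim_{N\to\infty}\mu(\,\bp{\eta}{\Lambda_N}=\mathbf 0\ \text{on}\ \Lambda_N \mid \bp{\eta}{}=\mathbf 0)=1$ and the same with $\pi$. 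For FA-1f with filled boundary conditions on $\Lambda_N=\{-N,\dots,N\}$, the finite-volume bootstrap percolation empties all of $\Lambda_N$ as soon as $\Lambda_N$ contains at least one infection; so $\{\bp{\eta}{\Lambda_N}=\mathbf 0\ \text{on}\ \Lambda_N\}=\{\eta_{\Lambda_N}\neq\underline{\mathbf 1}\}$. Under $\pi$ this probability is $1-p^{2N+1}\to 1$. Under $\mu(\cdot\mid\bp{\eta}{}=\mathbf 0)$, on the event $\{\bp{\eta}{}=\mathbf 0\}$ there is (a.s.) some infection, and for $N$ large that infection lies in $\Lambda_N$; by dominated convergence the probability that $\Lambda_N$ is infected tends to $1$. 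Lemma~\ref{lem:internally_spanned} then yields $\mu(\cdot\mid\bp{\eta}{}=\mathbf 0)=\pi$, and hence $\mu=\lambda\pi+(1-\lambda)\delta_{\underline{\mathbf 1}}$.

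For the BABP and the $\delta$-West process with $\delta\in(0,1]$, the argument is identical once one observes that their bootstrap percolation maps coincide with that of FA-1f: in all three cases a site becomes empty under the bootstrap dynamics precisely when it has an empty neighbour (for $\delta$-West with $\delta>0$, having an empty neighbour on either side makes $c_x>0$; for BABP likewise), so again $\bp{\eta}{}=\mathbf 0$ iff $\eta\neq\underline{\mathbf 1}$, the ergodic components are the same, and the finite-volume spanning event with filled boundary conditions is again $\{\eta_{\Lambda_N}\neq\underline{\mathbf 1}\}$. Stationarity gives detailed balance via Theorem~\ref{thm:1d_detailed_balance}, and the rest goes through verbatim. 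The main obstacle, and the only place requiring care, is checking that passing to the conditional measure on an ergodic component is legitimate in the hypotheses of Lemma~\ref{lem:internally_spanned} — i.e.\ that detailed balance and the support condition survive the conditioning — together with the (routine) dominated-convergence argument that an a.s.-present infection is eventually captured by $\Lambda_N$.
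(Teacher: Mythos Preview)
Your proposal is correct and follows essentially the same approach as the paper: decompose $\mu$ according to the two possible values of $\bp{\eta}{}$ (which for FA-1f, BABP, and $\delta$-West are exactly $\underline{\mathbf 0}$ and $\underline{\mathbf 1}$), and apply Lemma~\ref{lem:internally_spanned} with $\beta=\underline{\mathbf 0}$ and $\mu'=\pi$ to the nontrivial component. You have spelled out the verifications (detailed balance via Theorem~\ref{thm:1d_detailed_balance}, the internal-spanning condition via $\{\bp{\eta}{\Lambda_N}=\underline{\mathbf 0}_{\Lambda_N}\}=\{\eta_{\Lambda_N}\neq\underline{\mathbf 1}\}$) that the paper leaves implicit when it calls the corollary ``immediate''.
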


Another consequence of Lemma \ref{lem:internally_spanned} is the
uniqueness of the invariant measure under certain hypotheses; we note
here, that in particular these are satisfied by the FA2f model (see \cite[Chapter 3]{HT}).
\begin{theorem}
\label{thm:emptiable_stationary_measure}Consider a KCM such that $\pi(\bp{\eta}{\Lambda_{N}}=\mathbf{0}_{\L_N})\xrightarrow{N\to\infty}1$.
Let $\mu$ be a measure satisfying detailed balance (\ref{eq:detailed_balance}),
and assume $\bp{\eta}{}=\mathbf{0}$ $\mu$-a.s.. Then $\mu=\pi$.
\end{theorem}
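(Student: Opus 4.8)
The plan is to deduce this as a direct application of Lemma \ref{lem:internally_spanned} with the choices $\mu' = \pi$ and $\beta = \mathbf{0}$. We must verify the three hypotheses of that lemma. First, $\mu'=\pi$ is indeed a product measure, and $\pi$ satisfies detailed balance \eqref{eq:detailed_balance} — this is exactly the observation made at the start of Section \ref{sec:stationary measures}, since the constraint $c_x$ does not depend on $\eta_x$, so the two sides of \eqref{eq:detailed_balance} agree after cancelling the common factor $c_x(\eta_\Lambda)$ and using $\pi(\eta^x_\Lambda)\pi(\eta_\Lambda) = \pi(\eta_\Lambda)\pi(\eta^x_\Lambda)$. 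The measure $\mu$ satisfies detailed balance by hypothesis. Second, we need both measures to be supported on the ergodic component associated with $\beta=\mathbf{0}$: for $\mu$ this is precisely the assumption $\bp{\eta}{}=\mathbf{0}$ $\mu$-a.s.; for $\pi$ we need $\bp{\eta}{}=\mathbf{0}$ $\pi$-a.s., which follows from the hypothesis $\pi(\bp{\eta}{\Lambda_N}=\mathbf{0}_{\L_N})\to 1$ (the finite-volume emptying events are, up to boundary effects, increasing in $N$ after intersecting, and their limit forces $\bp{\eta}{}=\mathbf{0}$ almost surely — one passes from filled boundary conditions on $\L_N$ to free evolution on $\Z^d$ by monotonicity of bootstrap percolation in the boundary condition).

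Third, and this is the only step requiring a small argument, we must check the convergence
\begin{equation}
\lim_{N\to\infty}\mu\big(\bp{\eta}{\Lambda_N}=\mathbf{0}_{\L_N}\big)=\lim_{N\to\infty}\pi\big(\bp{\eta}{\Lambda_N}=\mathbf{0}_{\L_N}\big)=1.
\end{equation}
The second equality is the hypothesis. For the first, observe that $\{\bp{\eta}{\Lambda_N}=\mathbf{0}_{\L_N}\}\supseteq\{\bp{\eta}{}=\mathbf{0}\text{ and the emptying of }\L_N\text{ can be realized using only sites in }\L_N\}$; more simply, since bootstrap percolation with filled boundary condition on $\L_N$ dominates (produces fewer empty sites than) the unrestricted bootstrap percolation on $\Z^d$ viewed on $\L_N$ only in one direction, one instead argues as follows: if $\bp{\eta}{}=\mathbf{0}$, then for every finite $\Lambda$ the origin — indeed every site of $\Lambda$ — is emptied by the unrestricted dynamics in finitely many steps, hence using only finitely many sites; choosing $N=N(\eta,\Lambda)$ large enough that all sites used lie in $\L_N$, we get $\bp{\eta}{\Lambda_N}$ empty on $\L_N$ (filled boundary conditions outside $\L_N$ only help in the sense that they are the worst case we are already controlling — here one must be slightly careful and note that emptying within a large box with filled boundary is implied by emptying in $\Z^d$ restricted to that box when the box is large enough to contain the whole emptying history). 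Thus $\mathbf{1}\{\bp{\eta}{\Lambda_N}=\mathbf{0}_{\L_N}\}\uparrow \mathbf{1}\{\bp{\eta}{}=\mathbf{0}\}$ pointwise as $N\to\infty$, and since $\bp{\eta}{}=\mathbf{0}$ holds $\mu$-a.s., dominated convergence gives $\mu(\bp{\eta}{\Lambda_N}=\mathbf{0}_{\L_N})\to 1$.

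With all three hypotheses of Lemma \ref{lem:internally_spanned} verified, we conclude $\mu=\pi$, as desired. The main obstacle — really the only place care is needed — is the monotonicity bookkeeping in the previous paragraph: relating the finite-volume bootstrap percolation $\bp{\eta}{\Lambda_N}$ with filled boundary condition to the infinite-volume $\bp{\eta}{}$, and checking that "emptied in $\Z^d$" implies "emptied in $\L_N$ with filled boundary" once $N$ is large enough to contain the (finite) set of sites whose updates were used. This is a standard finite-speed-of-propagation argument for monotone cellular automata, but it is the step a referee would want spelled out, together with the symmetric passage for $\pi$ used to establish $\bp{\eta}{}=\mathbf 0$ $\pi$-a.s.
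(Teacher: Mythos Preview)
Your overall plan---apply Lemma \ref{lem:internally_spanned} with $\mu'=\pi$ and $\beta=\mathbf{0}$---is exactly the paper's, and the reductions for $\pi$ are fine. The gap is in your verification that
\[
\mu\big(\bp{\eta}{\Lambda_N}=\mathbf{0}_{\Lambda_N}\big)\xrightarrow{N\to\infty}1.
\]
You claim $\mathbf{1}\{\bp{\eta}{\Lambda_N}=\mathbf{0}_{\Lambda_N}\}\uparrow \mathbf{1}\{\bp{\eta}{}=\mathbf{0}\}$ pointwise, arguing that if $\bp{\eta}{}=\mathbf{0}$ then finitely many sites suffice to empty any fixed $\Lambda$, so for $N$ large enough $\bp{\eta}{\Lambda_N}$ vanishes ``on $\Lambda_N$''. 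But your argument only yields $\bp{\eta}{\Lambda_N}_{\Lambda}=\mathbf{0}_\Lambda$ for a \emph{fixed} finite $\Lambda$ once $N\ge N(\eta,\Lambda)$; it does \emph{not} give $\bp{\eta}{\Lambda_N}=\mathbf{0}_{\Lambda_N}$, since the target set $\Lambda_N$ grows with $N$ and sites near its boundary may require information from outside $\Lambda_N$. The monotonicity is simply false. A concrete counterexample for FA2f on $\Z^2$: let $\eta$ be empty exactly on the $x$-axis $\{(i,0):i\in\Z\}$ together with one site per nonzero row, namely $(j^2,j)$ for each $j\neq 0$. One checks easily that $\bp{\eta}{}=\mathbf{0}$ (row $0$ is empty; row $1$ then empties from the seed $(1,1)$ and the row below; inductively every row empties). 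Yet for any large $N$ the rows $j$ with $\sqrt{N}<j\le N$ contain no empty site of $\eta$ inside $\Lambda_N$, so two adjacent rows of $\Lambda_N$ are fully occupied and can never be emptied under FA2f with filled boundary; hence $\bp{\eta}{\Lambda_N}\neq \mathbf{0}_{\Lambda_N}$ for all large $N$.

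This is precisely the difficulty the paper's proof addresses, and it genuinely uses the extra hypotheses. The paper first exploits detailed balance to show $\mu(\cdot\,|\,\bp{\eta}{\Lambda_L}=\beta_L)=\pi(\cdot\,|\,\bp{\eta}{\Lambda_L}=\beta_L)$, so that $\mu(E)=\sum_{\beta_L}\pi(E\,|\,\bp{\eta}{\Lambda_L}=\beta_L)\,\mu(\bp{\eta}{\Lambda_L}=\beta_L)$ for events $E$ on $\Lambda_L$. It then uses an FKG argument under $\pi$ to show that, conditionally on any $\beta_L$ vanishing on $\Lambda_N$, the event $\{\bp{\eta}{\Lambda_N}=\mathbf{0}_{\Lambda_N}\}$ still has $\pi$-probability $1-o_N(1)$. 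Only after this can one send $L\to\infty$ (which is where your correct observation $\mu(\bp{\eta}{\Lambda_L}_{\Lambda_N}=\mathbf{0}_{\Lambda_N})\to 1$ is used) and then $N\to\infty$. In short, passing from ``$\mu$ is supported on $\{\bp{\eta}{}=\mathbf{0}\}$'' to ``$\mu$ gives high probability to internal spanning of $\Lambda_N$'' is the heart of the matter, and it requires both the detailed-balance decomposition and the hypothesis on $\pi$ via FKG---neither of which your argument invokes.
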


The next lemma helps describe the stationary reversible measures for external
spanning mechanism:
\begin{lemma}
\label{lem:externally_spanned}Let $\mu,\mu'$ be two measures
satisfying detailed balance (\ref{eq:detailed_balance}), such that
that $\mu'$ is a product measure. Assume that, for some $\beta\in\mathcal{E}$ and any $N$,
\begin{equation}
\lim_{L\to \infty}\mu\left(E_{\beta}(\Lambda_{L},\Lambda_{N})\right)=\lim_{L\to \infty}\mu'\left(E_{\beta}(\Lambda_{L},\Lambda_{N})\right) = 1.
\end{equation}
Then $\mu=\mu'$.
\end{lemma}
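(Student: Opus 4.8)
The plan is to mirror the proof of Lemma~\ref{lem:internally_spanned}, replacing the internal-spanning event by the external-spanning one, and to exploit the fact that detailed balance propagates equality of the two measures across any region that gets emptied by bootstrap percolation. The key observation is that on the event $E_\beta(\Lambda_L,\Lambda_N)$ one can, by performing legal flips inside $\Lambda_L$, drive the configuration on $\Lambda_N$ to $\beta_{\Lambda_N}$ while keeping the boundary condition on $\Lambda_L^c$ (together with a wall of filled sites on $\partial\Lambda_L$) fixed; detailed balance \eqref{eq:detailed_balance} then forces the conditional law of $\eta_{\Lambda_N}$ given $\eta_{\Lambda_L\setminus\Lambda_N}$ to be the Gibbs-type weight determined by $\pi$ along this chain of flips, which does not depend on whether we started from $\mu$ or from $\mu'$. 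Concretely, I would first record the elementary fact that if $\mu$ satisfies \eqref{eq:detailed_balance} and $\eta,\eta'$ are connected by a legal path inside a finite $\Lambda$ with fixed exterior, then $\mu(\eta_\Lambda)/\mu(\eta'_\Lambda) = \pi(\eta_\Lambda)/\pi(\eta'_\Lambda)$ — this is the standard ``detailed balance determines the measure on an irreducible component'' computation, and it is where the hypothesis that the relevant $x+[-\range,\range]^d$ lie inside $\Lambda$ is used.

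Next I would set up the comparison on a fixed box $\Lambda_N$. For $L$ large, decompose $\mu = \sum_{\zeta} \mu(\eta_{\Lambda_L\setminus\Lambda_N}=\zeta)\,\mu(\,\cdot\mid \eta_{\Lambda_L\setminus\Lambda_N}=\zeta)$ and similarly for $\mu'$. On the event $E_\beta(\Lambda_L,\Lambda_N)$, the annulus configuration $\zeta$ is such that bootstrap percolation in $\Lambda_L$ with filled boundary, started from $\zeta\cdot\mathbf 1_{\Lambda_N}$, empties enough of $\Lambda_N$ to reach $\beta_{\Lambda_N}$; in particular, for such $\zeta$, the configuration $\eta_{\Lambda_N}\cdot\zeta$ is legally connected (inside $\Lambda_L$, exterior filled) to $\beta_{\Lambda_N}\cdot\zeta$ for \emph{every} choice of $\eta_{\Lambda_N}$ in the ergodic component. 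By the elementary fact above, the conditional law of $\eta_{\Lambda_N}$ given $\zeta$ is then proportional to $\pi(\eta_{\Lambda_N})$ restricted to that component, and this common formula is shared by $\mu$ and $\mu'$. Hence $\mu$ and $\mu'$ agree when both are conditioned on $E_\beta(\Lambda_L,\Lambda_N)$ and then further on $\zeta$; averaging over $\zeta$, they agree conditioned on $E_\beta(\Lambda_L,\Lambda_N)$. Since $\mu'$ is a product measure supported on the $\beta$-component, its conditional law on $\Lambda_N$ given $E_\beta(\Lambda_L,\Lambda_N)$ is easy to identify explicitly, but all we need is the agreement.

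Finally I would take $L\to\infty$: by hypothesis $\mu(E_\beta(\Lambda_L,\Lambda_N))\to 1$ and $\mu'(E_\beta(\Lambda_L,\Lambda_N))\to 1$, so for any local function $f$ supported in $\Lambda_N$,
\begin{equation}
\mu(f) = \lim_{L\to\infty}\mu\big(f\mid E_\beta(\Lambda_L,\Lambda_N)\big) = \lim_{L\to\infty}\mu'\big(f\mid E_\beta(\Lambda_L,\Lambda_N)\big) = \mu'(f),
\end{equation}
using that $|\mu(f) - \mu(f\mid E)| \le 2\|f\|_\infty\,\mu(E^c)\to 0$. Since $N$ and $f$ were arbitrary and local functions determine the measure, $\mu=\mu'$. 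The main obstacle, and the step that needs to be carried out carefully, is the claim that on $E_\beta(\Lambda_L,\Lambda_N)$ the conditional law of $\eta_{\Lambda_N}$ given the annulus is \emph{exactly} the $\pi$-proportional weight on the ergodic component — one must check that the bootstrap trajectory witnessing external spanning can be realized as an honest sequence of legal KCM flips (bootstrap percolation only ever empties sites, but legality of each flip, in either direction, is what detailed balance requires), and that the filled wall on $\partial\Lambda_L$ does not obstruct reaching $\beta_{\Lambda_N}$ from an arbitrary starting configuration in the component. This is exactly the external analogue of the argument behind Lemma~\ref{lem:internally_spanned}, so I expect it to go through with the same bootstrap-percolation bookkeeping, the only genuinely new input being the role of the artificial filled boundary on $\Lambda_L$.
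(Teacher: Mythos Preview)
Your overall strategy matches the paper's: condition on the annulus configuration $\zeta=\eta_{\Lambda_L\setminus\Lambda_N}$, use detailed balance to pin down the law on $\Lambda_N$, then send $L\to\infty$. However, the step ``averaging over~$\zeta$, they agree conditioned on $E_\beta(\Lambda_L,\Lambda_N)$'' is a genuine gap. From $\mu(\,\cdot\mid\zeta)=\mu'(\,\cdot\mid\zeta)$ on $\Lambda_N$ for each $\zeta\in E_\beta$ you \emph{cannot} conclude $\mu(\,\cdot\mid E_\beta)=\mu'(\,\cdot\mid E_\beta)$: the mixing weights $\mu(\zeta\mid E_\beta)$ and $\mu'(\zeta\mid E_\beta)$ have no reason to coincide, and you never address this. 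If the common conditional law varied with~$\zeta$, the two averages would differ.

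The paper closes this gap by using the product structure of $\mu'$ one step earlier rather than as an aside. Since $\mu'(\zeta\eta)=\mu'(\zeta)\mu'(\eta)$, the constancy of $\mu(\zeta\eta)/\mu'(\zeta\eta)$ over $\eta\ge\beta_{\Lambda_N}$ (for fixed $\zeta\in E_\beta$) gives directly $\mu(\zeta\eta)=\mu(\zeta)\,\mu'(\eta)$; in other words, the $\mu$-conditional law of $\eta_{\Lambda_N}$ given~$\zeta$ is the \emph{unconditional} marginal $\mu'_{\Lambda_N}$, hence $\zeta$-independent. Summing over $\zeta\in E_\beta$ then yields
\[
\sum_{\zeta\in E_\beta}\mu(\zeta\eta)=\mu(E_\beta)\,\mu'(\eta),
\]
and $\mu(E_\beta)\to 1$ together with $\sum_{\zeta\notin E_\beta}\mu(\zeta\eta)\le\mu(E_\beta^c)\to 0$ finishes. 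Your argument is easily repaired by inserting exactly this observation (the product hypothesis on $\mu'$ is precisely what makes the conditional law $\zeta$-free), but as written the averaging step does not follow.
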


One application of this lemma is the identification of all stationary
measures of the one-dimensional East model. In this case the set $\cE$ is countable and it consists of $\mathbf 0,\mathbf 1,$ and all configurations $\beta$ for which there exists $i\in \bbZ$ such that $\beta$ is infected in $\{i,i+1,\dots\}$ and healthy elsewhere. 
\begin{theorem}
\label{thm:east_stationary_measures}Let $\mu$ be a stationary measure
for the one-dimensional East model. Then 
\begin{equation}
\mu=\sum_{i\in\Z\cup\{-\infty,\infty\}}\lambda_{i}\mu^{(i)},
\end{equation}
for some positive sequence $\left(\lambda_{i}\right)_{i\in\Z\cup\{\pm\infty\}}$,
where $\mu^{(-\infty)}=\pi, \mu^{(\infty)}=\d_{\mathbf 1},$ and for any $i\in \Z$ the measure $\mu^{(i)}$ is a product measure with
marginals
\begin{equation}
\label{eq:mui}
\mu^{(i)}(\eta_{x}=1)=\begin{cases}
1 & \text{if }x<i,\\
0 & \text{if }x=i,\\
p & \text{if }x>i.
\end{cases}
\end{equation}
\end{theorem}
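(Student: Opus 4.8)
The plan is to follow the two-step scheme behind \Cref{cor:stationary measures}: decompose a stationary measure along the ergodic components of the East bootstrap percolation, and then pin down its restriction to each component using the spanning Lemmas~\ref{lem:internally_spanned} and~\ref{lem:externally_spanned}.

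First I would record the (elementary) structure of the East bootstrap percolation: since a $0$ propagates to the right along consecutive sites while a block of $1$'s adjacent to a filled region is never broken, for $\eta\neq\mathbf 1$ one gets $\bp{\eta}{}=\mathbf 0$ when the infections of $\eta$ are unbounded to the left, while $\bp{\eta}{}=\beta^{(i)}$ when the leftmost infection of $\eta$ sits at $i\in\Z$, $\beta^{(i)}$ being the configuration infected exactly on $\{i,i+1,\dots\}$; also $\bp{\mathbf 1}{}=\mathbf 1$. Writing $\beta^{(-\infty)}=\mathbf 0$ and $\beta^{(\infty)}=\mathbf 1$, this confirms the description of $\cE$ stated above and shows that the events $A_i:=\{\eta:\bp{\eta}{}=\beta^{(i)}\}$, $i\in\Z\cup\{\pm\infty\}$, form a countable measurable partition of $\Omega$. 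Consequently any stationary $\mu$ can be written $\mu=\sum_i\lambda_i\mu_i$ with $\lambda_i=\mu(A_i)\ge0$, $\sum_i\lambda_i=1$, and $\mu_i=\mu(\cdot\mid A_i)$ when $\lambda_i>0$. By \Cref{thm:1d_detailed_balance} the measure $\mu$ satisfies detailed balance~\eqref{eq:detailed_balance}; since a flip at $x$ with $c_x(\eta)>0$ forces $\eta_{x-1}=0$, it never moves the leftmost infection and hence never changes the ergodic component, so detailed balance descends to each $\mu_i$. After this reduction, everything comes down to identifying the $\mu_i$.

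For $i=\infty$ the component $A_\infty=\{\mathbf 1\}$ is a single point, so $\mu_\infty=\delta_{\mathbf 1}$. For $i=-\infty$ I would apply \Cref{lem:externally_spanned} with $\beta=\mathbf 0$ and $\mu'=\pi$: one checks that an infection located in $\{-L,\dots,-N-1\}$ empties $\Lambda_N$ by flooding it from the left, whereas its absence leaves $\Lambda_N$ full, so $E_{\mathbf 0}(\Lambda_L,\Lambda_N)$ is exactly the event that $\eta$ is infected somewhere in $\{-L,\dots,-N-1\}$; then $\pi(E_{\mathbf 0}(\Lambda_L,\Lambda_N))=1-p^{\,L-N}\xrightarrow{L\to\infty}1$, and since $\mu_{-\infty}$-a.s.\ the infections of $\eta$ are unbounded to the left, $\mu_{-\infty}(E_{\mathbf 0}(\Lambda_L,\Lambda_N))\uparrow1$ too, whence $\mu_{-\infty}=\pi$. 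For $i\in\Z$ I would instead use \Cref{lem:internally_spanned} with $\beta=\beta^{(i)}$ and $\mu'=\mu^{(i)}$ the product measure of~\eqref{eq:mui}: from the box bootstrap one sees that $\bp{\eta}{\Lambda_N}$ agrees with $\beta^{(i)}$ on $\Lambda_N$ exactly when $\eta_j=1$ for all $j\in\{-N,\dots,i-1\}$ and $\eta_i=0$, an event of full measure both for $\mu_i$ (it is forced by $\eta\in A_i$) and for $\mu^{(i)}$ (those coordinates being deterministic). As $\mu^{(i)}$ is a product measure which is readily seen to satisfy~\eqref{eq:detailed_balance} and to be supported on $A_i$, the lemma gives $\mu_i=\mu^{(i)}$. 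Assembling the three cases yields the announced decomposition.

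I do not expect a real obstacle here, since the argument is essentially bookkeeping built on the spanning lemmas, but two points deserve care. First, the ``emptiable'' component $A_{-\infty}$ cannot be handled via \Cref{thm:emptiable_stationary_measure}: the filled left boundary makes $\{\bp{\eta}{\Lambda_N}=\mathbf 0_{\Lambda_N}\}$ equal to $\{\eta_{-N}=0\}$, whose $\pi$-probability is $q$ and does not tend to $1$, so the \emph{external} spanning \Cref{lem:externally_spanned} is genuinely needed there. Second, one must justify that detailed balance passes to the conditional measures $\mu_i$, which relies on the East-specific fact, noted above, that a positive-rate flip never changes the leftmost infection. The remaining ingredients — the description of $\cE$, the explicit form of the box bootstrap, and the check that $\mu^{(i)}$ obeys detailed balance — are routine.
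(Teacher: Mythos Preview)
Your proposal is correct and follows essentially the same route as the paper: decompose $\mu$ along the East ergodic components, handle $i=\infty$ trivially, use \Cref{lem:internally_spanned} for $i\in\Z$, and use \Cref{lem:externally_spanned} for $i=-\infty$. The only noteworthy difference is in how you obtain detailed balance for the conditional measures $\mu_i$: the paper observes that $A_i$ is an invariant event for the dynamics, so $\mu(\cdot\mid A_i)$ is again stationary and \Cref{thm:1d_detailed_balance} applies afresh, whereas you argue that detailed balance \emph{descends} from $\mu$ to $\mu_i$ because legal East flips preserve the leftmost infection. Your route is sound but implicitly needs the measure-theoretic step of passing the cylinder identity $\mu(\zeta_{\Lambda'})\pi(\zeta_{\Lambda'}^x)=\mu(\zeta_{\Lambda'}^x)\pi(\zeta_{\Lambda'})$ (valid for all $\Lambda'\supset\Lambda$) to the non-cylinder event $A_i$; the paper's formulation sidesteps this by re-invoking stationarity.
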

The tools developed in this section are limited to either one-dimensional models or translation-invariant measures. However, special properties of the East model allow us to obtain a general result in any dimension.

For $x,y\in (\bbZ\cup \{\pm \infty\})^d$, we say that $y$ precedes $x$ if each coordinate of $y$ is not larger than the corresponding coordinate of $x$. 
We also write $\cP^{-}_x$ for the set of vertices which strictly precede $x$, and $\cP^+_x$ for the set of vertices strictly preceded by $x$.  With this notation, the constraint $c_x$ of the $d$-dimensional East model requires at least one vacancy among the neighbors of $x$ in $\cP^-_x$. In particular, under the corresponding bootstrap percolation, a single vacancy at $x$ is able to empty $\cP^+_x$. Therefore, the $0$'s of any $\b \in \cE$ can be written as $\cup_{x\in I} \cP_x^+$ for some set $I \subset (\bbZ\cup \{\pm \infty\})^d$. Taking $I_\b$ as the minimal such set, we observe that $\cE$ is in one-to-one correspondence with the collection of sets  $I \subset (\bbZ\cup \{\pm \infty\})^d$ satisfying $x,y\in I \Rightarrow y \notin \cP_x^+$. 
We note that $\mathbf 0 \in \cE$ corresponds to the singleton $I_{\mathbf 0}=\{(-\infty,\dots,-\infty)\}$, and $\mathbf 1 \in \cE$ to the singleton $I_{\mathbf 1}= \{(\infty,\dots,\infty)\}$.

For a stable configuration $\b,$ let 
 $\pi_{\beta}$ be the measure $\pi$ conditioned
on the event $\{\bp{\eta}{}=\beta\}$.
It is immediate to verify that $\pi_\b$ is the product measure with marginals   
 \begin{equation}
\label{eq:muibis}
\pi_\b(\eta_{x}=1)=\begin{cases}
1 & \text{if $x\notin \cup_{x\in I_\b}\{\cP^+_x \cup \{x\}\}$},\\
0 & \text{if $x\in I_\b$}\\
p & \text{otherwise.}
\end{cases}
\end{equation}

The next theorem generalizes Theorem \ref{thm:east_stationary_measures} in higher dimensions. Since $\mathcal{E}$ is now uncountable, the sum in Theorem \ref{thm:east_stationary_measures} should be replaced by in integral with respect to some measure $\mu_*$ on $\mathcal{E}$:
\begin{theorem}
    \label{thm:d-East} Let $\mu$ be a stationary measure for the $d$-dimensional East model, and define $\mu_*$ as the law of the random variable $\bp{\eta}{}$ when $\eta\sim \mu$. Then
    \begin{equation}
        \mu = \int_\cE d\mu_*(\b) \pi_\b.
    \end{equation}
\end{theorem}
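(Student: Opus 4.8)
\textbf{Proof proposal for Theorem \ref{thm:d-East}.}

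The plan is to reduce the statement to Lemma \ref{lem:externally_spanned} by disintegrating $\mu$ over the ergodic components and checking, for each component, the external spanning hypothesis against the reference product measure $\pi_\b$. First I would record that $\mu_*$ is a genuine probability measure on $\cE$ (the map $\eta\mapsto \bp{\eta}{}$ is measurable since it is a pointwise limit of local functions), and disintegrate $\mu=\int_\cE d\mu_*(\b)\, \mu_\b$, where $\mu_\b$ is $\mu$ conditioned on $\{\bp{\eta}{}=\b\}$, hence supported on the ergodic component of $\b$. Since $\mu$ is stationary, each $\mu_\b$ is stationary for the chain restricted to that component (ergodic components are preserved by the dynamics), so by Theorem \ref{thm:1d_detailed_balance} in $d=1$, or by Theorem \ref{thm:east_stationary_measures}'s underlying argument — more precisely, by the $d$-dimensional analogue: one invokes that $\mu_\b$ satisfies detailed balance \eqref{eq:detailed_balance}. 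In dimension $d\ge 2$ this last point needs the translation-invariance hypothesis of Holley's theorem, which we do \emph{not} have for $\mu_\b$; so the correct route is to observe that for the East model detailed balance can be established directly inside each ergodic component without translation invariance, because the component is a "staircase" region on which the constraint structure is essentially one-dimensional along each coordinate line. I would isolate this as the key lemma: any stationary $\mu$ for the $d$-dimensional East model satisfies \eqref{eq:detailed_balance}.

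Granting detailed balance for $\mu_\b$, the goal becomes $\mu_\b=\pi_\b$, and both are measures satisfying detailed balance supported on the ergodic component of $\b$, with $\pi_\b$ a product measure by \eqref{eq:muibis}. I would then apply Lemma \ref{lem:externally_spanned} with this $\b$: it suffices to show that for every fixed box $\Lambda_N$,
\begin{equation}
\lim_{L\to\infty}\mu_\b\bigl(E_\b(\Lambda_L,\Lambda_N)\bigr)=\lim_{L\to\infty}\pi_\b\bigl(E_\b(\Lambda_L,\Lambda_N)\bigr)=1.
\end{equation}
For $\pi_\b$ this is a direct computation: $\pi_\b$ puts vacancies with positive density $q$ on $\cup_{x\in I_\b}\cP^+_x$ and is frozen to $1$ elsewhere; given the monotone ("oriented") nature of East bootstrap percolation, a vacancy placed anywhere in $\Lambda_L$ at a site of $\cP^+_x\cap\Lambda_L$ for some $x\in I_\b$ floods $\cP^+_x\cap\Lambda_N$, and with probability tending to $1$ as $L\to\infty$ one finds, for each relevant $x\in I_\b$ with $\cP^+_x\cap\Lambda_N\neq\emptyset$, such a vacancy sufficiently far in the "past" direction — so $E_\b(\Lambda_L,\Lambda_N)$ occurs with $\pi_\b$-probability $\to 1$. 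For $\mu_\b$ the same conclusion is forced by the fact that $\mu_\b$ lives on the ergodic component of $\b$: a $\mu_\b$-typical configuration $\eta$ has $\bp{\eta}{}=\b$, and since bootstrap percolation is monotone and local, $\bp{\eta_{\Lambda_L}\cdot\mathbf 1}{\Lambda_L}\to\b$ on any fixed $\Lambda_N$ as $L\to\infty$, which is exactly the event $E_\b(\Lambda_L,\Lambda_N)$ eventually; a compactness/monotone-convergence argument upgrades this to probability $\to 1$. Then Lemma \ref{lem:externally_spanned} yields $\mu_\b=\pi_\b$, and integrating against $d\mu_*(\b)$ gives $\mu=\int_\cE d\mu_*(\b)\,\pi_\b$.

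The main obstacle, I expect, is the detailed-balance step in dimension $d\ge 2$ without translation invariance: Theorem \ref{thm:1d_detailed_balance} and Holley's theorem do not literally cover $\mu_\b$. I would handle it by exploiting the product/coordinate-wise structure of the East constraint: fix $x$ with $x+[-1,1]^d\subset\Lambda$; stationarity of $\mu$ gives a balance identity summing over all sites, and one peels off the contribution of $x$ by conditioning on the configuration outside a neighborhood and using that the only site whose constraint references $\eta_x$ lies in $\cP^+_x$ — reducing to a two-state detailed-balance computation exactly as in the one-dimensional proof. This localization is where care is needed, but it should go through because the East model's directed, single-neighbor constraint makes the relevant local generator effectively one-dimensional. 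Everything else (measurability of $\bp{\cdot}{}$, disintegration, the two limit computations) is routine.
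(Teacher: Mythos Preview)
Your proposal has a genuine gap at the detailed-balance step, and the paper takes a different route precisely to avoid it.

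You correctly flag that neither Theorem \ref{thm:1d_detailed_balance} nor Holley's translation-invariant theorem covers $\mu_\b$ in $d\ge 2$, but your workaround (``peel off the contribution of $x$ by conditioning \dots\ reducing to a two-state detailed-balance computation'') does not go through. Stationarity gives only the global identity $\sum_x\mu\bigl(c_x(\eta)(q\eta_x+p(1-\eta_x))(f(\eta^x)-f(\eta))\bigr)=0$; extracting the single-site relation \eqref{eq:detailed_balance} from it is exactly what the entropy-production machinery is for, and the Remark after the proof of Theorem \ref{thm:1d_detailed_balance} explains why that machinery breaks in $d\ge 2$: the boundary-entropy error in Lemma \ref{lem:boundary_entropy_upper_bound} scales with $|\partial\L|$. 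The orientation of the East constraint makes the dependency graph a DAG, but it does not shrink the boundary, so your ``effectively one-dimensional'' localization is not justified.

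There is a second, more concrete problem with invoking Lemma \ref{lem:externally_spanned}. If $\b$ has a source $x\in I_\b\cap\L_N\cap\bbZ^d$, then for $\mu_\b$-a.e.\ $\eta$ one has $\eta_y=1$ for every $y\in\cP^-_x$ (otherwise $\bp{\eta}{}\neq\b$). After replacing $\eta_{\L_N}$ by $\mathbf 1_{\L_N}$, the site $x$ can never be infected by bootstrap in $\L_L$, so $\mu_\b(E_\b(\L_L,\L_N))=0$ for all $L$, not $\to 1$. You would need a hybrid of Lemmas \ref{lem:internally_spanned} and \ref{lem:externally_spanned} tailored to the shape of each $\b$, which you have not set up.

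The paper sidesteps both issues by proving something stronger and more direct: for every deterministic $\eta$ with $\bp{\eta}{}=\b$, the law of $\eta(t)$ converges to $\pi_\b$. The key point is that, by orientation, for any finite $I\subset I_\b$ and $M>0$ the marginal of the East process on $\L=\bigl(\cup_{x\in I}\cP^+_x\bigr)\cap\{\|\cdot\|_\infty\le M\}$ is an \emph{autonomous} finite ergodic chain with frozen boundary (empty at $I$, filled on $\cup_{x\in I}\cP^-_x$), reversible for $\pi_\L$; its convergence gives $\bbE_\eta(f(\eta_t))\to\pi_\b(f)$ for any local $f$ supported in $\cup_{x\in I_\b}\cP^+_x$. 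The case $\b=\mathbf 0$ is handled by \cite{Chleboun2015}. Pointwise convergence to $\pi_\b$ on each component already forces any stationary $\mu$ to equal $\int_\cE\pi_\b\,d\mu_*(\b)$, so detailed balance is never invoked.
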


In view of the discussion above and of Theorem \ref{thm:d-East}, we propose the following conjecture.
\begin{conjecture}\label{conj:1bis}
Let $\mu$ be a stationary measure of a KCM. Then there exists a probability
measure $\mu_{*}$ on $\mathcal{E}$ such that $\mu=\int_\cE d\mu_{*}(\beta)\pi_{\beta}$,
where $\pi_{\beta}$ is given by the measure $\pi$ conditioned
on the event $\{\bp{\eta}{}=\beta\}$.
\end{conjecture}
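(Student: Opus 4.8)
The plan is to decompose a general stationary $\mu$ along the ergodic components of the KCM and to reduce the conjecture to a uniqueness statement on each component, for which the tools of this section already supply most of the ingredients; I will point out the two places where they must be pushed further. The map $\eta\mapsto\bp{\eta}{}$ is measurable (it is the pointwise monotone limit of the finite‑range bootstrap dynamics) and, by the ergodic‑components theorem quoted above from \cite[Corollary 3.7]{HT}, it is invariant under legal flips; hence for every $\beta\in\mathcal{E}$ the event $\mathcal{C}_\beta:=\{\bp{\eta}{}=\beta\}$ satisfies $P_t\id_{\mathcal{C}_\beta}=\id_{\mathcal{C}_\beta}$, i.e.\ it lies in the invariant $\sigma$‑field. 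Taking $\mu_*$ to be the law of $\bp{\eta}{}$ under $\mu$ and disintegrating, $\mu=\int_{\mathcal{E}}\mu_\beta\,d\mu_*(\beta)$ with $\mu_\beta:=\mu(\,\cdot\mid\bp{\eta}{}=\beta)$; since one conditions on a sub‑$\sigma$‑field of the invariant one, $\mu_*$‑a.e.\ $\mu_\beta$ is again stationary and is carried by $\mathcal{C}_\beta$. This identifies the $\mu_*$ of the conjecture and reduces it to the claim that, for every $\beta$, the measure $\pi_\beta$ is well defined, stationary, supported on $\mathcal{C}_\beta$, and is the \emph{unique} stationary measure with that last property — so that $\mu_\beta=\pi_\beta$.

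The first thing to settle is the meaning of $\pi_\beta$: when $\pi(\mathcal{C}_\beta)>0$ (e.g.\ $\beta=\mathbf 0$ for FA-1f) the naive conditioning is fine, but generically $\pi(\mathcal{C}_\beta)=0$ — already for the measures $\mu^{(i)}$ of Theorem~\ref{thm:east_stationary_measures} and for \eqref{eq:muibis} in higher dimensions — so one must give a model‑independent definition extending \eqref{eq:muibis}: freeze $\eta$ to $\beta$ on the ``rigid skeleton'' of $\beta$ (the coordinates that are constant throughout $\mathcal{C}_\beta$) and put $\pi$ on the remaining coordinates conditioned to re‑percolate to $\beta$; in all four models this turns out to be an explicit product measure, and one expects a product structure in general. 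One then has to check $\pi_\beta$ is stationary and carried by $\mathcal{C}_\beta$ — for instance by realising it as $\lim_N\pi(\,\cdot\mid\bp{\eta}{\Lambda_N}=\beta_{\Lambda_N})$ — and that both $\mu_\beta$ and $\pi_\beta$ satisfy detailed balance \eqref{eq:detailed_balance}. In dimension one the latter is exactly Theorem~\ref{thm:1d_detailed_balance} applied to each of them; in $d\ge2$ the only statement available, \cite{Holley}, requires translation invariance, which $\mu_\beta$ need not inherit, so upgrading detailed balance to \emph{all} stationary measures in every dimension is the first gap to close.

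With detailed balance in hand, I would run the comparison behind Lemmas~\ref{lem:internally_spanned}--\ref{lem:externally_spanned}: conditionally on $\eta$ outside a large box $\Lambda_L$ and on the bootstrap class of the restriction of $\eta$ to $\Lambda_L$, \eqref{eq:detailed_balance} forces the conditional law inside $\Lambda_L$ to be $\pi_{\Lambda_L}$ restricted to that class, \emph{the same for $\mu_\beta$ and for $\pi_\beta$}; it then remains to see that $\Lambda_N$ is $\beta$‑spanned internally, or $\Lambda_N$ externally from $\Lambda_L$, with probability tending to $1$ under \emph{any} detailed‑balance measure on $\mathcal{C}_\beta$, not merely under a product one. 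The first half is robust. For the spanning half, a measure carried by $\mathcal{C}_\beta$ does satisfy $\mu_\beta(\bp{\eta}{\Lambda_N}_{\Lambda_M}=\beta_{\Lambda_M})\to1$ for each fixed $M$ — any site emptied by the infinite‑volume bootstrap is emptied in finitely many steps, hence inside some finite box, hence also inside $\Lambda_N$ with filled boundary once $N$ is large — but promoting this to agreement on \emph{all} of $\Lambda_N$, with enough uniformity to feed Lemma~\ref{lem:internally_spanned} or \ref{lem:externally_spanned}, is precisely what the product hypothesis buys in the present proofs.

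I therefore expect the crux — and the reason the conjecture is still open — to be exactly this last input, to be supplied in one of two ways: either prove that $\pi_\beta$ is always a product (or at least sufficiently explicit) measure on $\mathcal{C}_\beta$ and invoke Lemmas~\ref{lem:internally_spanned}--\ref{lem:externally_spanned} verbatim; or strengthen those lemmas so that the reference measure $\mu'$ only needs to satisfy detailed balance plus the spanning hypothesis, which amounts to a self‑improvement argument showing that a detailed‑balance measure on an irreducible component of a (non‑attractive) infinite KCM automatically has the same finite‑box spanning asymptotics as $\pi$. Either route is a genuinely robust ``uniqueness of the reversible measure on an ergodic component'' theorem, and it is the absence of monotonicity that makes it the central difficulty.
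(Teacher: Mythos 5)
The statement you are addressing is Conjecture \ref{conj:1bis}: the paper explicitly presents it as open and offers no proof, only partial results in special cases (Corollary \ref{cor:stationary measures}, Theorem \ref{thm:emptiable_stationary_measure}, Theorems \ref{thm:east_stationary_measures} and \ref{thm:d-East}). Your text is, by your own admission, not a proof but a roadmap with acknowledged gaps, so it cannot be accepted as establishing the conjecture. That said, the roadmap is faithful to the paper's machinery: the decomposition $\mu=\int_\cE \mu_\beta\,d\mu_*(\beta)$ with $\mu_*$ the law of $\bp{\eta}{}$ is exactly the skeleton of the proofs of Theorems \ref{thm:east_stationary_measures} and \ref{thm:d-East}, and the reduction to ``detailed balance plus a spanning hypothesis'' is precisely the content of Theorem \ref{thm:1d_detailed_balance} combined with Lemmas \ref{lem:internally_spanned} and \ref{lem:externally_spanned}. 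The gaps you name are genuine and are the reason the statement is a conjecture: (i) detailed balance for arbitrary (non translation-invariant) stationary measures is only known in $d=1$; (ii) $\pi_\beta$ must be given a meaning when $\pi(\bp{\eta}{}=\beta)=0$, and one must show it is (or behaves like) a product measure; (iii) most seriously, the spanning hypotheses of Lemmas \ref{lem:internally_spanned} and \ref{lem:externally_spanned} need not hold for a general KCM and a general $\beta\in\cE$ — neither mechanism is guaranteed to apply, and removing the product-measure assumption on $\mu'$ from those lemmas is an unproved strengthening. Until at least one of (i)--(iii) is resolved in general, the argument does not close.

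Two further remarks. First, your claim that $\mu_\beta$ is $\mu_*$-a.e.\ stationary by ``conditioning on an invariant event'' is fine when $\mathcal{E}$ is countable (as for the one-dimensional East model), but when $\mu_*$ is non-atomic you are disintegrating over a non-countable family of null events, and stationarity of the disintegrated measures must be justified via the ergodic decomposition with respect to the invariant $\sigma$-field; this is standard but should not be glossed. Second, you overlook the route the paper actually uses for the hardest case it handles, Theorem \ref{thm:d-East}: there the identification $\mu_\beta=\pi_\beta$ is obtained \emph{dynamically}, by showing $\lim_{t\to\infty}\bbP_\eta(\eta(t)\in\cdot)=\pi_\beta$ for every $\eta$ in the component of $\beta$ (exploiting the orientation of the East constraint), and then invoking stationarity of $\mu$ — no detailed-balance or spanning lemma is needed. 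This suggests a third possible attack on the conjecture, namely proving convergence to equilibrium within each ergodic component, which is of course essentially Conjecture \ref{conj:1} and equally open.
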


\subsection{Proof of Theorem \ref{thm:1d_detailed_balance}}

The proof follows the same steps as \cite[chapter IV.5]{Liggett05},
see also \cite{SudburyNeuhauser93}. We will use the same notation to
explain briefly how the argument adapts to KCMs, and where it fails
in $d=2$. To simplify the notation we assume $q\le p$. 

Recall the following definitions from \cite{Liggett05}:
\begin{align*}
\Gamma^{\mu}(x,\zeta)  = & \, \mu(c_{x}(\eta)\left(q\eta(x)+p(1-\eta(x))\right)\id_{\zeta=\eta_{\Lambda}}),& \quad x\in\Lambda\text{ and }\zeta\in\Omega_{\Lambda},\\
\alpha_{\Lambda}^{\mu}(x)  = &  \sum_{\zeta\in\Omega_{\Lambda}}\left(\Gamma^{\mu}(x,\zeta)-\Gamma^{\mu}(x,\zeta^{x})\right)\log\left(\frac{\Gamma^{\mu}(x,\zeta)}{\Gamma^{\mu}(x,\zeta^{x})}\right),& \quad  x+[-\range,\range]\subseteq\Lambda,\\
\beta_{\Lambda}^{\mu}(x)  = & \sum_{\zeta\in\Omega_{\Lambda}}\left|\Gamma^{\mu}(x,\zeta)-\Gamma^{\mu}(x,\zeta^{x})\right|,& \quad  x\in\Lambda,
\end{align*}
and the lemma:
\begin{lemma}
\label{lem:liggett_5_8}\ 
\begin{enumerate}
\item $0\le\alpha_{\Lambda}^{\mu}(x)\le\alpha_{\Lambda'}^{\mu}(x)$ for
$x\in\Lambda\subset\Lambda'$.
\item $\beta_{\Lambda}^{\mu}(x)^{2}\le2\alpha_{\Lambda}^{\mu}(x)$ for all
$x$ such that $x+\left[-\range,\range\right]\in\Lambda$.
\end{enumerate}
\end{lemma}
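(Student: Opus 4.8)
The plan is to adapt the two classical estimates behind \cite[Chapter~IV.5]{Liggett05}, checking that their only structural inputs — that $c_x(\cdot)$ is bounded and depends on finitely many coordinates — remain valid for KCMs. As in \cite{Liggett05} I use the conventions $0\log 0=0$ and $(a-b)\log(a/b)=+\infty$ when exactly one of $a,b$ vanishes, so that a summand of $\alpha^\mu_\Lambda(x)$ or $\beta^\mu_\Lambda(x)$ vanishes precisely when $\Gamma^{\mu}(x,\zeta)=\Gamma^{\mu}(x,\zeta^x)=0$; to lighten notation I write $\Gamma^\mu_\Lambda$ to record the box entering the definition of $\Gamma^\mu$.

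\emph{Part (1).} Non-negativity of $\alpha^\mu_\Lambda(x)$ is immediate since $\log$ is increasing: $a-b$ and $\log(a/b)$ always share a sign, so every summand is $\ge 0$. For the monotonicity I would first note that if $x+[-\range,\range]\subseteq\Lambda$ then $\eta\mapsto c_x(\eta)\bigl(q\eta(x)+p(1-\eta(x))\bigr)$ factors through $\eta\mapsto\eta_\Lambda$, whence for $\Lambda\subseteq\Lambda'$ and $\zeta\in\Omega_\Lambda$ one gets the marginalization identity $\Gamma^\mu_\Lambda(x,\zeta)=\sum_{\xi\in\Omega_{\Lambda'\setminus\Lambda}}\Gamma^\mu_{\Lambda'}(x,\zeta\cdot\xi)$. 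Next, $\phi(a,b):=(a-b)\log(a/b)$ equals $b\,g(a/b)$ with $g(t):=(t-1)\log t$ and $g''(t)=t^{-1}+t^{-2}>0$; thus $\phi$ is the perspective of a convex function, hence jointly convex and positively $1$-homogeneous on $[0,\infty)^2$, and therefore subadditive, $\phi\bigl(\sum_i a_i,\sum_i b_i\bigr)\le\sum_i\phi(a_i,b_i)$. Applying subadditivity with $a_\xi=\Gamma^\mu_{\Lambda'}(x,\zeta\cdot\xi)$ and $b_\xi=\Gamma^\mu_{\Lambda'}(x,\zeta^x\cdot\xi)$ — using that $x\in\Lambda$ gives $(\zeta\cdot\xi)^x=\zeta^x\cdot\xi$ — yields $\phi\bigl(\Gamma^\mu_\Lambda(x,\zeta),\Gamma^\mu_\Lambda(x,\zeta^x)\bigr)\le\sum_\xi\phi\bigl(\Gamma^\mu_{\Lambda'}(x,\zeta\cdot\xi),\Gamma^\mu_{\Lambda'}(x,(\zeta\cdot\xi)^x)\bigr)$, and summing over $\zeta\in\Omega_\Lambda$ repackages the right-hand side as the single sum defining $\alpha^\mu_{\Lambda'}(x)$, giving $\alpha^\mu_\Lambda(x)\le\alpha^\mu_{\Lambda'}(x)$.

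\emph{Part (2).} Write $d_\zeta=\Gamma^\mu_\Lambda(x,\zeta)-\Gamma^\mu_\Lambda(x,\zeta^x)$ and $\ell_\zeta=\log\bigl(\Gamma^\mu_\Lambda(x,\zeta)/\Gamma^\mu_\Lambda(x,\zeta^x)\bigr)$; these share a sign, so $d_\zeta\ell_\zeta=|d_\zeta|\,|\ell_\zeta|\ge 0$ and $\alpha^\mu_\Lambda(x)=\sum_\zeta d_\zeta\ell_\zeta$. By Cauchy--Schwarz,
\[
\beta^\mu_\Lambda(x)=\sum_\zeta|d_\zeta|=\sum_\zeta\sqrt{\tfrac{|d_\zeta|}{|\ell_\zeta|}}\;\sqrt{|d_\zeta|\,|\ell_\zeta|}\;\le\;\Bigl(\sum_\zeta\tfrac{|d_\zeta|}{|\ell_\zeta|}\Bigr)^{1/2}\alpha^\mu_\Lambda(x)^{1/2},
\]
where $|d_\zeta|/|\ell_\zeta|$ is interpreted as the logarithmic mean $L\bigl(\Gamma^\mu_\Lambda(x,\zeta),\Gamma^\mu_\Lambda(x,\zeta^x)\bigr)$ (with $L(a,a):=a$). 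Using $L(a,b)\le\tfrac{a+b}{2}$ and that $\zeta\mapsto\zeta^x$ is a bijection of $\Omega_\Lambda$,
\[
\sum_\zeta\tfrac{|d_\zeta|}{|\ell_\zeta|}\;\le\;\sum_\zeta\tfrac{\Gamma^\mu_\Lambda(x,\zeta)+\Gamma^\mu_\Lambda(x,\zeta^x)}{2}=\sum_\zeta\Gamma^\mu_\Lambda(x,\zeta)=\mu\bigl(c_x(\eta)(q\eta(x)+p(1-\eta(x)))\bigr)\le 2,
\]
the last bound holding because $\|c_x\|_\infty\le 2$ for each of the four models and $q\le p\le 1$. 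Combining the two displays gives $\beta^\mu_\Lambda(x)^2\le 2\alpha^\mu_\Lambda(x)$.

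\emph{Where the difficulty lies.} I expect essentially no obstacle here: the argument is Liggett's verbatim, and the only two analytic facts used — joint convexity and $1$-homogeneity of $\phi$ (hence its subadditivity) and the arithmetic--logarithmic mean inequality — are classical. The single point requiring attention is that nothing in the argument uses the specific form of an exclusion- or voter-type flip rate; all that enters is the boundedness and locality of $c_x$, which a KCM constraint certainly satisfies. The genuinely one-dimensional, non-translation-invariant content of Theorem~\ref{thm:1d_detailed_balance} shows up only downstream, when Lemma~\ref{lem:liggett_5_8} is combined with a global entropy-production identity.
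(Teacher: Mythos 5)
Your proof is correct and is essentially the argument behind Liggett's Lemma IV.5.8, which the paper imports by citation without reproducing: non-negativity and monotonicity of $\alpha^\mu_\Lambda$ via the marginalization identity for $\Gamma^\mu$ together with the joint convexity and $1$-homogeneity (hence subadditivity) of $(a,b)\mapsto(a-b)\log(a/b)$, and part (2) via Cauchy--Schwarz plus the arithmetic--logarithmic mean inequality. The two KCM-specific points are also handled correctly — the degenerate rates only require the standard $0\log 0$ conventions (and when exactly one of $\Gamma^\mu(x,\zeta),\Gamma^\mu(x,\zeta^x)$ vanishes the corresponding summand of $\alpha^\mu_\Lambda$ is $+\infty$, so part (2) is vacuous there), and the constant $2$ comes from $\|c_x\|_\infty\le 2$ for the models considered together with $\max(p,q)\le 1$.
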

The proof of Theorem \ref{thm:1d_detailed_balance} is based on the
fact that the entropy production of a stationary measure is $0$.
\cite[Lemma 5.1]{Liggett05} shows that the entropy production can be
decomposed into bulk and boundary contributions $h_{\Lambda}^{\circ}(\mu)+h_{\Lambda}^{\partial}(\mu)$,
where 
\begin{eqnarray}
h_{\Lambda}^{\circ}(\mu) & = & -\frac{1}{2}\sum_{\substack{x\in\Lambda\\
x+[-\range,\range]\subseteq\Lambda
}
}\alpha_{\Lambda}^{\mu}(x),\label{eq:bulk_entropy}\\
h_{\Lambda}^{\partial}(\mu) & = & \sum_{\substack{x\in\Lambda\\
x+[-\range,\range]\not\subseteq\Lambda
}
}\sum_{\zeta\in\Omega_{\Lambda}}\left(\Gamma^{\mu}(x,\zeta)-\Gamma^{\mu}(x,\zeta^{x})\right)\log\frac{\mu(\zeta)}{\pi(\zeta)}.
\end{eqnarray}

The next ingredient we will need is to bound $h_{\Lambda}^{\partial}(\mu)$
using $\beta_{\Lambda}^{\mu}$. \cite{Liggett05} \footnote{In \cite{Liggett05} the sum defining $h_{\Lambda}^{\circ}(\mu)$ is over
all $x\in\Lambda$, with and additional term in $h_{\Lambda}^{\partial}(\mu)$
to compensate} uses the fact that $\Gamma^{\mu}(x,\zeta)$ is bounded away from
$0$; this is not the case for KCMs, due to the degeneracy of rates.
We will instead use the following lemma:
\begin{lemma}
\label{lem:boundary_entropy_upper_bound}For any $\lambda_{0}>1$,
\[
h_{\Lambda}^{\partial}(\mu)\le\frac{\lambda_{0}}{2}\sum_{\substack{x\in\Lambda\\
x+[-\range,\range]\not\subseteq\Lambda
}
}\beta_{\Lambda}^{\mu}(x)+\frac{p^{2}}{q}\times2\range\times\lambda_{0}e^{-\lambda_{0}}.
\]
\end{lemma}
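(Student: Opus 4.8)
The plan is to bound each summand of $h_\Lambda^\partial(\mu)$ term by term, splitting $\Omega_\Lambda$ according to whether the logarithmic factor $\log(\mu(\zeta)/\pi(\zeta))$ is large or small. Write $a_\zeta = \Gamma^\mu(x,\zeta)-\Gamma^\mu(x,\zeta^x)$ and $L_\zeta = \log(\mu(\zeta)/\pi(\zeta))$, so that the inner sum is $\sum_\zeta a_\zeta L_\zeta$. Note that $a_\zeta$ and $a_{\zeta^x}$ are related: pairing $\zeta$ with $\zeta^x$ and using $a_{\zeta^x}=-a_\zeta$ (since flipping $x$ swaps the two $\Gamma$ values), the sum $\sum_\zeta a_\zeta L_\zeta$ can be rewritten as $\tfrac12\sum_\zeta a_\zeta(L_\zeta - L_{\zeta^x})$. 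Since $c_x$ and the single-site flip rate do not depend on $\eta_x$, one has $\pi(\zeta)q\eta(x)+\dots$ bookkeeping giving $\pi(\zeta^x)/\pi(\zeta)\in\{q/p,p/q\}$, hence $L_\zeta - L_{\zeta^x} = \log(\mu(\zeta)/\mu(\zeta^x)) - \log(\pi(\zeta)/\pi(\zeta^x))$; the point is that this difference is exactly the quantity that vanishes under detailed balance, and here we only need to control it crudely.

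Next I would apply the elementary inequality $uv \le \tfrac{\lambda_0}{2}|u| + \tfrac{1}{2\lambda_0}v^2 \cdot \text{(something)}$ — more precisely, the bound $uv \le \tfrac{\lambda_0}{2}|u|$ whenever $|v|\le\lambda_0$, together with a separate treatment of the region $|v|>\lambda_0$. Take $u = a_\zeta$ and $v = L_\zeta - L_{\zeta^x}$ (or just $L_\zeta$, depending on which normalization is cleanest). On the event $\{|v|\le \lambda_0\}$ we get a contribution $\le \tfrac{\lambda_0}{2}\sum_\zeta |a_\zeta| = \tfrac{\lambda_0}{2}\beta_\Lambda^\mu(x)$ after summing the appropriate half-sum back up, which is the first term in the claimed bound. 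On the complementary event $\{|v| > \lambda_0\}$, i.e. $\mu(\zeta)/\pi(\zeta) > e^{\lambda_0}$ (or its reciprocal), I would bound $|a_\zeta| \le \Gamma^\mu(x,\zeta)+\Gamma^\mu(x,\zeta^x)$ and use that $\Gamma^\mu(x,\zeta)\le p\,\mu(\zeta)$ (from $q\eta(x)+p(1-\eta(x))\le p$ and $c_x\le 1$ in the normalized models — for BABP one absorbs the factor $2$, but here $c_x \le 2\range$ covers the general range-$\range$ case). Then $|a_\zeta|\,|L_\zeta| \le p\,\mu(\zeta)\,|L_\zeta| + p\,\mu(\zeta^x)|L_\zeta|$, and using $\mu(\zeta) = \pi(\zeta)e^{L_\zeta} \le \pi(\zeta)e^{-\lambda_0}\cdot(e^{L_\zeta}e^{\lambda_0})$ together with the fact that on this event $e^{L_\zeta}$ dominates and the map $t\mapsto t e^{-t}$ is decreasing past $t=1$, one extracts a factor $\lambda_0 e^{-\lambda_0}$; summing $\pi(\zeta)$ over $\zeta$ gives $1$, and the prefactor collects to $\tfrac{p^2}{q}\times 2\range$ (the $1/q$ coming from the worst-case ratio $\pi(\zeta)/\pi(\zeta^x)\le p/q$ entering when we pass between $\mu(\zeta)$ and $\mu(\zeta^x)$). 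This produces the second, additive error term $\tfrac{p^2}{q}\times 2\range \times \lambda_0 e^{-\lambda_0}$.

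The main obstacle I anticipate is the bookkeeping in the large-$v$ regime: one must be careful that the logarithm $L_\zeta$ can be either large positive or large negative, and in the negative case $\mu(\zeta)$ is tiny so $|a_\zeta|$ is controlled directly, whereas in the positive case one must trade the largeness of $L_\zeta$ against the smallness forced by $t e^{-t}\le \lambda_0 e^{-\lambda_0}$ for $t\ge\lambda_0$ — getting the constants to land exactly as $\tfrac{p^2}{q}\cdot 2\range$ requires tracking the three factors (the rate bound $p$, the boundary-site multiplicity $2\range$, and the measure-ratio bound $p/q$) without slack. A secondary subtlety is that $h_\Lambda^\partial$ as written sums only over boundary sites $x$ with $x+[-\range,\range]\not\subseteq\Lambda$, so the number of relevant $\zeta$-flips touching the boundary is what the factor $2\range$ is really counting; I would make that explicit before starting the estimate. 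Everything else is the standard convexity/entropy manipulation from \cite[Chapter IV.5]{Liggett05}, adapted only to replace ``$\Gamma^\mu$ bounded below'' by the degenerate-rate-friendly split above.
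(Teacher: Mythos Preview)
Your proposal is essentially the paper's argument: after symmetrizing to
\[
h_\Lambda^\partial(\mu)=\tfrac12\sum_{x}\sum_{\zeta}\bigl(\Gamma^\mu(x,\zeta)-\Gamma^\mu(x,\zeta^x)\bigr)\,\lambda(x,\zeta),
\qquad
\lambda(x,\zeta)=\log\Bigl(\tfrac{\mu(\zeta)}{\pi(\zeta)}\tfrac{\pi(\zeta^x)}{\mu(\zeta^x)}\Bigr),
\]
one splits on $|\lambda|\lessgtr\lambda_0$, the bounded part yields $\tfrac{\lambda_0}{2}\beta_\Lambda^\mu(x)$, and the tail is controlled via the change-of-measure identity $\mu(\zeta^x)=\mu(\zeta)\tfrac{\pi(\zeta^x)}{\pi(\zeta)}e^{-\lambda}$ together with $t\,e^{-t}\le\lambda_0 e^{-\lambda_0}$ for $t\ge\lambda_0>1$, the ratio bound $\pi(\zeta^x)/\pi(\zeta)\le p/q$, and $\Gamma^\mu\le p\,\mu$. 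The factor $2\range$ is, as you correctly identify at the end, the cardinality of the one-dimensional boundary layer $\{x\in\Lambda:x+[-\range,\range]\not\subseteq\Lambda\}$, not a bound on $c_x$.

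The one place where your sketch would need tightening is the tail bookkeeping, which you honestly flag. Two concrete points: (i) the threshold split must be on the symmetrized $\lambda=L_\zeta-L_{\zeta^x}$, not on $L_\zeta$ alone---you oscillate between the two, and only the former makes the change-of-measure identity produce the factor $\lambda e^{-\lambda}$; (ii) rather than the crude $|a_\zeta|\le\Gamma^\mu(x,\zeta)+\Gamma^\mu(x,\zeta^x)$, the paper on each tail keeps only the sign-favorable half, i.e.\ $(\Gamma(\zeta)-\Gamma(\zeta^x))\lambda\le\Gamma(\zeta)\lambda$ on $\{\lambda>\lambda_0\}$ and $\le\Gamma(\zeta^x)|\lambda|$ on $\{\lambda<-\lambda_0\}$, so that the surviving sums match the two preliminary estimates $\sum_\zeta\mu(\zeta^x)\lambda\,\id_{\lambda>\lambda_0}$ and $\sum_\zeta\mu(\zeta)|\lambda|\,\id_{\lambda<-\lambda_0}$. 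Your coarser bound would generate an extra term of the shape $\sum_\zeta\mu(\zeta)\,\lambda\,\id_{\lambda>\lambda_0}$, on which large positive $\lambda$ forces $\mu(\zeta^x)$ small but says nothing about $\mu(\zeta)$, so the $te^{-t}$ mechanism does not bite.
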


\begin{proof}
By symmetrizing $\omega$ and $\omega^{x}$, we can express 
\begin{eqnarray*}
h_{\Lambda}^{\partial}(\mu) & = & \frac{1}{2}\sum_{\substack{x\in\Lambda\\
x+[-\range,\range]\not\subseteq\Lambda
}
}\sum_{\zeta\in\Omega_{\Lambda}}\left(\Gamma^{\mu}(x,\zeta)-\Gamma^{\mu}(x,\zeta^{x})\right)\lambda(x,\zeta)
\end{eqnarray*}
where $\lambda(x,\zeta)=\log\left(\frac{\mu(\zeta)}{\pi(\zeta)}\frac{\pi(\zeta^{x})}{\mu(\zeta^{x})}\right)$.

Note that, since $\frac{\pi(\zeta^{x})}{\pi(\zeta)}\le\frac{p}{q}$,
\[
\sum_{\zeta\in\Omega_{\Lambda}}\mu(\zeta^{x})\lambda(x,\zeta)\id_{\lambda(x,\zeta)>\lambda_{0}}=\sum_{\zeta\in\Omega_{\Lambda}}\mu(\zeta^{x})\frac{\mu(\zeta)}{\pi(\zeta)}\frac{\pi(\zeta^{x})}{\mu(\zeta^{x})}\, e^{-\lambda}\id_{\lambda(x,\zeta)>\lambda_{0}}\le\frac{p}{q}e^{-\lambda_{0}}
\]
and similarly
\[
\sum_{\zeta\in\Omega_{\Lambda}}\mu(\zeta)\left|\lambda(x,\zeta)\right|\id_{\lambda(x,\zeta)<-\lambda_{0}}\le\lambda_{0}e^{-\lambda_{0}}.
\]

Then 
\begin{align*}
h_{\Lambda}^{\partial}(\mu) &=  \frac{1}{2}\sum_{\substack{x\in\Lambda\\
x+[-\range,\range]\not\subseteq\Lambda
}
}\sum_{\zeta\in\Omega_{\Lambda}}\left(\Gamma^{\mu}(x,\zeta)-\Gamma^{\mu}(x,\zeta^{x})\right)\, \lambda(x,\zeta)\left(\id_{\left|\lambda(x,\zeta)\right|\le\lambda_{0}}+\id_{\lambda(x,\zeta)>\lambda_{0}}+\id_{\lambda(x,\zeta)<-\lambda_{0}}\right)\\
 &\le  \frac{1}{2}\sum_{\substack{x\in\Lambda\\
x+[-\range,\range]\not\subseteq\Lambda
}
}\sum_{\zeta\in\Omega_{\Lambda}}\left|\Gamma^{\mu}(x,\zeta)-\Gamma^{\mu}(x,\zeta^{x})\right|\, \lambda_{0} +
\frac{1}{2}\sum_{\substack{x\in\Lambda\\
x+[-\range,\range]\not\subseteq\Lambda
}
}\sum_{\zeta\in\Omega_{\Lambda}}\Gamma^{\mu}(x,\zeta)\, \lambda\id_{\lambda>\lambda_{0}} \\
&\qquad +\frac{1}{2}\sum_{\substack{x\in\Lambda\\
x+[-\range,\range]\not\subseteq\Lambda
}
}\sum_{\zeta\in\Omega_{\Lambda}}\Gamma^{\mu}(x,\zeta^{x})\ \left|\lambda\right|\id_{\lambda<-\lambda_{0}}\\
 & \le  \frac{\lambda_{0}}{2}\sum_{\substack{x\in\Lambda\\
x+[-\range,\range]\not\subseteq\Lambda
}
}\beta_{\Lambda}^{\mu}(x)+p\times\#\left\{ x\in\Lambda:x+[-\range,\range]\not\subseteq\Lambda\right\} \times\frac{p}{q}\lambda_{0}e^{-\lambda_{0}}.
\end{align*}
\end{proof}
In the following, in order to simplify notation, we will consider
boxes of the type $\Lambda=[-k\range,k\range]$, and use the subscript
$k$ rather than $\Lambda$.

We decompose the sum (\ref{eq:bulk_entropy}) into layers of width
$\range$, and by Lemma \ref{lem:liggett_5_8}:
\[
-2h_{k}^{\circ}(\mu)\le\sum_{l=1}^{k}\sum_{(l-2)\range\le|x|<(l-1)\range}\alpha_{l}^{\mu}(x).
\]
On the other hand, $-2h_{\Lambda}^{\circ}(\mu)=2h_{\Lambda}^{\partial}(\mu)\le2\range\ p(1+\ln\frac{p}{q})$,
so $\sum_{l=1}^{k}\sum_{(l-2)\range\le|x|<(l-1)\range}\alpha_{k}^{\mu}(x)$
is bounded and increasing, hence $\sum_{(l-2)\range\le|x|<(l-1)\range}\alpha_{l}^{\mu}(x)\xrightarrow{l\to\infty}0$.
By Lemma \ref{lem:liggett_5_8}:
\[
\sup_{(l-2)\range\le|x|<(l-1)\range}\beta_{l}^{\mu}(x)\xrightarrow{l\to\infty}0,
\]
and together with the convexity of the absolute value 
\[
\sup_{(k-1)\range\le|x|<k\range}\beta_{k}^{\mu}(x)\le\sup_{(k-1)\range\le|x|<k\range}\beta_{k+1}^{\mu}(x)\xrightarrow{k\to\infty}0.
\]

Finally, we combine this result with Lemma \ref{lem:boundary_entropy_upper_bound}
to conclude that, for any $\lambda_{0}>1$,
\[
\limsup_{k\to\infty}h_{k}^{\partial}\le\frac{p^{2}}{q}\times2\range\times\lambda_{0}e^{-\lambda_{0}},
\]
and taking $\lambda_{0}$ to infinity shows that $h_{k}^{\partial}\xrightarrow{k\to\infty}0$.

This is what we need in order to prove the theorem: since $\mu$ is
stationary, $h_{k}^{\partial}=\frac{1}{2}\sum_{x}\alpha_{k}^{\mu}(x)$
is positive and increasing with $k$, therefore $h_{k}^{\partial}\xrightarrow{k\to\infty}0$
implies that $\alpha_{k}^{\mu}(x)=0$ for all $k$ and $x\in[-k-\range,k+\range]$.
That is,
\[
\sum_{\zeta\in\Omega_{k}}\left(\Gamma^{\mu}(x,\zeta)-\Gamma^{\mu}(x,\zeta^{x})\right)\log\left(\frac{\Gamma^{\mu}(x,\zeta)}{\Gamma^{\mu}(x,\zeta^{x})}\right)=0,
\]
and since all terms are positive, for any $\zeta\in\Omega_{k}$
\[
c_{x}(\zeta)\left(\mu(\zeta)\left(q\zeta_{x}+p(1-\zeta_{x})\right)-\mu(\zeta^{x})\left(q(1-\zeta_{x})+p\zeta_{x}\right)\right)\log\left(\frac{\mu(\zeta)\left(q\zeta_{x}+p(1-\zeta_{x})\right)}{\mu(\zeta^{x})\left(q(1-\zeta_{x})+p\zeta_{x}\right)}\right)=0.
\]

If $\mu(\zeta)=\mu(\zeta^{x})=0$ then equation (\ref{eq:detailed_balance})
is clearly satisfied. If $\mu(\zeta)=0\neq\mu(\zeta^{x})$, then $c_{x}(\zeta)$
must be equal $0$, and again equation (\ref{eq:detailed_balance})
is satisfied. If both $\mu(\zeta)$ and $\mu(\zeta^{x})$ are non-zero,
then $c_{x}(\zeta)=0$ or $\mu(\zeta)\left(q\zeta_{x}+p(1-\zeta_{x})\right)=\mu(\zeta^{x})\left(q(1-\zeta_{x})+p\zeta_{x}\right)$,
and also in this case equation (\ref{eq:detailed_balance}) is satisfied.
\qed

\begin{remark}
The proof of Theorem \ref{thm:1d_detailed_balance} relies on the
fact that the error term $\frac{p^{2}}{q}\times2\range\times\lambda_{0}e^{-\lambda_{0}}$
in Lemma \ref{lem:boundary_entropy_upper_bound} is uniform in the
size of the box, so we are able to take $\lambda_{0}$ to infinity
after $k$. In dimension $2$, we would get a term proportional to
$k$, requiring us to take $\lambda_{0}\approx\log(k)$ in order to
make it negligible. Unfortunately, an extra factor of $\log(k)$ multiplying
$\sum_{x}\beta_{\Lambda}^{\mu}(x)$ in the bound on $h_{\Lambda}^{\partial}(\mu)$
breaks down the proof of \cite{Liggett05}.
\end{remark}

\subsection{Proof of Lemma \ref{lem:internally_spanned}}

Fix $\eta\in\Omega_{N}$ for some finite $N$. We need to show that
$\mu(\eta)=\mu'(\eta)$.

Let $L>N$, $\zeta\in\Omega_{L}$, and $\psi\in\Omega_{\partial\Lambda_{L}}$,
where $\partial\Lambda_{L}=[-L-\range,L+\range]^{d}\setminus\Lambda_{L}$.
For $x\in\Lambda_{L}$, if $c_{x}(\psi\zeta)=1$ then by detailed
balance (\ref{eq:detailed_balance})
\[
\frac{\mu(\psi\zeta^{x})}{\mu'(\psi\zeta^{x})}=\frac{\mu(\psi\zeta)}{\mu'(\psi\zeta)}.
\]
More generally, for any $\zeta'\in\Omega_{L}$ such that $\bp{\zeta}{\Lambda_{L}}=\bp{\zeta'}{\Lambda_{L}}$,
\[
\frac{\mu(\psi\zeta')}{\mu'(\psi\zeta')}=\frac{\mu(\psi\zeta)}{\mu'(\psi\zeta)}.
\]
In particular, the ratio $\frac{\mu(\psi\zeta)}{\mu'(\psi\zeta)}$
is constant for all $\zeta$ such that $\bp{\zeta}{\Lambda_{L}}=\beta_{\Lambda_{L}}$,
i.e., 
\[
\mu(\psi\zeta)=Z(\psi,\beta,L)\ \mu'(\psi\zeta)\qquad\text{if }\bp{\zeta}{\Lambda_{L}}=\beta_{\Lambda_{L}}.
\]
If we now sum over $\psi$, since $\mu'$ is a product measure, we
obtain 
\[
\mu(\zeta)=\left(\sum_{\psi}\mu'(\psi)Z(\psi,\beta,L)\right)\mu'(\zeta)=Z(\beta,L)\ \mu'(\zeta),\qquad\text{if }\bp{\zeta}{\Lambda_{L}}=\beta_{\Lambda_{L}},
\]
with $Z(\beta,L)=\frac{\mu\left(\bp{\zeta}{\Lambda_{L}}=\beta_{\Lambda_{L}}\right)}{\mu'\left(\bp{\zeta}{\Lambda_{L}}=\beta_{\Lambda_{L}}\right)}$.

We are now ready to go back to our configuration $\eta$:
\begin{eqnarray*}
\mu(\eta) & = & \sum_{\substack{\zeta\in\Omega_{L}\\
\zeta_{\Lambda_{N}}=\eta
}
}\mu(\zeta)=\sum_{\substack{\bp{\zeta}{\Lambda_{L}}=\beta_{\Lambda_{L}}}
}\mu(\zeta)+\sum_{\substack{\bp{\zeta}{\Lambda_{L}}\neq\beta_{\Lambda_{L}}}
}\mu(\zeta)\\
 & = & Z(\beta,L)\sum_{\substack{\bp{\zeta}{\Lambda_{L}}=\beta_{\Lambda_{L}}}
}\mu'(\zeta)+\sum_{\substack{\bp{\zeta}{\Lambda_{L}}\neq\beta_{\Lambda_{L}}}
}\mu(\zeta)\\
 & = & Z(\beta,L)\mu'(\eta)+\sum_{\substack{\bp{\zeta}{\Lambda_{L}}\neq\beta_{\Lambda_{L}}}
}\left(\mu(\zeta)-Z(\beta,L)\mu'(\zeta)\right).
\end{eqnarray*}
Finally, since we assume $\mu\left(\bp{\zeta}{\Lambda_{L}}=\beta_{\Lambda_{L}}\right),\mu'\left(\bp{\zeta}{\Lambda_{L}}=\beta_{\Lambda_{L}}\right)\xrightarrow{L\to\infty}1$,
the normalization constant $Z(\beta,L)\xrightarrow{L\to\infty}1$
and the second term of the sum vanishes. We thus obtain $\mu(\eta)=\mu'(\eta)$,
concluding the proof of the lemma. \qed

\subsection{Proof of Theorem \ref{thm:emptiable_stationary_measure}}

By Lemma \ref{lem:internally_spanned}, it suffices to show that,
under the assumptions of the theorem, 
\begin{equation}\label{suffices}
\mu\left(\bp{\eta}{\Lambda_{N}}=\mathbf{0}_{\Lambda_{N}}\right)\xrightarrow{N\to\infty}1.
\end{equation}
We start with the following lemma, showing that, conditioned on the
ergodic component, $\mu$ can be replaced by $\pi$:
\begin{lemma}
For any event $E$ on $\Omega_{L}$,
\begin{equation}
\mu(E)=\sum_{\beta_{L}\in\mathcal{E}_{L}}\pi\left(E|\bp{\eta}{\Lambda_{L}}=\beta_{L}\right)\mu\left(\bp{\eta}{\Lambda_{L}}=\beta_{L}\right).
\end{equation}
\end{lemma}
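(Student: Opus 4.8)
The plan is to combine detailed balance with the observation that the events $A_{\beta_L}:=\{\eta:\bp{\eta}{\Lambda_L}=\beta_L\}$, $\beta_L\in\mathcal{E}_L$, depend only on $\eta_{\Lambda_L}$ and form a finite partition of $\Omega_L$ (the bootstrap percolation run inside $\Lambda_L$ with filled boundary sends every configuration to a unique stable one). By the law of total probability $\mu(E)=\sum_{\beta_L\in\mathcal{E}_L}\mu(E\cap A_{\beta_L})$, so it suffices to prove $\mu(E\cap A_{\beta_L})=\pi(E\mid A_{\beta_L})\,\mu(A_{\beta_L})$ for each $\beta_L$: when $\mu(A_{\beta_L})=0$ both sides vanish (here $\pi(A_{\beta_L})>0$ since $\pi$ has full support, so the conditional probability is well defined), and otherwise this is exactly the assertion that, conditioned on the ergodic component relative to $\Lambda_L$, the measure $\mu$ coincides with $\pi$.

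I would prove this by the same mechanism as in the proof of Lemma~\ref{lem:internally_spanned}, now with the product measure $\mu'$ taken to be $\pi$. Fix $L$, set $\Lambda'_L=[-L-\range,L+\range]^d$, $\partial\Lambda_L=\Lambda'_L\setminus\Lambda_L$, and fix an outer configuration $\psi\in\Omega_{\partial\Lambda_L}$. For $\zeta\in\Omega_L$ let $R_\psi(\zeta)=\mu(\psi\cdot\zeta)/\pi(\psi\cdot\zeta)$, the ratio of the two marginals on $\Omega_{\Lambda'_L}$ (the denominator is strictly positive). For every $x\in\Lambda_L$ one has $x+[-\range,\range]^d\subseteq\Lambda'_L$, so detailed balance~\eqref{eq:detailed_balance} on the box $\Lambda'_L$ — valid for $\mu$ by hypothesis and for $\pi$ by reversibility — gives $R_\psi(\zeta^x)=R_\psi(\zeta)$ whenever the flip at $x$ is legal, i.e.\ $c_x(\psi\cdot\zeta)=1$. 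Since two configurations on $\Lambda_L$ with the same restricted bootstrap closure are joined by a chain of legal flips inside $\Lambda_L$ (the finite-volume form of the ergodic-component characterization recalled above), and replacing the filled boundary by $\psi$ only adds legal moves, $R_\psi$ is constant on each $A_{\beta_L}$, say $R_\psi\equiv Z(\psi,\beta_L)$ there; equivalently $\mu(\psi\cdot\zeta)=Z(\psi,\beta_L)\,\pi(\psi\cdot\zeta)$ for $\zeta\in A_{\beta_L}$.

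Summing over $\psi$ and using that $\pi$ factorizes over $\partial\Lambda_L$ and $\Lambda_L$, one obtains $\mu_{\Lambda_L}(\zeta)=\bar Z(\beta_L)\,\pi_{\Lambda_L}(\zeta)$ for all $\zeta\in A_{\beta_L}$, with $\bar Z(\beta_L)=\sum_\psi Z(\psi,\beta_L)\,\pi_{\partial\Lambda_L}(\psi)$ independent of $\zeta$. Summing this identity over $\zeta\in A_{\beta_L}$ and using $\pi(A_{\beta_L})>0$ identifies $\bar Z(\beta_L)=\mu(A_{\beta_L})/\pi(A_{\beta_L})$, whence $\mu(E\cap A_{\beta_L})=\bar Z(\beta_L)\,\pi(E\cap A_{\beta_L})=\pi(E\mid A_{\beta_L})\,\mu(A_{\beta_L})$ for any $E\subseteq\Omega_L$. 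Substituting into the total-probability decomposition yields the claimed formula.

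The main obstacle is the one step hidden behind ``$R_\psi$ is constant on $A_{\beta_L}$'': it requires that configurations on $\Lambda_L$ with the same filled-boundary bootstrap closure are connected by legal flips inside $\Lambda_L$, and that a path legal for the filled-boundary dynamics remains legal once the boundary is switched to an arbitrary $\psi$ (both true for the monotone constraints at hand, and both used silently in the proof of Lemma~\ref{lem:internally_spanned}). Everything else — finiteness of the partition, positivity of $\pi$ making all ratios and conditional probabilities well defined, and the convention that zero-$\mu$-mass cells drop out of the final sum — is routine bookkeeping.
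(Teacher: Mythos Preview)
Your proof is correct and follows exactly the paper's approach: the paper's own proof is a two-line sketch that invokes the mechanism from the proof of Lemma~\ref{lem:internally_spanned} to conclude that $\mu(\eta)/\pi(\eta)$ is constant on each set $\{\bp{\eta}{\Lambda_L}=\beta_L\}$, and hence that $\mu(\cdot\mid\bp{\eta}{\Lambda_L}=\beta_L)=\pi(\cdot\mid\bp{\eta}{\Lambda_L}=\beta_L)$. You have simply unpacked that reference in full, including the monotonicity subtlety (filled boundary versus arbitrary $\psi$) that the paper leaves implicit.
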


\begin{proof}
Since any two configurations in $\left\{ \eta:\bp{\eta}{\Lambda_{L}}=\beta_{L}\right\} $
can be connected by a sequence of legal flips, the argument in the
proof of Theorem \ref{lem:internally_spanned} shows that $\frac{\mu(\eta)}{\pi(\eta)}$
is a constant over this set. That is, $\mu(\cdot|\bp{\eta}{\Lambda_{L}}=\beta_{L})=\pi(\cdot|\bp{\eta}{\Lambda_{L}}=\beta_{L})$,
which proves the lemma.
\end{proof}
We will also need the following lemma:
\begin{lemma}
Fix $L>N$ and $\beta_{L}\in\mathcal{E}_{L}$ vanishing on $\Lambda_{N}$
(i.e., $\beta_{\Lambda_{N}}=\underline \bzero_{\Lambda_{N}}$). Then 
\begin{equation}
\pi\left(\bp{\eta}{\Lambda_{N}}_{\Lambda_{N}}=\underline \bzero_{\Lambda_{N}}|\bp{\eta}{\Lambda_{L}}=\beta_{L}\right)\ge1-o_{N}(1),
\end{equation}
where $o_{N}(1)$ stands for a function of $N$ (not depending on
$L$) that tends to $0$ as $N$ tends to infinity.
\end{lemma}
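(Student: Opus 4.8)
The plan is to determine, for a fixed $\beta_L\in\mathcal E_L$ that vanishes on $\Lambda_N$, the conditional law of $\eta_{\Lambda_N}$ given $\{\bp{\eta}{\Lambda_L}=\beta_L\}$: I claim it is a product Bernoulli field conditioned on a \emph{decreasing} event, after which the bound follows at once from the Harris (FKG) correlation inequality together with the standing hypothesis of Theorem~\ref{thm:emptiable_stationary_measure}, namely $\pi(\bp{\eta}{\Lambda_N}_{\Lambda_N}=\underline\bzero_{\Lambda_N})\to1$.

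First I would set $Z:=\{x\in\Lambda_L:(\beta_L)_x=0\}$, the infected set of $\beta_L$, so that $\Lambda_N\subseteq Z$, and establish the factorization
\[
\{\bp{\eta}{\Lambda_L}=\beta_L\}=\{\eta_x=1\ \text{for all }x\in\Lambda_L\setminus Z\}\cap\{\bp{\eta}{Z}=\underline\bzero_Z\},
\]
where $\bp{\cdot}{Z}$ denotes bootstrap percolation inside $Z$ with filled boundary condition (it depends on $\eta$ only through $\eta_Z$). The inclusion $\subseteq$ is easy: bootstrap percolation only turns $1$'s into $0$'s and is monotone, so any $\eta$ with $\bp{\eta}{\Lambda_L}=\beta_L$ has $\eta_{\Lambda_L}\succeq\beta_L$, and the whole (decreasing) bootstrap trajectory stays $\succeq\beta_L$, hence identically $1$ on $\Lambda_L\setminus Z$; restricted to $Z$ the trajectory is therefore exactly $\bp{\cdot}{Z}$ run from $\eta$, and it converges to $\beta_L$ restricted to $Z$, that is to $\underline\bzero_Z$. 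For $\supseteq$ I would use that $\bp{\cdot}{\Lambda_L}$ does not depend on the order of updates: perform first all updates inside $Z$; since $\Lambda_L\setminus Z$ (and everything outside $\Lambda_L$) is frozen at $1$, this phase coincides with $\bp{\eta}{Z}$ and reaches $\underline\bzero_Z$, so the current configuration is precisely $\beta_L$; as $\beta_L\in\mathcal E_L$ is stable, no further update is possible and the dynamics halts at $\beta_L$.

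Granting the factorization, the two events on the right depend on disjoint sets of coordinates, so under $\pi$ conditioning on $\{\bp{\eta}{\Lambda_L}=\beta_L\}$ simply freezes $\eta\equiv1$ on $\Lambda_L\setminus Z$ and leaves $\eta_Z$ distributed as $\pi_Z(\,\cdot\mid\bp{\eta}{Z}=\underline\bzero_Z)$, with $\pi_Z$ the Bernoulli($p$) product measure on $Z$ (both conditionings have positive probability, being realised by $\eta_{\Lambda_L}=\beta_L$ and by $\eta_Z=\underline\bzero_Z$ respectively). Since $\{\bp{\eta}{\Lambda_N}_{\Lambda_N}=\underline\bzero_{\Lambda_N}\}$ depends only on $\eta_{\Lambda_N}$ and $\Lambda_N\subseteq Z$, writing $\cG:=\{\zeta\in\Omega_{\Lambda_N}:\bp{\zeta}{\Lambda_N}_{\Lambda_N}=\underline\bzero_{\Lambda_N}\}$ the quantity to bound equals $\pi_Z(\eta_{\Lambda_N}\in\cG\mid\bp{\eta}{Z}=\underline\bzero_Z)$. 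By monotonicity of bootstrap percolation, $\cG$ is stable under adding infections, so $\{\eta_{\Lambda_N}\notin\cG\}$ is an increasing event and $\{\bp{\eta}{Z}=\underline\bzero_Z\}$ a decreasing event for the coordinatewise order on $\{0,1\}^{Z}$ (recall that $0$ is the infected state); as $\pi_Z$ is a product measure, Harris' inequality makes them negatively correlated, whence
\[
\pi_Z\big(\eta_{\Lambda_N}\notin\cG\ \big|\ \bp{\eta}{Z}=\underline\bzero_Z\big)\le\pi_Z(\eta_{\Lambda_N}\notin\cG)=\pi\big(\bp{\eta}{\Lambda_N}_{\Lambda_N}\neq\underline\bzero_{\Lambda_N}\big)=:o_N(1),
\]
which tends to $0$ by hypothesis and is manifestly independent of $L$ and of $\beta_L$. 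This is the asserted estimate.

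I expect the only genuinely delicate point to be the inclusion $\supseteq$ in the factorization: one must verify that, once $Z$ has been emptied, the configuration is \emph{exactly} the stable configuration $\beta_L$, so that the order-independent bootstrap dynamics really halts there and does not go on to empty further sites of $\Lambda_L\setminus Z$. Everything downstream — the disjoint-coordinate decomposition of $\pi$ and the monotonicity/Harris argument — is routine.
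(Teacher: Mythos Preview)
Your proof is correct and rests on the same tool as the paper's, namely the FKG/Harris inequality for product Bernoulli measures. The paper's argument is shorter: it bypasses your factorization of $\{\bp{\eta}{\Lambda_L}=\beta_L\}$ entirely by conditioning on $(\eta_x)_{x\notin\Lambda_N}$ and noting that, with these coordinates frozen, both $\{\bp{\eta}{\Lambda_L}=\beta_L\}$ and $\{\bp{\eta}{\Lambda_N}_{\Lambda_N}=\underline\bzero_{\Lambda_N}\}$ are decreasing in $(\eta_x)_{x\in\Lambda_N}$; the former because $\beta_L$ is stable and vanishes on $\Lambda_N$, so lowering any $\eta_x$ with $x\in\Lambda_N$ preserves $\eta\ge\beta_L$ and hence the sandwich $\beta_L=\bp{\beta_L}{\Lambda_L}\le\bp{\eta}{\Lambda_L}\le\beta_L$. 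Your factorization via $Z$ is correct and arguably more informative (it pins down the conditional law of $\eta_Z$ exactly as $\pi_Z$ conditioned on $\{\bp{\eta}{Z}=\underline\bzero_Z\}$), but it is not needed for the bound; the paper's direct conditional-FKG saves you from having to justify order-independence of the bootstrap and the halting at $\beta_L$ in the $\supseteq$ inclusion.
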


\begin{proof}
Since $\beta_{L}$ vanishes on $\Lambda_{N}$, the event $\bp{\eta}{\Lambda_{L}}=\beta_{L}$
is decreasing in the variables $\left(\eta_{x}\right)_{x\in\Lambda_{N}}$.
The event $\bp{\eta}{\Lambda_{N}}_{\Lambda_{N}}=\underline \bzero_{\Lambda_{N}}$
is also decreasing in the same variables. Therefore, by the FKG inequality
and the product structure of $\pi$,
\begin{multline}
\pi\left(\bp{\eta}{\Lambda_{N}}_{\Lambda_{N}}=\underline \bzero_{\Lambda_{N}},\bp{\eta}{\Lambda_{L}}=\beta_{L}|\left(\eta_{x}\right)_{x\notin\Lambda_{N}}\right)\ge\\
\pi\left(\bp{\eta}{\Lambda_{N}}_{\Lambda_{N}}=\underline \bzero_{\Lambda_{N}}|\left(\eta_{x}\right)_{x\notin\Lambda_{N}}\right)\pi\left(\bp{\eta}{\Lambda_{L}}=\beta_{L}|\left(\eta_{x}\right)_{x\notin\Lambda_{N}}\right),
\end{multline}
and the lemma follows since the event $\bp{\eta}{\Lambda_{N}}_{\Lambda_{N}}=\underline \bzero_{\Lambda_{N}}$
does not depend on $\left(\eta_{x}\right)_{x\notin\Lambda_{N}}$,
and its probability tends to $1$.
\end{proof}
We can now use these two lemmas to prove \eqref{suffices}. For $L>N$,

\begin{eqnarray*}
\mu\left(\bp{\eta}{\Lambda_{N}}_{\Lambda_{N}}=\underline \bzero_{\Lambda_{N}}\right) & = & \sum_{\beta_{L}\in\mathcal{E}_{L}}\pi\left(\bp{\eta}{\Lambda_{N}}_{\Lambda_{N}}=\underline \bzero_{\Lambda_{N}}|\bp{\eta}{\Lambda_{L}}=\beta_{L}\right)\mu\left(\bp{\eta}{\Lambda_{L}}=\beta_{L}\right)\\
 & \ge & \sum_{\substack{\beta_{L}\in\mathcal{E_{L}}\\
\beta_{L}=0\text{ on }\Lambda_{N}
}
}(1-o_{N}(1))\mu\left(\bp{\eta}{\Lambda_{L}}=\beta_{L}\right)+0\\
 & \ge & \left(1-o_{N}(1)\right)\mu\left(\bp{\eta}{\Lambda_{L}}_{\Lambda_{N}}=\underline \bzero_{\Lambda_{N}}\right).
\end{eqnarray*}
The last probability tends, as $L\to\infty$, to $\mu\left(\bp{\eta}{}_{\Lambda_{N}}=\underline \bzero_{\Lambda_{N}}\right)=1$
since $\bp{\eta}{}=\underline \bzero$ $\mu$-a.s., so we conclude 
\begin{equation}
\mu\left(\bp{\eta}{\Lambda_{N}}_{\Lambda_{N}}=\underline \bzero_{\Lambda_{N}}\right)\xrightarrow{N\to\infty}1.
\end{equation}
The result now follows from Lemma \ref{lem:internally_spanned}.
\qed

\subsection{Proof of Lemma \ref{lem:externally_spanned}}

As in the proof of Lemma \ref{lem:internally_spanned}, we will prove
that $\mu(\eta)=\mu'(\eta)$ for any $\eta\in\Omega_{N}$. Note that
if $\eta_{x}<\beta_{x}$ for some $x$ then $\eta\notin E_{\beta}(\Lambda_{L},\Lambda_{N})$
for any $L$, hence its probability is $0$ for both $\mu$ and $\mu'$.
We will therefore assume $\eta\ge\beta$.

Let $L>N$, $\zeta\in E_{\beta}(\Lambda_{L},\Lambda_{N})$, and $\eta'\in\Omega_{N}$
such that $\eta'\ge\beta$. With some abuse of notation, we consider
$\zeta$ as a configuration on $\Lambda_{L}\setminus\Lambda_{N}$.
Then (since $\zeta$ externally spans $\Lambda_{N}$), $\zeta\eta$
and $\zeta\eta'$ are connected, i.e. $\bp{\zeta\eta}{\Lambda_{L}}=\bp{\zeta\eta'}{\Lambda_{L}}$.

Just as in the proof of Lemma \ref{lem:internally_spanned}, this
implies 
\[
\mu(\zeta\eta)=Z(\beta,\zeta)\mu'(\eta),\qquad\zeta\in E_{\beta}(\Lambda_{L},\Lambda_{N})\text{ and }\Omega_{N}\ni\eta\ge\beta,
\]
where $Z(\beta,\zeta)=\mu(\zeta)$. Then 
\begin{eqnarray*}
\mu(\eta) & = & \sum_{\zeta\in E_{\beta}(\Lambda_{L},\Lambda_{N})}\mu(\zeta\eta)+\sum_{\zeta\notin E_{\beta}(\Lambda_{L},\Lambda_{N})}\mu(\zeta\eta)\\
 & = & \mu'(\eta)+\sum_{\zeta\notin E_{\beta}(\Lambda_{L},\Lambda_{N})}(\mu(\zeta\eta)-\mu(\zeta)\mu'(\eta)).
\end{eqnarray*}
The second term vanishes as $L\to\infty$ by assumption, completing
the proof of the lemma. \qed

\subsection{Proof of Theorem \ref{thm:east_stationary_measures}}
Note first that, for the one-dimensional East model, the configurations
stable for the bootstrap configurations are given by $\mathcal{E}=(\beta^{(i)})_{i\in\Z\cup\{\pm\infty\}}$,
where:
\begin{equation}\label{eq:beta}
\beta_{x}^{(i)}=\begin{cases}
1 & \text{if }x<i,\\
0 & \text{if }x\ge i.
\end{cases}
\end{equation}
Hence each measure $\mu^{(i)}$ is a product measure supported on the
event $\bp{\eta}{}=\beta^{(i)}$.

Let $\nu^{(i)}=\mu\left(\cdot|\bp{\eta}{}=\beta^{(i)}\right)$, so $\mu=\sum_{i\in\Z\cup\{-\infty,\infty\}}\lambda_{i}\nu^{(i)}$
where $\lambda_{i}=\mu(\bp{\eta}{}=\beta^{(i)})$. We therefore need
to show that $\nu^{(i)}=\mu^{(i)}$ for all $i$. We note at this point
that since we condition a stationary measure over an invariant event,
$\nu^{(i)}$ is a stationary measure for all $i$ and by Theorem \ref{thm:1d_detailed_balance}
it satisfies detailed balance.

For $i=\infty,$ both $\nu^{(i)}$ and $\mu^{(i)}$ are supported on
the singleton $\left\{ \beta^{(\infty})\right\} =\{\mathbf{1}\}$,
hence they are equal.

 For $i\in\Z$, we apply Theorem \ref{lem:internally_spanned}: both
$\nu^{(i)}$ and $\mu^{(i)}$ satisfy detailed balance and are supported
on $\left\{ \bp{\eta}{}=\beta^{(i)}\right\} $, $\mu^{(i)}$ is a product
measure, and whenever $N>|i|$ we have $\mu^{(i)}\left(\bp{\eta}{\Lambda_{N}}=\beta_{\Lambda_{N}}\right)=\nu^{(i)}\left(\bp{\eta}{\Lambda_{N}}=\beta_{\Lambda_{N}}\right)=1$.

Finally, for $i=-\infty$ the equality follows from Lemma \ref{lem:externally_spanned}.
Indeed, $\beta^{(-\infty)}=\mathbf{0}$, so a configuration is externally
spanning $\L_N$ from $\L_L, L>N,$ only if it contains a vacancy inside $\L_L$ and to the left of $\Lambda_{N}$:
\begin{equation}
E_{\mathbf{0}}(\Lambda_{L},\Lambda_{N})=\{\eta(x)=0\text{ for some }x\in[-L,-(N+1)]\}.
\end{equation}
Therefore, $E_{\mathbf{0}}(\Lambda_{L},\Lambda_{N})\xrightarrow{L\to\infty}\{\eta(x)=0\text{ for some }x\le -(N+1)\}\supseteq\{\bp{\eta}{}=\mathbf{0}\}.$
\qed
\subsection{Proof of Theorem \ref{thm:d-East}} The statement follows immediately from the next  result. Fix a stable configuration $\b\in \cE$ and recall the definition of the measure $\pi_\b$. 
\begin{lemma} Let $\eta$ be such that $\text{BP}(\eta)=\b$. Then $\lim_{t\to +\infty}\bbP_\eta(\eta_t\in \cdot)=\pi_\b$.   \end{lemma}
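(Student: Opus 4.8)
The plan is to show that the dynamics started from $\eta$ with $\operatorname{BP}(\eta)=\beta$ converges to the measure $\pi_\beta$, which is the product measure $\pi$ conditioned on the invariant event $\{\operatorname{BP}(\cdot)=\beta\}$. The strategy has two parts: first, observe that the process started from $\eta$ never leaves the ergodic component $\mathcal C_\beta := \{\omega : \operatorname{BP}(\omega)=\beta\}$, because legal flips preserve $\operatorname{BP}$ (this is exactly Corollary 3.7 of \cite{HT} quoted above); second, show that restricted to $\mathcal C_\beta$ the process is ergodic with unique stationary measure $\pi_\beta$, so that the law at time $t$ converges to $\pi_\beta$ regardless of the starting point in $\mathcal C_\beta$.

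The first step I would carry out is the reduction to uniqueness of the stationary measure on $\mathcal C_\beta$. Concretely, I would observe that $\pi_\beta$ is reversible for the East dynamics (it is $\pi$ conditioned on an invariant set, and $\pi$ is reversible), so it is a stationary measure supported on $\mathcal C_\beta$. Using the structure of $\beta$ recorded in the excerpt --- its zeros form $\bigcup_{x\in I_\beta}\mathcal P_x^+$ --- I would split $\mathbb Z^d$ into the "frozen ones" region $R_1 := \mathbb Z^d \setminus \bigcup_{x\in I_\beta}(\mathcal P_x^+\cup\{x\})$ on which every configuration in $\mathcal C_\beta$ equals $1$, the "frozen zeros" $I_\beta$ on which every configuration equals $0$, and the remaining "free" region $R_f$. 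On $R_1$ and $I_\beta$ the coordinates never change; on $R_f$ the East constraint is always satisfied because each free site has, among its predecessor-neighbors, either a frozen $0$ or another free site that can be emptied --- so the restricted chain on $R_f$ behaves like an unconstrained (product-Bernoulli$(p)$) dynamics modulo the fact that a free site must stay able to "see" a vacancy, which is guaranteed precisely by $\operatorname{BP}=\beta$. Then I would invoke Theorem \ref{thm:emptiable_stationary_measure}, or rather its proof mechanism: the restricted KCM on $R_f$ with frozen $0$ boundary at $I_\beta$ is such that $\pi$-almost surely bootstrap percolation empties any finite box (since a single frozen vacancy propagates in the East cone), so by Lemma \ref{lem:internally_spanned} any reversible (equivalently, by Theorem \ref{thm:1d_detailed_balance} in $d=1$ and by Theorem \ref{thm:d-East}'s hypotheses in general, any stationary) measure supported on $\mathcal C_\beta$ must coincide with $\pi_\beta$.

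Having established that $\pi_\beta$ is the unique stationary measure supported on $\mathcal C_\beta$, the final step is the passage from uniqueness of the stationary measure to convergence $\bbP_\eta(\eta_t\in\cdot)\to\pi_\beta$. Here I would use the standard fact that for a Feller Markov process on a compact state space (the Skorokhod framework set up in the paper), every subsequential weak limit of the Cesàro averages $\frac1t\int_0^t \bbP_\eta(\eta_s\in\cdot)\,ds$ is a stationary measure; since the only such measure supported on the closed invariant set $\mathcal C_\beta$ (which contains the orbit of $\eta$) is $\pi_\beta$, the Cesàro averages converge to $\pi_\beta$. To upgrade Cesàro convergence to genuine convergence I would exploit reversibility: by the spectral theorem for the self-adjoint semigroup $P_t=e^{t\mathcal L}$ on $\mathrm L^2(\pi_\beta)$, for any local $f$ the map $t\mapsto \bbP_\eta(f(\eta_t))$ is, up to the projection onto constants, a Laplace transform of a positive measure, hence if its Cesàro average converges then it converges; alternatively one cites that an ergodic reversible Markov semigroup converges in law from every starting point. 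I expect the \textbf{main obstacle} to be the careful verification that the restricted dynamics on the free region $R_f$ is genuinely irreducible and that $\operatorname{BP}_{\Lambda_N}$ of a $\pi_\beta$-typical configuration agrees with $\beta$ on $\Lambda_N$ with probability tending to $1$ --- i.e. checking the hypotheses of Lemma \ref{lem:internally_spanned} in the multidimensional East geometry, where the cones $\mathcal P_x^+$ can be complicated; the convergence-from-Cesàro step is comparatively soft given reversibility and compactness.
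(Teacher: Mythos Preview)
Your proposal has a genuine circularity problem. The lemma you are proving is precisely the device the paper uses to \emph{establish} Theorem~\ref{thm:d-East}, i.e.\ to characterize all stationary measures of the $d$-dimensional East model. In your argument you need that the only stationary measure supported on $\mathcal C_\beta$ is $\pi_\beta$; to get this from Lemma~\ref{lem:internally_spanned} you require detailed balance, which in dimension $d\ge 2$ is only available for translation-invariant stationary measures (Holley), and the paper explicitly remarks that the $d=1$ entropy-production proof of Theorem~\ref{thm:1d_detailed_balance} breaks in $d\ge 2$. So your step ``any stationary measure on $\mathcal C_\beta$ equals $\pi_\beta$'' is exactly the content of Theorem~\ref{thm:d-East} and cannot be assumed here. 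Separately, your upgrade from Ces\`aro to genuine convergence via the spectral theorem does not go through: $(P_tf)(\eta)$ is not an $L^2(\pi_\beta)$ inner product with $\delta_\eta$ when $\pi_\beta$ is non-atomic (which it is whenever the free region is infinite), and the fallback ``an ergodic reversible semigroup converges in law from every starting point'' is simply false on infinite state spaces.

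The paper's proof is much more direct and exploits the one structural feature of the East model that you do not use: \emph{orientation makes the marginal on suitable finite sets an autonomous Markov chain}. Concretely, for a local $f$ one picks a finite $I\subset I_\beta$ and $M$ so that the support of $f$ lies in $\Lambda=\bigcup_{x\in I}\mathcal P_x^+\cap\{\|x\|_\infty\le M\}$; since East constraints look only into $\mathcal P_x^-$, the projection of $\eta(t)$ to $\Omega_\Lambda$ evolves as the \emph{finite} East chain on $\Lambda$ with empty boundary at $I$ and healthy boundary elsewhere. That finite chain is irreducible and reversible w.r.t.\ $\pi_\Lambda$, hence $(P_tf)(\eta)\to\pi(f)$ immediately. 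The case $\beta=\mathbf 0$ (where no finite $I$ suffices) is handled by citing the known convergence result for the East model started with a vacancy dominating the support of $f$. No stationary-measure machinery is needed at all.
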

\begin{proof}
Fix $\b,\eta$ as in the lemma. If $\b=\mathbf 1,$ the result is obvious. If $\b=\mathbf 0$ there exists a sequence $\{x_n\}_{n=1}^\infty$ such that $\eta(x_n)=0$ and $\cup_n \cP_{x_n}^+=\bbZ^d$. Fix a local function $f.$ A little thought shows that the support of $f$ is contained in $\cP_{x_n}^+$ for some $n$. Using $\eta(x_n)=0,$ we can appeal to \cite{Chleboun2015} to conclude that $\lim_{t\to +\infty}\bbE_\eta\big(f(\eta)\big)= \pi(f)$. Suppose now that $\b$ is different from $\mathbf 0,\mathbf 1$ and recall the definition of $I_\b\subset (\bbZ\cup \{\pm \infty\})^d$. For simplicity we assume that $I_\b\subset \bbZ^d$, the general case being covered by the sequel combined with the argument used for $\b=\mathbf 0$. By construction $I$ is infected in $\b$, whereas $\cup_{x\in I}\cP^-_x$ is healthy. Necessarily, the same is true for $\eta$. Fix a function $f$ depending on finitely many variables in $\cup_{x\in I_\b}\cP^+_x,$ and choose  a finite subset $I$ of $I_\b$ and $M>0$ such that $\L=\cup_{x\in I}\cP^+_x \cap \{x\in \bbZ^d: \|x\|_\infty \le M\}$ contains the support of $f$. Because of the orientation of the East constraints, one immediately verifies that the marginal in $\O_\L$ of the law of the East process with initial condition $\eta$ coincides with the law of the East chain in $\O_\L$ with empty boundary condition at the vertices of $I$ and healthy elsewhere, and initial condition $\eta_\L$. Such a chain is ergodic, with reversible measure $\pi_\L,$ and therefore $\lim_{t\to +\infty}\bbE_\eta\big(f(\eta_t)\big)= \pi(f)$. 
\end{proof}


\section{Results for the East polluted FA1\(\rm f\) model}
\label{sec:poll}
In this section, we consider the East polluted FA-1f  model (see Section \ref{sec:East polluted}). Recall that we assume that there are infinitely many vertices of type East to the left and to the right of the origin. Note first that the set $\mathcal E$ of stable configurations
under the bootstrap transformation consists of the configurations $\mathbf 1, \mathbf 0,$ and of those configurations which, for a given $x\sim E$, are infected exactly at any $y\ge x$ and at $\{x-k,x-k+1,\dots, x-1\}$ if these sites form a maximal sequence of sites of type F to the left of $x$.  The same proof of Theorem \ref{thm:east_stationary_measures} gives that any stationary measure is the convex combination of $\pi$, the measure $\d_{\mathbf 1}$, and the following collection of probability measures $\hat \mu^{(i)}, i\sim E$. Let $j(i)=\max\{k\ge 0: i-1,\dots, i-k \text{ are all of type F}\}$. Then $\hat \mu^{(i)}$ is the product measure of the Dirac mass on the singleton all healthy sites to the left of $i-k$ and $\pi$ to the right of $i-k-1$ conditioned to have at least one infection in $[i-k,\dots,i]$ . 

Let $\nu$ be a probability measure on $\Omega$ and let $\nu_t$ be the law of the process at time $t>0$ when the initial distribution is $\nu$. 
\begin{theorem}
\label{thm:East polluted}
Assume that $\nu$-a.s. there exist infinitely many infected sites to the left of the origin. Then $\lim_{t\to \infty}\nu_t= \pi$.     
\end{theorem}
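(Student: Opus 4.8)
The plan is to prove the equivalent statement that $\nu_t(f)\to\pi(f)$ for every function $f$ depending on finitely many sites, say on $\Lambda_n:=\{-n,\dots,n\}$.

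\emph{Reduction to the component $\{\bp{\eta}{}=\mathbf 0\}$.} First I would note that a single vacancy at a site $x$ empties, under bootstrap percolation, the whole half-line $\{x,x+1,\dots\}$: the constraint at the right neighbour of a vacancy is automatically satisfied both at an East site (which only asks for a vacant left neighbour) and at an FA-1f site (which only asks for one vacant neighbour), so one proceeds by induction on the distance to $x$. Hence the hypothesis on $\nu$---vacancies arbitrarily far to the left, $\nu$-a.s.---gives $\bp{\eta}{}=\mathbf 0$ $\nu$-a.s. The event $\{\bp{\eta}{}=\mathbf 0\}$ is invariant under the dynamics, so $\nu_t$ is supported on it for every $t$; and by the classification of the stationary measures of the East-polluted model recalled above (same argument as in Theorem~\ref{thm:east_stationary_measures}: convex combinations of $\pi$, $\delta_{\mathbf 1}$ and the $\hat\mu^{(i)}$, the last two charging other bootstrap components), $\pi$ is the only stationary measure supported on this component. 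This both singles out $\pi$ as the only possible limit and---crucially for the rest---guarantees that at every later time there are still vacancies arbitrarily far to the left.

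\emph{Transporting a vacancy and equilibrating the window.} Fix $\varepsilon>0$ and a large buffer box $\Lambda_N\supseteq\Lambda_n$. By finite speed of propagation of the graphical construction one can localise the problem; the point is that the relaxation time will be governed by $N$, not by the (growing) space--time region on which $f(\eta_t)$ depends, thanks to the East mechanism. Indeed, the East sites to the left let one \emph{grow vacancies rightward}: a vacancy at a site legalises turning its right neighbour into a vacancy, so from any configuration having a vacancy in $[-N-K,-N]$ there is a legally admissible sequence of $O(K)$ flips producing a configuration that is empty on all of $[-N-K,-N]$, hence on a left neighbourhood of $\Lambda_N$. Since $\nu$-a.s.\ (and, by the first step, at every time) vacancies occur arbitrarily far to the left, for $K=K(\varepsilon)$ large the required initial configuration has probability at least $1-\varepsilon$, and the graphical dynamics performs the transporting flips within a bounded time with probability bounded below. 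Once $\Lambda_N$ has been emptied from the left, the restriction of the process to $\Lambda_N$ is irreducible and reversible with respect to $\pi_{\Lambda_N}$, so its marginal on $\Lambda_n$ relaxes to $\pi$ at a rate depending only on $N$ and $\varepsilon$, the exterior entering only through a finite-speed error controlled by taking $N$ large. Reading each such occurrence as a ``refresh'' of the window, and using that a vacancy keeps being available to the left to trigger the next one, the window is refreshed arbitrarily many times by time $t$; letting $\varepsilon\downarrow0$ (hence $N,K\to\infty$) after $t\to\infty$ yields $\nu_t(f)\to\pi(f)$.

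\emph{Main obstacle.} The hard part is to make the preceding paragraph quantitative, and it is exactly where the pure FA-1f argument breaks down: the process has no genuine vacant boundary (the vacancy feeding $\Lambda_N$ from the left may be refilled), and, the model being non-attractive, neither censoring nor monotone couplings are available. One must genuinely exploit the oriented East half-line to the left, showing that $\nu$-almost surely the East bootstrap dynamics can keep re-supplying a vacancy at the left edge of $\Lambda_N$ over a long time interval; decomposing $[0,t]$ into refresh cycles each succeeding with probability bounded below given this reservoir, and bounding the accumulated error, is the core of the proof. The out-of-equilibrium analysis of the East model (\cite{Chleboun2015}, \cite[Chapter~7]{HT}) is the natural template.
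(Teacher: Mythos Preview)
Your first reduction step is correct and useful: under the hypothesis every configuration lies in the ergodic component $\{\bp{\eta}{}=\mathbf 0\}$, and $\pi$ is the unique stationary measure there. But after that your proof is only a programme, and you say so yourself (``the hard part is to make the preceding paragraph quantitative''). The refresh-cycle picture---transport a vacancy to the left edge of $\Lambda_N$, wait for local equilibration, repeat---is exactly the kind of argument that breaks on non-attractive KCMs: you have no control on what happens to the reservoir vacancy between cycles, no monotone coupling to compare with a boundary-driven chain, and no a priori bound on how long the ``refresh'' takes once the vacancy has been delivered. As written, the argument is circular: you need the law near $\Lambda_N$ to be close to equilibrium in order to guarantee the refresh succeeds, which is what you are trying to prove.

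The paper sidesteps this entirely by a short soft argument with two ingredients you did not use. First, a \emph{persistence lemma}: if the origin is of type East and $x^{(n)}$ denotes the $n$-th East site to its left, then for any initial $\eta$ with a vacancy in $(x^{(1)},0]$ and any $t>0$,
\[
\bbP_\eta\big(\eta_y(t)=1\ \forall y\in[x^{(n)},0]\big)\le p^n.
\]
The proof is a one-page argument in the graphical construction: for the interval $[x^{(n)},0]$ to be healthy at time $t$, each site $x^{(k)}+1$ must have had a \emph{last} legal ring before $t$ with $x^{(k)}$ infected and coin toss equal to $1$; since $x^{(k)}$ is East, the coin tosses in $[x^{(k)}+1,\infty)$ do not affect the process on $(-\infty,x^{(k)}]$, so that coin toss is independent of everything determining the times of those last legal rings, and one peels off a factor $p$ at each East site. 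Second, by \cite[Theorem~1]{Mountford95} every subsequential limit of $\nu_t$ is stationary; the persistence bound gives $\mu(\eta\equiv 1\text{ on }[x^{(n)},0])\le p^n$ for any limit point $\mu$, which together with the classification of stationary measures forces $\mu=\pi$. No quantitative relaxation estimate on finite windows is needed.
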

\begin{proof}
We begin with a result about the persistence of vacancies. W.l.o.g., we assume that the origin is of type East, and we write 
$x^{(n)}$ for the $n^{th}$-East site to the left of the origin.
\begin{lemma}
\label{lem:0bis}
Let
\[
p_n = \sup_{x^{(1)}<x\le 0}\sup_{\eta:\,\eta_x=0}\bbP_{\eta}(\eta_{y}(t)=1\ \forall \, y\in [x^{(n)},0]).
\]
Then 
$p_n\le p^n$.
\end{lemma}
\begin{proof}
Let us fix
$t>0, x\in [x^{(1)}+1,0]$ and an initial  configuration $\eta$ such that $\eta(x)=0$. Recall the graphical construction, denote the rings of the Poisson clock at a generic site $y$ by $\{t_{y,i}\}_{i=1}^\infty$, and for each such ring denote by  $\xi_{y,i}\in\{0,1\}$ the result of the corresponding coin toss. By construction the variables $\{\xi_{y,i}\}_{y\in \bbZ,i\in \bbN}$ are i.i.d. Bernoulli(p). 

Define now $\alpha^{(k)}$ as the \emph{last} legal ring before time $t$ at $x^{(k)}+1$ with an infection at $x^{(k)}$. More formally,  
\[
\alpha^{(k)} = \max\left \{i:\ t_{x^{(k)}+1,i}<t \text{ and } \eta_{x^{(k)}}(t_{x^{(k)}+1,i}) = 0 \right \},
\]
with the convention that $\max \emptyset = 0$.
We now observe that having at least one infection at $x\in (x^{(1)},0]$ at time $t=0$ and no infection at time $t$ in the interval $[x^{(n)},0]$ implies that each of the sites $x^{(1)}+1,\dots,x^{(n)}+1$ must have had at least one legal ring before $t$ occurring with an infection at their left neighbor and with the corresponding coin toss equal to one. Indeed, if for some $k$ there is no such ring, the infection cannot leave the interval $[x^{(k)},0]$. In other words, $\alpha^{(k)}\ge 1$ and $\xi_{x^{(k)}+1,\a^{(k)}}=1$ for $k=1,\dots,n$. Therefore,
\[
p_n \le \sup_{x^{(1)}<x\le 0}\sup_{\eta:\,\eta_x=0} \sum_{i_1=1}^\infty \dots \sum_{i_n=1}^{\infty}\bbE_\eta (\prod_{k=1}^n\bone_{\{\alpha^{(k)}=i_k\}}\, \xi_{x^{(k)}+1,i_k}).
\]
Since $x^{(1)}$ is an East site, the coin tosses in $[x^{(1)}+1,\infty)$ do not affect the process on $(-\infty,x^{(1)}]$. In particular $\xi_{x^{(1)}+1,i_1}$ is independent of $\{\alpha^{(k)}=i_k\}_{k=1}^n$. Therefore,
\[
\sum_{i_1=1}^\infty \dots \sum_{i_n=1}^{\infty}\bbE_\eta (\prod_{k=1}^n\bone_{\{\tau^{(k)}=i_k\}}\, \xi_{x^{(k)}+1,i_k}) \le p \times \sum_{i_2=1}^\infty \dots \sum_{i_n=1}^{\infty}\bbE_\eta( \prod_{k=2}^n\bone_{\{\tau^{(k)}=i_k\}}\, \xi_{x^{(k)}+1,i_k}),
\]
where we used $\sum_{i_1=1}^{\infty}\bone_{\alpha^{(1)}=i_1} \le 1$. Iterating the argument, we get 
\[
\sum_{i_1=1}^\infty \dots \sum_{i_n=1}^{\infty}\bbE_\eta (\prod_{k=1}^n\bone_{\{\tau^{(k)}=i_k\}}\, \xi_{x^{(k)}+1,i_k})\le p^n, 
\]
finishing the proof.
\end{proof}

Choose now a configuration $\eta$ with infinitely many infected sites to the left of the origin and let $z$ be one of them. W.l.o.g., we can assume that  the first vertex of type East greater than or equal to $z$ is the origin. Thanks to the lemma, any limit point $\mu$  of the law of $\eta_t$ as $t\to \infty$ satisfies  
\[
\mu(\eta_x=1\ \forall x\in [x^{(n)},0])\le p^n.
\]
By \cite[Theorem 1]{Mountford95} $\mu$ is stationary, hence using the arbitrariness of $z$ and $n$ together with the description of the stationary measures,  we conclude that $\mu$ coincides with $\pi$.    
\end{proof}

\section{Results for the $\d$-West process}\label{sec:delta}
The generator of the $\d$-West process can be written as $\cL=\cL^{E}+\d \cL^{W}, \d\in (0,1),$ where 
\begin{align*}
\cL^Ef(\eta) &=\sum_{ x\in \bbZ}(1-\eta_{x-1}) \left(\pi_x(f)-f\right),\\
\cL^Wf(\eta) &=\sum_{ x\in \bbZ}(1-\eta_{x+1}) \left(\pi_x(f)-f\right).
\end{align*}
As for the FA-1f  process, any stationary measure is the convex combination of $\d_{\mathbf 1}$ and $\pi$.  Our main result reads as follows. 
\begin{theorem}
    \label{thm:delta West}
For any $q\in (0,1)$  there exists $0<\d_0<1$ such that the following holds. Let $\nu$ be the initial law of the process, which is either a non-trivial product Bernoulli measure or, for some constant $M>0$, it is supported on configurations $\eta$ with at least one infection every $M$ vertices.  Then, for any local function   
$
\lim_{t\to \infty}\bbE_\nu(f(\eta(t))=\pi(f) 
$
and the convergence is exponentially fast.
\end{theorem}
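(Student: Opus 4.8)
The plan is to treat the $\delta$-West process, for $\delta$ small, as a perturbation of the East process, whose non-equilibrium behaviour is completely understood for every $q\in(0,1)$, and to transfer the latter. Write $\cL=\cL^{E}+\delta\,\cL^{W}$ as in the statement. A first, purely soft, remark is that the spectral gap is bounded below \emph{uniformly in $\delta\ge 0$}: since $\cD(f)=\cD^{E}(f)+\delta\,\cD^{W}(f)\ge\cD^{E}(f)\ge\mathrm{gap}_{\mathrm{East}}\,\var_{\pi}(f)$, with $\mathrm{gap}_{\mathrm{East}}>0$ the (classical) positive gap of the one-dimensional East process, one gets $\mathrm{gap}_{\delta\text{-West}}\ge\mathrm{gap}_{\mathrm{East}}>0$. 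This only controls $\mathrm{L}^{2}(\pi)$ initial data, so the restriction $\delta<\delta_{0}(q)$ does not enter here; it must enter through the non-equilibrium part of the argument, exactly where the \emph{orientation} of the East constraint is used and where the unoriented case $\delta=1$ (i.e.\ BABP) is conjecturally out of reach for small $q$.

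Next I would reduce the Bernoulli initial law to a deterministic configuration. If $\nu$ is a non-trivial product measure, then for every fixed $M$ the event $G_{M}=\{\text{every window of $M$ consecutive sites is infected}\}$ has $\nu(G_{M}^{c}\text{ inside a fixed finite window})$ exponentially small in the window size; combined with the finite-speed estimate below (which restricts the influence on a local observable to a window of size of order $t$), the contribution of the failure of $G_{M}$ is at most $\|f\|_{\infty}$ times a quantity $\le Ce^{-c't}$. It therefore suffices to prove the exponential bound starting from an arbitrary \emph{configuration} $\eta\in G_{M}$, with all constants allowed to depend on $M$. So fix a local $f$ supported in $\Lambda_{0}$ and a configuration $\eta$ with an infection in every window of $M$ sites.

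The core step localises and then compares with East. A finite-speed-of-propagation estimate for the KCM (valid for any constraint with bounded rates) shows that $f(\eta(t))$ is determined by the graphical construction inside $\Lambda_{vt}\times[0,t]$ outside an event of probability $\le Ce^{-c't}$, with $v=v(q)$; inside $\Lambda_{vt}$ the restricted chain with the frozen boundary condition $\eta_{\Lambda_{vt}^{c}}$ is ergodic with reversible measure $\pi_{\Lambda_{vt}}$, because $\eta\in G_{M}$ supplies infections near both endpoints. Since the East constraint queries only the left neighbour, the relaxation of $\Lambda_{0}$ is driven by the closest infection to its left, which under the East dynamics sweeps rightwards and thermalises $\Lambda_{0}$ within a ($q$- and $M$-dependent but $t$-independent) time; the known exponential relaxation of the East process from such initial data then yields $|\bbE^{\mathrm{East}}_{\eta}(f(\eta(t)))-\pi(f)|\le Ce^{-ct}$, uniformly in $t$ and in the box size. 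The $\delta$-West dynamics differs from the East one only through the West clock rings, which fire at the small rate $\delta$ per site; coupling the two via the common East clocks and coins, a West ring at $x$ is irrelevant to the event under study unless it heals the infection that is currently driving the local relaxation, an occurrence of rate $O(\delta)$; taking $\delta_{0}(q)$ small enough, this does not happen throughout a sufficiently long window with probability $\ge 1-Ce^{-ct}$, and on the complementary event one uses the trivial bound together with the fact that, by a persistence-of-vacancies mechanism (as in Lemma~\ref{lem:0bis}, adapted to the present oriented bulk) and the classification of stationary measures in Corollary~\ref{cor:stationary measures}, no mass escapes to the fully healthy configuration. Finally the uniform gap $\mathrm{gap}_{\delta\text{-West}}\ge\mathrm{gap}_{\mathrm{East}}$ glues the regimes before and after the local relaxation time into a single bound $|\bbE_{\nu}(f(\eta(t)))-\pi(f)|\le Ce^{-ct}$ valid for all $t$.

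The hard part will be the perturbative control of the West clock rings \emph{uniformly in time}. Unlike in the East-polluted model of Section~\ref{sec:poll}, there is no sparse set of privileged sites: over a horizon $[0,t]$ one typically sees $\Theta(\delta vt^{2})$ West rings inside the relevant space--time window, so one cannot simply require that there be none, and a naive Duhamel expansion around the East semigroup produces an error of order $\delta t^{2}$, useless for large $t$. What must be established is that, although West rings keep occurring forever, they almost never obstruct the perpetual re-thermalisation of $\Lambda_{0}$ by infections arriving from the left; this genuinely exploits the structure of the East mechanism rather than a soft comparison, and it is here that the numerical value of $\delta_{0}(q)$ is pinned down. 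Making the argument robust against the absence of monotonicity — no censoring and no monotone coupling are available — is the crux.
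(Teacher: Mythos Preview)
Your proposal correctly identifies the setup (finite speed of propagation, reduction to deterministic initial data with a vacancy in every $M$-window, uniform spectral gap via comparison with East) and, crucially, correctly identifies the obstruction: over the relevant space--time window $\Lambda_{vt}\times[0,t]$ there are $\Theta(\delta t^{2})$ West rings, so neither a ``no West ring'' event nor a Duhamel expansion can work. But you then stop precisely at the hard part. The final two paragraphs are a description of what must be true, not a mechanism that makes it true; phrases like ``almost never obstruct the perpetual re-thermalisation'' and ``this genuinely exploits the structure of the East mechanism'' are not an argument. Since the model is non-attractive, the coupling you sketch (common East clocks and coins, treat West rings as perturbations) has no monotonicity to lean on, and you give no substitute.

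The paper's proof supplies two concrete ingredients that are absent from your proposal. First, \emph{persistence of vacancies} is obtained not by forbidding West rings globally but by an oriented-percolation construction: one declares a space--time block $(i,j)\in\bbZ\times\ell\bbN$ ``good'' if the West clock at $i$ does not ring during $[j\ell,(j+1)\ell)$, chooses $\ell\sim\kappa/\delta$ so that good blocks percolate, and builds from good oriented paths a family of ``NE-barriers'' and ``NW-gates'' along which no West ring occurs. A vacancy trapped between a good NE-barrier and $n$ nested good NW-gates can only escape by a specific sequence of East updates whose coin tosses are independent of everything to the left, giving probability $\le p^{n}$ (this is the analogue of Lemma~\ref{lem:0bis}, but with random barriers replacing the fixed East sites). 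For $\delta$ small enough the barriers are numerous and have slope $\Theta(1/\delta)$, so $\Theta(\e t)$ of them fit in a window of width $\e t$, yielding persistence of each initial vacancy in its $\e t$-block up to time $t$ with probability $1-e^{-c\e t}$. Second, this persistence is \emph{not} fed into an East comparison; instead one passes to the chain \emph{restricted} to configurations with a vacancy in every $\e t$-block and proves that its logarithmic Sobolev constant is $O(\e t)$ (via tensorisation over blocks and the enlargement trick, using the East spectral gap), which is small enough to beat the $\Theta(t)$ entropy of a worst-case starting point. The combination of these two inputs is exactly the BCRT scheme; your outline contains neither.
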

\begin{proof}
The proof follows the general scheme adopted in \cite[Section 3]{BCRT} for the FA-1f  process with $q > 1/2$, which we summarize briefly here into three main steps.  In the sequel, for simplicity, we assume that the local function is simply $f(\eta)=\eta_0$ and that $\nu$ is concentrated on a single configuration $\eta$ that satisfies the theorem's requirement.     

{\it Step 1}. Using finite speed of propagation $\bbE_\eta(f(\eta_t))$ is approximated by the same average for the finite $\d$-West chain in the interval $\L_t=[-2t,2t]\cap \bbZ$ with infected boundary conditions at $\pm (2t+1)$ and initial condition the restriction of $\eta$ to $\L_t$.  

{\it Step 2}. Given a small positive constant $\e$, $\L_t$ is partitioned into consecutive intervals $\L_i$, each one with $\e t$ vertices. One then defines $\hat \O_{\L_t}$ as the set of those configurations in $\{0,1\}^{\L_t}$ that have at least one infection in each interval $\L_i,$ and one compares the average of $f(\eta_t)$ for the $\d$-West chain in $\L_t$ to the same average w.r.t. $\d$-West chain in $\L_t$ \emph{restricted} to $\hat \O_{\L_t}$. The assumption on $\eta$ ensures that, for $t$ large enough, each interval $\L_i$ is infected at time $t=0$. Hence, using the key Corollary \ref{cor:persistence}, for $\d$ small enough and some constant $c>0$,  with probability $1-O(e^{-c t})$ the unrestricted chain will coincide with the restricted one up to time $t$.

{\it Step 3}. In the last step, for $\e$ small enough, one successfully compares the average of $f(\eta_t)$ for the restricted chain to its equilibrium value using bounds on the logarithmic Sobolev constant of the restricted chain. Here, the key input (cf. Lemma \ref{lem:logsob}) is that the growth of the  logarithmic Sobolev constant of the restricted chain as $t\to \infty$ is $O(\e t),$ to be compared with the $\Theta(t)$ growth for the \emph{unrestricted } chain.
\vskip 0.3cm 
\noindent
We now turn to the analysis of the two key inputs that are needed for the above approach to be successful:  (i) persistence of initial infections and (ii) a bound on the logarithmic Sobolev constant of the restricted chain. 
For (i), it is convenient to consider the following graphical construction of the $\d$-West process. 
    
    Each vertex carries two independent Poisson clocks, an East clock and a West clock, of rate one and $\d$ respectively. As usual, independence is assumed through the lattice $\Z$, and when the East clock at $x$ rings, the site $x-1$ is queried. Iff $x-1$ is infected, then the ring is called \emph{legal} and the state of $x$ is resampled from the Bernoulli(p) measure. Similarly, for the West clock rings, but in this case, the queried site is $x+1$.
In the sequel we will denote by $\{t^E_{x,i}\}_{i=1}^\infty,\, \{t^W_{x,i}\}_{i=1}^\infty$ the rings of the East and West clock at $x$ respectively, and by $\{\xi^E_{x,i}\}_{i=1}^\infty,\, \{\xi^W_{x,i}\}_{i=1}^\infty$ the corresponding Bernoulli(p)-variables. \begin{definition}\ 
\label{def:1}
\begin{enumerate}[(a)]
    \item Consider the space-time $S=\bbZ \times [0,+\infty)$ and fix an integer $\ell$. A broken line $\G^+$ in $\bbR\times [0,+\infty)$ connecting  $\bbZ\times 0$ to $\bbZ\times t$ by going only North or East (up or right) and with corners, if any, occurring only at the vertices of $\bbZ\times \ell \bbN$ is called a \emph{NE-barrier}. Similarly, we define a \emph{NW-gate} $\G^-$. We write  $\G^\pm(0)\in \bbZ$ for the starting point of $\G^\pm$ and  $\G^\pm(s), s>0, s\notin \ell\bbN, $ for the spatial coordinate of the point $\G^\pm\cap \{\bbZ\times s\}\in S$. If $s$ is a multiple of $\ell$ we let $\G^\pm(s):=\lim_{s'\to s^-}\G^\pm(s)$. 
    \item Given two NE-barrier  $\G^+_1,\G^+_2$ we say that $\G^+_2$ is to the right of $\G^+_1$ if $\G^+_2(s)>\G^+_1(s)$ for any $s\le t$. Similarly for the NW-gates. 
    \item Finally, a NE-barrier or a NW-gate is good if there is no West clock ring along its vertical parts. 
\end{enumerate}
\end{definition}
\begin{remark}
\label{rem:def}\  
\begin{enumerate}[(a)]
    \item For a.a. realizations of the Poisson clocks in the interval $[0,t]$, there exist infinitely many  good NE-barriers and good NW-gates connecting $\bbZ\times 0$ to $\bbZ\times t$. Moreover, NE-barriers and NW-gates can be horizontal only at times multiple of $\ell$. Hence, a.s. any ring of the Poisson clocks along them can only occur inside their open vertical segments. 
    \item Suppose that $I=i\times (k_1\ell,k_2\ell),$ with $k_1<k_2\in \bbN$, is a (open) vertical stretch of a NE-barrier or of a NW-gate between two consecutive turns. In order that there is no West clock ring along $I$ it is sufficient that all points $\{(i,j), j=k_1,\dots,k_2-1\}$ are good, where  $(i,j)\in \bbZ\times \bbN$ is \emph{good} iff the West clock at $i$ does not ring during the time interval $[j\ell,(j+1)\ell)$. Clearly 
the events $\{(i,j)\text{ is good})\}_{(i,j)\in \bbZ\times \bbN}$ are independent and  $\bbP((i,j)\text{ is good})=e^{-\d \ell}$.    
\end{enumerate}
\end{remark}
The main properties of good NE-barriers and NW-gates are summarized in the next claim.
\begin{claim}\ 
\label{claim:1} 
\begin{enumerate}[(1)]
    \item Suppose that we have a good NE-barrier $\G^+$ starting at time $t=0$ at some positive site and a good NW-gate $\G^-$ starting at some negative site. If at time $t=0$ the origin is infected and at time $t\notin \ell \bbN$ there is no infection in $\{\G^-(t)+1,\dots, \G^+(t)\}$ then there exists $s<t, s\notin \ell \bbN,$ such that: 
\begin{enumerate}[(i)]
    \item $s$ is a East-ring for $\G^-(s)+1$ with coin toss variable equal to one;
    \item the sites $\G^-(s)+2,\dots ,\G^+(s)$ are healthy while the sites $\G^-(s)$ and $\G^-(s)+1$ are infected.
\end{enumerate}
\item Let $\G^-$ be a good NW-gate starting at $t=0$ and assume to know all the clock rings together with their corresponding coin tosses in the  region $\cup_{0\le s\le t}\, \left[\{x\le \G^-(s)\}\times s\right]$. Then, given $\{\eta_x(0)\}_{x\le \G^-(0)},$ the variables $\cup_{s\le t} \cup_{x\le \G^-(s)}\{\eta_x(s)\}$ are known.   
\end{enumerate}
\end{claim}
\begin{proof}[Proof of Claim \ref{claim:1}]\ 
\begin{enumerate}
    \item For simplicity write $\L(s)=[\G^-(s)+1,\G^+(s)]\cap \bbZ$ and observe that $\G^+(s)>0$ and $\G^-(s)<0$ for all $s\ge 0$. 
Let $
\t= \inf\{s\in [0,t): \eta(s)\restriction_{\L(s)}=1\}.
$
By construction, at time $\t$ there is a unique vacancy in $\L(\t)$ at either $\G^+(\t)$ or $\G^-(\t)+1$  which disappears because of a legal ring at its location. We can exclude the first option because a ring at $\G^+(\t)$ can only be an East ring, which, in order to be legal, needs a vacancy at $\G^+(\t)-1\in \L(\t)$.
\item It follows immediately from the fact that there are no West rings on $\G^-$ and no rings at times multiple of $\ell$.   
\end{enumerate}
\end{proof}
Write now $\G^+$ for the leftmost good NE-barrier starting to the right of the origin, $\G^-_1$ for the rightmost good NW-gate starting to the left of the origin, $\G^-_2$ for the rightmost good  NW-gate to the left of $\G^-_1$, etc. For any $s\in [0,t]$ also let  $\L_n(s)=\{\G_n^-(s)+1,\dots,\G^+(s)\}$.
\begin{lemma}
    \label{lem:2}
Fix $\eta$ such that $\eta_0=0$ and call $\cH_n(t)$ the event that $\L_n(t)$ is healthy. Then $\bbP_\eta(\cH_n(t))\le p^n$.
\end{lemma}
\begin{proof}[Proof of the lemma]
In analogy with the proof of Lemma \ref{lem:0bis}, we set 
\[
T_k=\max\{s\le t:\  \eta_{\G^-_k(s)}(s)=0\text{ and }s=t_{\G^-_k(s)+1,i}\text{ for some $i$}\},
\]
and call $\g_k=\G^-_k(T_k)$ and $\a_k$ the label $i$ of the clock ring at $\g_k+1$ such that $T_k=t_{\g_k+1,i}$. 
The same arguments used to prove part 1 of the previous claim, show that if $\L_1(0)$ is infected while $\L_n(t)$ is healthy, then necessarily $\{T_k,\g_k,\a_k\}_{k=1}^n$ are well defined and the corresponding coin tosses $\{\xi_{\g_k+1,\a_k}\}_{k=1}^n$ are all equal to one. 

Let $\cF_t$ be the $\sigma$-algebra generated by all the Poisson clocks up to time $t$. By construction, a.s.
\[
\bbP_\eta(\cH_n(t)|\cF_t)\le \sum_{y_n<\dots< y_1=-1}^\infty\dots \sum_{i_1,\dots, i_n=1}^\infty\bbE_\eta(\prod_{k=1}^n\bone_{\{\g_k=y_k\}}\bone_{\{\a_k=i_k\}}\xi_{y_k+1,i_k}|\cF_t).
\]
Since there is no West ring along $\G^-_1$, the coin tosses associated to any clock ring in the space-time region $\cup_{s=0}^t [[\G^-_1(s)+1),\infty)\times s]$ do not affect the evolution in the region $\cup_{s\le t} [(-\infty,\G^-_1(s)]\times s.$ Hence the variable $\xi_{y_1+1,i_1}$ is independent of the variables $\{\g_k,\a_k\}_{k=1}^n$ and
\begin{align*}
    &\sum_{y_n<\dots< y_1=-1}^\infty\dots \sum_{i_1,\dots, i_n=1}^\infty\bbE_\eta(\prod_{k=1}^n\bone_{\{\g_k=y_k\}}\bone_{\{\a_k=i_k\}}\xi_{y_k+1,i_k}|\cF_t)\\
    \le &\sum_{y_n<\dots< y_2=-1}^\infty\dots \sum_{i_2,\dots, i_n=1}^\infty\bbE_\eta(\prod_{k=2}^n\bone_{\{\g_k=y_k\}}\bone_{\{\a_k=i_k\}}\xi_{y_k+1,i_k}|\cF_t)\times p.
\end{align*}
By iterating the above argument, we conclude that, a.s., $\bbP(\cH_n(t)|\cF(t))\le p^n$. 
\end{proof}
We now exploit the lemma to show that for $\d$ small enough, an initial infection at the origin is likely to survive not too far from the origin. 
\begin{lemma}
 \label{lem:3}
For any $\epsilon>0$ there exists $\d_0\in (0,1)$ and $c>0$ such that for any $0<\d\le \d_0$ and any $t>0$
\[
\sup_{\eta:\,\eta_0=0}\bbP_\eta([-\epsilon t,\e t] \text{ is healthy at time $t$ })\le e^{-c\e t}.
\]
\end{lemma}
\begin{proof}[Proof of the lemma]
Fix $\kappa>0$ in such a way that oriented (NE or NW) site percolation in $\bbZ\times \bbN$ (cf. e.g. \cite{Durrett}) with parameter $e^{-\kappa}$ is supercritical. In particular (see e.g. \cite[Proposition 3.1]{KobAndersen}), for any $n$ large enough with probability greater than $1-e^{-\Theta(n)}$ there exist $\Theta(n)$ disjoint occupied NE-oriented paths connecting $\bbZ\times 0$ to $\bbZ\times n$ and  all contained in $\cup_{j=0}^n \{j,j+1,\dots,j+n\}$. 

Recall now part (b) of Remark \ref{rem:def}. If we choose the free parameter $\ell$ of Definition \ref{def:1} equal to $\lfloor \kappa/\d\rfloor$, we get that the good sites of $\bbZ\times \ell\bbN\subset S$ perform a supercritical oriented percolation with slope $\Theta(\kappa/\d)$. Given an oriented (NE or NW) good path $\g$ in  $\bbZ\times \ell\bbN\subset S$ connecting $\bbZ\times 0$ to $\bbZ\times N$, we can form a good (NE-barrier or NW-gate) by joining one vertex of $\g$ to the next one in increasing order w.r.t. the time coordinate. Notice that in this construction of a good NE-barrier or of a good NW-gate, also the points lying on a horizontal part of the line are good.  

In conclusion, if we choose $\d<O(\kappa \e)$ we get that there exist positive constants $\lambda$ and $c$ depending only on $\kappa$ such that with probability greater that $1-e^{-c\e t}$ there exists at least $\l \e t$ good NW-gates in the space-time window $[-\e t,0]\times [0,t]$ and one good NE-barrier in the space-time window $[0,\e t]\times [0,t]$ joining $\bbZ\times 0$ to $\bbZ\times t$. The proof is finished using Lemma \ref{lem:2} with $n=\lceil \l\e t\rceil$.   
\end{proof}
\begin{corollary}
\label{cor:persistence}
    In the same assumption of Lemma \ref{lem:3} there exists $c'>0$ such that for any $t>0$  
 \[
\sup_{\eta:\,\eta_0=0}\bbP_\eta(\exists s\le t: \, [-\epsilon t,\e t] \text{ is healthy at time $s$ })\le e^{-c'et}.
\]   
\end{corollary}
\begin{proof}[Proof of the corollary]
It follows immediately from a union bound over the possible Poisson clocks rings in $[-\e t,\e t]\times [0,t],$ standard large deviations bounds for the Poisson process, and the argument given in the proof of Lemma \ref{lem:3}.       
\end{proof}
We now turn to the second key input behind the approach described at the beginning of the proof.

Fix $\ell\in \bbN$ and let $\L_N$ be the union of the first $N$  consecutive discrete intervals $\L_i=\{i(\ell+1)-\ell, i(\ell+1)\}$. Let  $\hat\O_{N,\ell}$ consists of those configurations of $ \{0,1\}^{\L_N}$ that have at least one infection in each interval $\L_i, i=1,\dots,N$. We then consider the East+$\d$-West chain in $\L_N$  restricted to $\hat\O_{N,\ell}$ and with infected boundary conditions at the origin and at $x=N(L\ell+1)+1$. This restricted  chain is ergodic and reversible w.r.t. $\hat \pi_{\L_N}(\cdot)=\otimes_i \hat \pi_{\L_i},\ \hat\pi_{\L_i}(\cdot):=\pi_{\L_i}(\cdot |\L_i \text{ infected})$, with a positive spectral gap and a finite logarithmic Sobolev constant (see \cite{Diaconis} and \cite[Section 3.1]{BCRT}). The key point is that the logarithmic Sobolev constant is $O(\ell)$. More precisely,
\begin{lemma}
\label{lem:logsob}
There exists a constant $c=c(q)$ such that for any $\d\ge 0$ and any $N$, the logarithmic Sobolev constant of the restricted chain is bounded from above by $c\ell$.   \end{lemma}
\begin{proof}[Proof of the lemma]
We first recall the following result \cite[Proposition 3.10]{Martinelli99}. For any discrete probability measure $\mu$ with  finite $\mu^*:= \min_\eta \mu(\eta)$ it holds that, for any $f$, 
\begin{align}
\label{eq:aux}   {\rm Ent}_\mu(f^2)&:= \mu(f^2\log(f^2))- \mu(f^2)\log(\mu(f^2)))\nonumber \\
   &\le 2(4+2|\log(\mu^*)|)\var_\mu(f).
\end{align}
Using the fact that $\hat \pi_{\L_N}= \otimes_i\hat \pi_{\L_i}$ 
it  follows immediately from \eqref{eq:aux} that for any $f:\O_{N,\ell}\mapsto \bbR$
\[
{\rm Ent}_{\hat \pi_{\L_N}}(f^2)\le c_q \ell \sum_i \hat \pi_{\L_N}\big(\var_{\hat \pi_{\L_i}}(f)\big).
\]
For each interval $\L_i, \; i\ge 2,$ the previous block $\L_{i-1}$ has a leftmost infection. The required infection for the interval $\L_1$ is simply represented by the infected boundary condition at the origin.  We can then use the standard enlargement trick \cite[Section 4.2.2.2 equation 4.13]{HT} to extend $\var_{\hat \pi_{\L_i}}(f)$ to a larger interval whose left boundary borders the leftmost infection in $\L_{i-1}$. For the enlarged variance we can now use the Poincar\'e inequality and the spectral gap bound for the East model (i.e. with $\d=0$) to conclude that the r.h.s above is bounded from above by $c_q \times  \ell \times \gap_{\rm East}^{-1}\times \hat \cD_{\L_N}(f)$, where $\hat \cD_{\L_N}(f)$ is the restricted chain Dirichlet form of $f$.  
\end{proof}
The proof of the theorem is complete. 
\end{proof}

\section{Results for BABP}
\label{sec:BABP}
All the results concerning BABP are based on a remarkable self-duality property and on its quasi-duality with the \emph{Double Flip Process (DFP)} \cite{Loyd-Sudbury97}, an interacting particle system in which the state of each \emph{edge of $\bbZ$}, independently across $\bbZ$, is updated according to the rule: 
\begin{align*}
(0,0)\rightarrow (1,1) & \text{ with rate } (\sqrt{1+\lambda}+1)^2/2, \\
(1,1)\rightarrow (0,0) & \text{ with rate }  (\sqrt{1+\lambda}-1)^2/2,  \\
(1,0)\leftrightarrow (0,1) & \text{ with rate }  \lambda/2,  
\end{align*}
where $\l=q/p$. 
\begin{remark}
\label{rem:parity}
If one considers DFP on $\{1,\dots,n\}$ it has two ergodic components because the parity of the number of infections is preserved.    
\end{remark}
It is easy to check that DFP is reversible w.r.t. product Bernoulli measure $\hat \pi$ of parameter $\hat p=\frac{\sqrt{1+\l}+1}{2\sqrt{1+\l}},$ with $\hat q:=1-\hat p\simeq q/4$ as $q\to 0$. To avoid confusion, we will use $\bbE_\eta(\cdot)$ and $\hat \bbE_\eta(\cdot)$ in the sequel to denote the averages with respect to the BABP and the DFP, respectively, with initial state $\eta$. 
Our main result for DFP is the following.
\begin{theorem}
    \label{thm:DFP}
   For any $\l>0$, there exists  $m>0$, and for any function $f$ depending on the state of finitely many vertices, there exists a constant $C_f>0$ such that
\begin{equation}
\label{eq:DFP0}
\sup_\eta |\hat \bbE_\eta(f(\eta(t)))-\hat\pi(f)|\le C_fe^{-mt}. 
\end{equation} 
\end{theorem}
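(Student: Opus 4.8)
The plan is to exploit the algebraic duality that singles out the DFP inside the Lloyd--Sudbury family of edge-flip dynamics: the precise values of the rates (and of $\hat p$) are exactly those for which DFP admits a clean duality with a \emph{finite} dual particle system on $\bbZ$, and the exponential decay will come entirely from a positive ``mass gap'' (spontaneous killing rate) of that dual. First I would fix constants $c_{0}\neq c_{1}$, normalised so that $\hat p\, c_{1}+\hat q\, c_{0}=1$, set $D(\xi,\eta)=\prod_{x\in\xi}\bigl(c_{1}\eta_{x}+c_{0}(1-\eta_{x})\bigr)$ for finite $\xi\subset\bbZ$, and check on the two-site edge generator that $\hat\bbE_{\eta}\bigl[D(\xi,\eta(t))\bigr]=\bbE^{\mathrm{d}}_{\xi}\bigl[D(\xi(t),\eta)\bigr]$, where $\xi(t)$ is the dual process — a system of particles that perform symmetric random walks, interact upon meeting (merging or annihilating), and die spontaneously. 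Since $c_{0}\neq c_{1}$, the functions $\{D(\xi,\cdot)\}_{\xi}$ form a basis of the local functions, with $D(\emptyset,\cdot)\equiv1$ and $\hat\pi\bigl(D(\xi,\cdot)\bigr)=1$ for every $\xi$; hence any local $f$ can be written as a \emph{finite} sum $f=\sum_{\xi}\widehat f(\xi)\,D(\xi,\cdot)$ and, consequently, $f-\hat\pi(f)=\sum_{\xi\neq\emptyset}\widehat f(\xi)\bigl(D(\xi,\cdot)-1\bigr)$.

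Feeding this into the duality relation yields the finite sum $\hat\bbE_{\eta}\bigl(f(\eta(t))\bigr)-\hat\pi(f)=\sum_{\xi\neq\emptyset}\widehat f(\xi)\,\bbE^{\mathrm{d}}_{\xi}\bigl[D(\xi(t),\eta)-1\bigr]$. On the event $\{\xi(t)=\emptyset\}$ the integrand vanishes, while on its complement $\bigl|D(\xi(t),\eta)-1\bigr|\le \bigl(\max(|c_{0}|,|c_{1}|)\bigr)^{|\xi|}+1$ uniformly in $\eta$, since the dual does not create particles and so $|\xi(t)|\le|\xi|$. The key dynamical input is then that every dual particle is killed at a rate at least $\kappa=\kappa(\lambda)>0$ and that $|\xi(t)|$ is non-increasing, so $\bbP^{\mathrm{d}}_{\xi}(\xi(t)\neq\emptyset)\le Q_{|\xi|}(t)\,e^{-\kappa t}$ for some polynomial $Q_{|\xi|}$. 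Combining the two bounds gives $\bigl|\bbE^{\mathrm{d}}_{\xi}[D(\xi(t),\eta)-1]\bigr|\le C_{|\xi|}\,e^{-mt}$ uniformly in $\eta$ for any fixed $m<\kappa$, and summing over the finitely many $\xi$ in the support of $\widehat f$ produces \eqref{eq:DFP0} with $C_{f}=\sum_{\xi\neq\emptyset}|\widehat f(\xi)|\,C_{|\xi|}$.

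The heart of the matter — and the step I expect to be the main obstacle — is pinning down the correct dual dynamics and, above all, verifying that the $(0,0)\leftrightarrow(1,1)$ moves translate on the dual side into a genuine \emph{spontaneous killing} of single particles at a rate bounded below for \emph{every} $\lambda>0$; this $\lambda$-uniform positivity is exactly what makes the argument work for all $q$, in contrast with FA-1f, where the analogous dual object is only subcritical when $q$ is large. Should the natural dual also contain a branching mechanism, the extinction estimate must be upgraded to a second-moment/subcriticality bound showing that $\bbE^{\mathrm{d}}_{\xi}\bigl[\bigl(\max(|c_{0}|,|c_{1}|)\bigr)^{|\xi(t)|}\id_{\xi(t)\neq\emptyset}\bigr]$ still decays exponentially, using that the DFP parameters lie strictly inside the subcritical region for all $\lambda$; by contrast, the uniformity in $\eta$ is automatic, since the duality function takes only the two bounded values $c_{0},c_{1}$ per site.
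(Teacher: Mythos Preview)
Your route is genuinely different from the paper's, and the difference is worth spelling out. The paper never invokes a particle dual for DFP at all. Instead it proceeds analytically: reduce to the box $\Lambda_t=[-Mt,Mt]$ by finite speed of propagation; observe that in finite volume DFP preserves the parity of the number of infections, so one works with $\hat\pi^\star=\hat\pi(\,\cdot\mid\cP=\star)$; and then prove (Proposition~\ref{prop:sob}) that the logarithmic Sobolev constant of DFP on $\{1,\dots,n\}$ restricted to a parity sector is bounded uniformly in $n$ and $\star$. That uniform log-Sobolev bound is obtained by a bisection/quasi-factorisation of entropy argument \`a la Cesi--Martinelli: the weak dependence between the two halves comes from the elementary fact that $\bbP(\mathrm{Bin}(m,\hat p)\text{ even})=\tfrac12+\tfrac12(1-2\hat p)^m$, so the parity constraint decorrelates exponentially across an overlap of width $n^{1/3}$. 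Hypercontractivity then converts the uniform log-Sobolev constant into \eqref{eq:DFP0}. No algebraic miracle is needed, only reversibility w.r.t.\ a product measure and the finite-range structure.

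By contrast your argument stands or falls on a structural claim you have not verified: that DFP is dual, via $D(\xi,\eta)=\prod_{x\in\xi}(c_1\eta_x+c_0(1-\eta_x))$, to a \emph{bona fide} Markov particle system with strictly positive spontaneous death rate. If one takes the most natural normalisation $c_1=-c_0=y=\sqrt{1+\lambda}$ (so that $\hat p c_1+\hat q c_0=1$) and computes $\hat\cL_e D(\{x\},\cdot)$ for the edge $e=(x,x+1)$, one finds
\[
\hat\cL_e D(\{x\},\cdot)=y^2 D(\emptyset,\cdot)-y^2 D(\{x\},\cdot)-D(\{x+1\},\cdot)+D(\{x,x+1\},\cdot),
\]
i.e.\ the ``dual'' jumps $\{x\}\to\{x+1\}$ at rate $-1$. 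So with this choice the dual is only a signed operator, and your extinction bound $\bbP^{\mathrm d}_\xi(\xi(t)\neq\emptyset)\le Q_{|\xi|}(t)e^{-\kappa t}$ has no probabilistic meaning; you would need instead an operator-norm estimate on a non-Markovian semigroup, which is a different (and harder) problem. It is conceivable that some other one-parameter choice of $(c_0,c_1)$ yields non-negative rates and a net positive killing minus branching for every $\lambda>0$, but that computation is precisely the ``main obstacle'' you flag, and without it the proposal is a strategy rather than a proof. The paper's log-Sobolev approach sidesteps this entirely.
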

Before proving the theorem,  we first present the self-duality of BABP and its quasi-duality with DFP, and show how to combine them to derive two interesting consequences for BABP. We emphasize that the main ideas presented here originate from \cite{Sudbury99,Loyd-Sudbury97}, and that our primary new contribution is the proof of exponential ergodicity for DFP in $\bbZ$.   

Call $y=\sqrt{\lambda+1}$, and fix three subsets $B,B',D$ of $\bbZ$. We consider three independent processes $\{B(t)\}_{t\ge 0}, \{B'(t)\}_{t\ge 0},$ and $\{D(t)\}_{t\ge 0}$ in $\bbZ$ defined as follows. The set $B(t)$ represents the set of infections at time $t$ of BABP with $B(0)=B$. Similarly for $B'(t)$ with $B$ replaced by $B'$. The set $D(t)$ represents instead the set of infections at time $t$ of DFP with  $D(t)=D$. The initial sets $ B,B',D$ could be random according to a certain joint law, and the symbol $\bbE(\cdot)$ will denote the global average over the three processes including their initial randomness.  

If $|B|$ denotes the cardinality of the set $B$,  then
the \emph{self-duality} of BABP and the \emph{quasi-duality} between BABP and DFP read as follows. 
\begin{align}
 \label{eq-duality0}
  \bbE\big((-\frac{1}{\lambda})^{|B'(t)\cap B|}\big) &= \bbE\big((-\frac{1}{\lambda})^{|B(t)\cap B'|}\big), \\
    \label{eq-quasiduality}
\bbE\Big(\big(\frac{1}{y+1}\big)^{|D\cap B(t)|}\big(\frac{-1}{y-1}\big)^{|D^c\cap B(t)|}\Big)&= \bbE\Big(  \big(\frac{1}{y+1}\big)^{|B\cap D(t)|}\big(\frac{-1}{y-1}\big)^{|B\cap D^c(t)|}\Big),
\end{align}
whenever the l.h.s and the r.h.s. make sense. 
\begin{remark}
\label{rem:conv}It is not difficult to verify that, if the r.h.s. of \eqref{eq-duality0} tends to zero as $t\to \infty$ for any finite set $B',$ then the law of $ B(t)$ tends to $\pi$ as $t\to \infty$.  
\end{remark} 
\begin{application}[Linear growth of $B(t)$ starting from finitely many infections]
  Take for simplicity $B=\{0\}$ (any finite set would do as well). Finite speed of propagation implies that a.s. $|B(t)|\le O(t)$ as $t\to \infty$. 
If $D=\bbZ,$ then \eqref{eq-quasiduality} together with Theorem \ref{thm:DFP} implies that
    \begin{align*}
        \bbE\big(\big(\frac{1}{y+1}\big)^{ |B(t)|}\big)&= \big(\frac{1}{y+1}+\frac{1}{y-1}\big)\bbP(D(t)\ni \{0\}) -\frac{1}{y-1}\\
        &=\frac{1}{y-1}\Big(\frac{2y}{y+1}\bbP(D(t)\ni \{0\})-1\Big)= O(e^{-m t}).
    \end{align*}
   In particular, 
    \[
    \bbP(|B(t)| \le \delta mt)\le O(e^{-m t/2}),
    \]
   for some $\delta\in (0,1)$ small enough.  The a.s. linear growth in $t$ of $|B(t)|$ as $t\to \infty$ follows easily.    
\begin{remark}
In \cite[Corollary 2.3]{Mountford93}, it was proved that if $\l>1/3,$ then the BABP process starting from finitely many infections converges to $\pi$. The condition $\l> 1/3$ was later improved to $\l>0.0347$ in \cite[Theorem 7]{Sudbury99}.  
\end{remark}  
\end{application}
\begin{application}[Convergence of BABP to $\pi$ when the initial measure is a Bernoulli product measure]  
Let $B$ be a finite set, let $\eta'$ be distributed according to $\nu_\alpha$, the Bernoulli product measure with parameter $\alpha,$ and let $B'$ be the set of infections of $\eta'$. We distinguish between two cases for $\alpha$.
\begin{enumerate}
    \item $0<1-\alpha< \frac{2\lambda}{\lambda +1}$. In this case self-duality \eqref{eq-duality0} gives
\begin{equation}
\label{eq:dual1}
\bbE\big((-\frac{1}{\lambda})^{|B'(t)\cap B|}\big)= \bbE\big((\alpha -(1-\alpha)/\lambda)^{|B(t)|}\big).    
\end{equation}
Notice that $|\alpha-(1-\alpha)/\lambda|<1$ so that 
a.s. the r.h.s. of \eqref{eq:dual1} decreases exponentially fast in $t$ because $|B(t)|$ grows a.s. linearly in $t$. 
Using Remark \ref{rem:conv}, we conclude that the law of BABP with initial measure $\nu_{\a}$ converges to $\pi$ exponentially fast. 
\item  $\frac{2\lambda}{\lambda +1}\le 1-\alpha\le 1$. 
Let $\beta=\frac 12\big(1+\a y  \big).$ Then $\beta \in [\frac 12,1]$ and  
$
\alpha -(1-\alpha)/\lambda=\frac{\beta}{y+1}-\frac{1-\beta}{y-1}.
$
Thus, if $D\sim \nu_\beta,$ the r.h.s. of \eqref{eq-duality0} coincides with the l.h.s. of \eqref{eq-quasiduality}. On the other hand, the r.h.s. of \eqref{eq-quasiduality} is $O(e^{-mt})$ for some constant $m>0$ because of Theorem \ref{thm:DFP}. In conclusion, the l.h.s. of \eqref{eq-duality0} is  exponentially small in $t$ and the conclusion is as in the previous case.
\end{enumerate} 
\end{application}
\begin{remark}\ 
The fact that BABP started from $\nu_\alpha, \ \alpha\le 1/y,$ converges exponentially fast to $\pi$ could also be derived from the quasi-thinning relation between BABP and DFP \cite{Loyd-Sudbury97}. In this case, the law of BABP at time $t$ can be mapped into the law of DFP at time $t$ with initial distribution $\nu_{1-\beta}$. The same conclusion holds if we start BABP from an inhomogeneous Bernoulli product measure $\otimes_x \nu_{\alpha_x}$ with $\alpha_x\le 1/y$ for all $x$.  
\end{remark}
\subsection{Proof of Theorem \ref{thm:DFP}}
To fix ideas one can imagine that the local function $f$ is just the state of the origin, $\eta_0$. If $\L=\L_t=[-M t,Mt],$ finite speed of propagation implies that we can choose $M=M(\l)>0$ such that, uniformly in $\eta$, 
\[
\hat\bbE_\eta(f(\eta(t)))= \hat \bbE_{\eta^\L}(f(\eta_\L(t)))
+O(e^{-t}), 
\]
where $\eta^\L(t)$ is the DFP continuous time chain in $\L$. In the sequel, for lightness of notation, we will drop the superscript $\L$ from the notation, and we will write $\hat P_t$ for the semigroup of the DFP in $\L$. 
We now analyze $\hat \bbE_{\eta^\L}(f(\eta_\L(t))\equiv (\hat P_tf)(\eta)$. 

As explained in Remark \ref{rem:parity}, the parity $\cP(\eta)\in \{+,-\}$  of a configuration $\eta$, i.e., whether it has an even or odd number of infections, is preserved by the DFP chain. In particular, if $\hat\pi^\star$ denote the product Bernoulli($\hat p$) measure in $\L$ conditioned to $\cP=\star$, then $(\hat P_t f)(\eta)$ will converge to $\hat \pi^\star$ as $t\to \infty$ for all $\eta$ with $\cP(\eta)=\star$. It is easy to verify that there exists $c=c(\hat p)>0$ such that for all $t>0$ 
\[
|\hat\pi^+(f)-\hat\pi^-(f)|\le O(e^{-c t}),
\]
and therefore the theorem follows if we can prove that there exists $m>0$ such that, for all $t$ large enough,
\begin{equation}
    \label{eq:DFP1}
    \max_{\star}\max_{\eta\sim \star} |(\hat P_t f)(\eta)-\hat\pi^\star(f)|\le e^{-mt}.
\end{equation}
The key to proving \eqref{eq:DFP1} is the following result, whose proof is postponed to the end of the section. 

Consider the DFP in the interval $I_n=\{1,\dots,n\}$ restricted to configurations with $\cP=\star$ and with reversible measure  $\hat \pi^\star_n,$ the product Bernoulli$(\hat p)$ measure in $I_n$ conditioned to $\cP=\star$. Write $c_{sob}(n,\star)$ for its logarithmic Sobolev constant, i.e. the best constant $c$ in the Logarithmic Sobolev inequality
\[
\ent_{\hat \pi^\star_n}(f^2)\le c \cD^\star_n(f,f),\quad \forall f:\O_{I_n}^\star\mapsto \bbR,
\]
where $ \ent_{\hat \pi^\star_n}(f^2)= \hat\pi_n^\star(f^2\log(\frac{f^2}{\hat \pi^\star_n(f^2)}))$,  $\O_{I_n}^\star=\{\eta\in \O_{I_n}:\ \cP(\eta)=\star\},$ and $\cD^\star_n(f,f)$ is the Dirichlet form of the chain.
\begin{proposition}
\label{prop:sob}
$c_{sob}:= \sup_{n,\star} c_{sob}(n,\star)<+\infty.$
\end{proposition}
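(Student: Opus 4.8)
\emph{Overall plan.} The plan is to remove the (global, $\bbZ/2$--valued) parity constraint by an explicit change of variables, after which the restricted DFP becomes a plain single--site Glauber dynamics for a one--dimensional nearest--neighbour Gibbs measure at fixed ``temperature'', for which a uniform logarithmic Sobolev bound is classical. Concretely, for $\eta\in\O_{I_n}^\star$ I set, for $1\le i\le n-1$,
\[
\zeta_i\ :=\ \sum_{j=1}^{i}(1-\eta_j)\ \bmod 2,\qquad \zeta_0:=0,\quad \zeta_n:=\star,
\]
(identifying $\star\in\{+,-\}$ with $\{0,1\}$). Since $\sum_{i=1}^n(\zeta_{i-1}\oplus\zeta_i)\equiv\zeta_0\oplus\zeta_n\pmod2$, the map $\eta\mapsto(\zeta_1,\dots,\zeta_{n-1})$ is a bijection from $\O_{I_n}^\star$ onto $\{0,1\}^{\{1,\dots,n-1\}}$, with inverse $\eta_i=1-(\zeta_{i-1}\oplus\zeta_i)$. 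This is just the usual ``height function'' attached to a conserved parity, and is in the spirit of the dualities used elsewhere for BABP/DFP.

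\emph{The image measure.} Since $\eta_i=1$ iff $\zeta_{i-1}=\zeta_i$, and $\hat\pi^\star_n$ is the product Bernoulli($\hat p$) measure conditioned on $\cP=\star$, the image of $\hat\pi^\star_n$ under the bijection is
\[
\tilde\pi_n(\zeta)\ \propto\ \prod_{i=1}^{n}\big(\hat p\,\id_{\{\zeta_{i-1}=\zeta_i\}}+\hat q\,\id_{\{\zeta_{i-1}\neq\zeta_i\}}\big),\qquad \zeta_0=0,\ \zeta_n=\star,
\]
i.e.\ a one--dimensional nearest--neighbour (``Ising'') Gibbs measure on $\{0,1\}^{\{1,\dots,n-1\}}$ with the two frozen boundary values. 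Because $\hat p,\hat q\in(0,1)$ are fixed, all single--site conditional probabilities of $\tilde\pi_n$ lie in a fixed subinterval of $(0,1)$ uniformly in $n$ and in the boundary values; equivalently, $\tilde\pi_n$ satisfies the Dobrushin–Shlosman complete analyticity (strong mixing) condition with constants independent of $n$ --- this is automatic for finite--range Gibbs measures in one dimension.

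\emph{The image dynamics.} A short computation shows that each of the three DFP moves on the bond $\{i,i+1\}$ leaves every $\zeta_j$ with $j\neq i$ unchanged and flips $\zeta_i$: indeed $\eta_i\oplus\eta_{i+1}=\zeta_{i-1}\oplus\zeta_{i+1}$ is preserved by all three moves (trivially by the swap, and by $(0,0)\leftrightarrow(1,1)$ because $0\oplus0=1\oplus1$), hence so is $\zeta_{i+1}$ and all $\zeta_j$ with $j\ge i+1$, while $\zeta_i$ is flipped. Moreover every DFP move conserves $\cP$, so the chain restricted to $\cP=\star$ has the same generator (and Dirichlet form, a genuine sum over the $n-1$ bonds of $I_n$) as the full DFP on $I_n$. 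Therefore, in the $\zeta$ variables, the restricted DFP is exactly the single--site Glauber dynamics that flips $\zeta_i$ at rate
\[
\tilde r_i(\zeta)=
\begin{cases}
(\sqrt{1+\lambda}-1)^2/2 & \text{if }\zeta_{i-1}=\zeta_{i+1}=\zeta_i,\\
(\sqrt{1+\lambda}+1)^2/2 & \text{if }\zeta_{i-1}=\zeta_{i+1}\neq\zeta_i,\\
\lambda/2 & \text{if }\zeta_{i-1}\neq\zeta_{i+1},
\end{cases}
\]
where the first two cases are the images of the $(1,1)\to(0,0)$, resp.\ $(0,0)\to(1,1)$, moves (using $\eta_i=1\iff\zeta_{i-1}=\zeta_i$); reversibility with respect to $\tilde\pi_n$ is the image of the detailed balance of DFP, and can be checked directly from $\hat p/\hat q=(\sqrt{1+\lambda}+1)/(\sqrt{1+\lambda}-1)$. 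These rates depend only on $(\zeta_{i-1},\zeta_i,\zeta_{i+1})$ and are bounded above and below by positive constants depending only on $\lambda$.

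\emph{Conclusion and main point.} The proposition is thus reduced to bounding, uniformly in $n$, the logarithmic Sobolev constant of a single--site Glauber dynamics with finite--range, non--degenerate rates reversible with respect to a one--dimensional nearest--neighbour Gibbs measure. This is textbook: the uniform strong mixing of $\tilde\pi_n$ from the second step yields, through the equivalence between complete analyticity and the uniform logarithmic Sobolev inequality for Glauber dynamics in one dimension (see \cite[Section~3]{Martinelli99} and references therein), a bound $c_{sob}(n,\star)\le c(\lambda)$ uniform in $n$ and $\star$. The only real content is the change of variables together with the observation that \emph{both} DFP bond moves collapse to a single flip of $\zeta_i$; the point requiring care is verifying that the reduced rates are genuinely non--degenerate --- so that no residual ``constraint'' survives in the $\zeta$ picture --- and correctly matched to $\tilde\pi_n$, and that restricting to $\cP=\star$ deletes no moves (which holds precisely because every DFP move conserves parity). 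Everything after that is a citation to high--temperature one--dimensional spin dynamics.
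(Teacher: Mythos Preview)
Your argument is correct and is a genuinely different, more conceptual route than the paper's. Your key observation is that the partial--sum (``height'') map $\zeta_i=\sum_{j\le i}(1-\eta_j)\bmod 2$ turns the parity--restricted DFP on $I_n$ into an honest single--site Glauber dynamics for a one--dimensional nearest--neighbour Ising measure with fixed boundary spins $\zeta_0=0,\ \zeta_n=\star$; the three DFP bond moves all collapse to a flip of $\zeta_i$, the image rates are bounded above and below by positive constants depending only on $\lambda$, and the image measure is nearest--neighbour with uniformly non--degenerate single--site conditionals. From there the uniform logarithmic Sobolev inequality is indeed a standard quotation from the one--dimensional high--temperature theory, and comparability of your rates with heat--bath rates transfers the bound to your Dirichlet form.

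The paper, by contrast, works directly with $\hat\pi_n^\star$ and the DFP Dirichlet form and proves the bound by a recursive bisection: it establishes a quasi--factorisation of entropy via a decay--of--correlation estimate (the parity information propagated across the overlap is exponentially small in the overlap width), applies the inductive bound on each half, and then averages over many shifted choices of the two halves to absorb the double--counting of the overlap, obtaining $\gamma_{k+1}\le(1+\varepsilon_k)\gamma_k$ with $\sum_k\varepsilon_k<\infty$. This is essentially the Martinelli scheme you are citing, carried out by hand for this specific model. Your approach has the advantage of exposing the hidden Ising structure (which also explains why the parity constraint is harmless: it becomes a boundary condition) and delegating the analytic work to an existing theorem; the paper's approach is self--contained and would generalise to situations where no such convenient change of variables is available.
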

Back to the proof of \eqref{eq:DFP1}, let  $g_{t-s}=\hat P_{t-s}f -\hat\pi^\star(f),\, s=\log(t)^2$. 
Then, for any $\eta$ with $\cP(\eta)=\star$ and $a=1+e^{2s/c_{sob}}$, 
\begin{align*}
    |(\hat P_t f)(\eta)-\hat\pi^\star(f) | &= |(\hat P_s g_{t-s})(\eta)|\\
    \le \Big(\frac{\hat\pi^*\big(|\hat P_s g_{t-s}|^a\big)}{\hat\pi^*(\eta)}\Big)^{1/a} &\le \Big(\frac{1}{\hat\pi^*(\eta)}\Big)^{1/a}\hat \pi^*\big(g^2_{t-s}\big)^{1/2},
\end{align*}
where we used hypercontractivity of the chain in the last inequality \cite{Diaconis}. The choice of $s,a$ guarantees that $\Big(\frac{1}{\hat\pi^*(\eta)}\Big)^{1/a}=O(1)$ as $t\to \infty$, while the boundedness of $c_{sob}$ and the fact that $\hat\pi^\star(g_{t-s})=0$ imply that  
\[
\hat \pi^*\big(g^2_{t-s}\big)^{1/2}\le e^{-(t-s)/c_{sob}}\var_{\hat\pi^\star}(f)^{1/2},
\]
and \eqref{eq:DFP1} follows.
\begin{proof}[Proof of Proposition \ref{prop:sob}]
Let $\g_k= \max_{n \le 2^{k}+\sqrt{2^k}}\max_{\star}c_{sob}(n,\star)$. We will prove recursively that for all $k$ large enough 
\begin{equation}
\label{eq:DFP2}
\g_{k+1}\le (1+ \e_k)\g_k,\qquad \e_k= O(e^{-c' k}),  
  \end{equation}
  for some $c'>0$. 
Clearly \eqref{eq:DFP2} shows that $\sup_k\g_k<+\infty$, i.e. $c_{sob}<+\infty$. 

Fix an integer $n\in (2^{k}+\sqrt{2^k},2^{k+1}+\sqrt{2^{k+1}}]$ and cover the interval $I_n$ with  $\L_1=\{1,\dots, n_1\}$ and $\L_2=\{n_2,\dots, n\}$, where $n_1=\lfloor \frac n2 +n^{1/3}\rfloor$ and $n_2= \lfloor \frac n2\rfloor$. Finally, denote by  $\cF_1,\cF_2$ the $\s$-algebras generated by the variables $\{\eta_x\}_{x\in \{1,\dots, n_2-1\}}$ and $\{\eta_x\}_{x\in \{n_1+1,\dots, n\}}$ respectively. 
\begin{claim}
There exists a constant $c>0$ such that, for any function $g$ measurable w.r.t $\cF_1$ and any parity $\star$, 
\begin{equation}
    \label{eq:DFP3}
    \|\hat\pi_n^\star(g |\cF_2)-\hat\pi_n^\star(g)\|_\infty \le e^{-cn^{1/3}}\,\hat\pi_n^\star(|g|).
\end{equation}
\end{claim}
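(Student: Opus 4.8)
The claim is the last ingredient needed in the recursion of the proof of Proposition~\ref{prop:sob}, and it is a statement purely about the conditioned product measure $\hat\pi_n^\star$. Write $A=\{1,\dots,n_2-1\}$ and $B=\{n_1+1,\dots,n\}$, so that $\cF_1$ and $\cF_2$ are generated by the coordinates in $A$ and in $B$ respectively and $g$ is identified with a function of $(\eta_x)_{x\in A}$; these blocks are separated by the buffer $M=\{n_2,\dots,n_1\}$ of independent spins, and from $n_1=\lfloor n/2+n^{1/3}\rfloor$, $n_2=\lfloor n/2\rfloor$ one gets $|M|=n_1-n_2+1\ge n^{1/3}$. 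Set $\theta:=\hat p-\hat q$; using $\hat p=\tfrac{y+1}{2y}$, $\hat q=\tfrac{y-1}{2y}$ with $y=\sqrt{1+\lambda}>1$, we have $\theta=1/y\in(0,1)$, and it is this strict bound that will produce the exponential gain.

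The plan is to observe that, under $\hat\pi_n^\star$, the function $g$ and the $\sigma$-algebra $\cF_2$ interact only through parity. For a finite set $S$ let $N_S=\#\{x\in S:\eta_x=0\}$; then the product measure assigns $\hat\pi\big((-1)^{N_S}\big)=\theta^{|S|}$, and $\mathbf 1_{\{N_{I_n}\equiv\epsilon\,(2)\}}=\tfrac12\big(1+(-1)^\epsilon(-1)^{N_{I_n}}\big)$, with $\epsilon=0$ for $\star=+$ and $\epsilon=1$ for $\star=-$. Conditioning $\hat\pi_n^\star$ on $\cF_2$ fixes the configuration on $B$, hence $N_B\bmod 2$, and leaves on $A\cup M$ the product measure conditioned on $\{N_A+N_M\equiv\delta\,(2)\}$ for a $\delta\in\{0,1\}$ depending on $\cF_2$ only through $N_B\bmod 2$. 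Thus $\hat\pi_n^\star(g\mid\cF_2)$ takes at most two values, and expanding the parity indicator (recall $g$ depends on $A$ only, and $\hat\pi_{A\cup M}=\hat\pi_A\otimes\hat\pi_M$ with $\hat\pi_M((-1)^{N_M})=\theta^{|M|}$) gives the closed form
\[
v_\delta=\frac{\hat\pi_A(g)+(-1)^\delta\,\theta^{|M|}\,\hat\pi_A\!\big(g\,(-1)^{N_A}\big)}{1+(-1)^\delta\,\theta^{|A|+|M|}},\qquad\delta\in\{0,1\}.
\]

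Next I would bound $|v_0-v_1|$. A direct computation gives $v_0-v_1=\dfrac{2\theta^{|M|}\hat\pi_A(g(-1)^{N_A})-2\theta^{|A|+|M|}\hat\pi_A(g)}{1-\theta^{2(|A|+|M|)}}$; using $|\hat\pi_A(g(-1)^{N_A})|,|\hat\pi_A(g)|\le\hat\pi_A(|g|)$, $\theta^{|A|+|M|}\le\theta^{|M|}$ and $1-\theta^{2(|A|+|M|)}\ge\tfrac12$ (valid once $\theta^{|M|}\le 1/\sqrt 2$, i.e. $n$ large) this yields $|v_0-v_1|\le 8\,\theta^{|M|}\,\hat\pi_A(|g|)$. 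Applying the same closed form to $|g|$ shows that $\hat\pi_n^\star(|g|)$, being a convex combination of the analogous values $w_\delta\ge\tfrac{1-\theta^{|M|}}{1+\theta^{|M|}}\hat\pi_A(|g|)\ge\tfrac13\hat\pi_A(|g|)$ (for $n$ large), satisfies $\hat\pi_A(|g|)\le 3\,\hat\pi_n^\star(|g|)$. Finally $\hat\pi_n^\star(g)=\hat\pi_n^\star\big(\hat\pi_n^\star(g\mid\cF_2)\big)$ is a convex combination of $v_0$ and $v_1$, hence lies between them, so $\|\hat\pi_n^\star(g\mid\cF_2)-\hat\pi_n^\star(g)\|_\infty\le|v_0-v_1|\le 24\,\theta^{|M|}\,\hat\pi_n^\star(|g|)$. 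Since $|M|\ge n^{1/3}$ and $\theta\in(0,1)$, fixing any $c\in(0,\log(1/\theta))$ makes $24\,\theta^{|M|}\le e^{-c\,n^{1/3}}$ for all $n$ large, which is the claim.

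I do not expect a genuine obstacle: the estimate is elementary once phrased through the $\pm1$-valued characters of the parity group. The two points to watch are the two-sided control of the normalizations $1\pm\theta^{|A|+|M|}$ and $1-\theta^{2(|A|+|M|)}$ — which forces $n$ (equivalently $k$ in the recursion) to be large enough that $\theta^{|M|}\le\tfrac12$, harmless since the recursion of Proposition~\ref{prop:sob} only invokes the claim for large $k$ — and the fact that $g$ is signed, handled throughout by replacing $g$ with $|g|$ and using $|\hat\pi_A(g(-1)^{N_A})|\le\hat\pi_A(|g|)$. One should also record $|M|\ge n^{1/3}$, immediate from the floor definitions of $n_1,n_2$.
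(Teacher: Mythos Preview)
Your proof is correct and follows the same approach as the paper: both use the observation that $\hat\pi_n^\star(g\mid\cF_2)$ depends on $\cF_2$ only through the parity of $N_B$, together with the product structure of $\hat\pi$ and the identity $\hat\pi((-1)^{N_S})=(\hat p-\hat q)^{|S|}$ (equivalently, the binomial parity formula the paper cites). The paper leaves the computation implicit in three lines, whereas you spell out the closed form for $v_\delta$, the bound on $|v_0-v_1|$, and the comparison $\hat\pi_A(|g|)\le 3\,\hat\pi_n^\star(|g|)$; these details are all correct and exactly what is behind the paper's ``follows immediately''.
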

\begin{proof}[Proof of the claim]
The conditional expectation $\hat\pi_n^\star(g |\cF_2)$ depends on the variables generating $\cF_2$ only through their parity. Moreover, $g$ depends only on the first $n_2-1$ variables in $\L_1$. Hence,  \eqref{eq:DFP3} follows immediately by using the product structure of $\hat \pi$ together with the well known computation $\bbP(\text{Bin}(m,\hat p) \text{ is even })=\frac 12 +\frac 12 (1-2\hat p)^m$.     
\end{proof}
If we combine \eqref{eq:DFP3} together with \cite[Proposition 2.1]{Cesi}, we get the quasi-factorization of $\ent_{\hat \pi^\star_n}(f^2),$ namely
\begin{equation}
    \label{eq:DFP4}
   \ent_{\hat \pi^\star_n}(f^2)\le (1+O(\d_n))\hat\pi_n^\star\Big(\ent_{\hat \pi^\star_n(\cdot|\cF_2)}(f^2)+ \ent_{\hat \pi^\star_n(\cdot|\cF_1)}(f^2)\Big),
\end{equation}
where $\d_n=e^{-c\, n ^{1/3}}$. 

In order to bound the r.h.s. above, we observe that, for all $k$ large enough, the cardinality of $\L_1,\L_2$ is smaller than $2^k+\sqrt{2^k}$. Hence, the logarithmic Sobolev constant of the DFP in $\L_i,i=1,2$ with arbitrary parity is bounded from above by $\g_k$. By applying the logarithmic Sobolev inequality to each term $\hat\pi_n^\star\Big(\ent_{\hat \pi^\star_n(\cdot|\cF_i)}(f^2)\Big),i=1,2,$  we conclude that 
 \begin{equation}
 \label{eq:DFP5}
      \hat\pi_n^\star\Big(\ent_{\hat \pi^\star_n(\cdot|\cF_2)}(f^2)+ \ent_{\hat \pi^\star_n(\cdot|\cF_1)}(f^2)\Big)\le \g_k \cD^\star_n(f,f) + \g_k\cD_{\L_1\cap \L_2}^\star(f,f),
 \end{equation}
 where $\cD_{\L_1\cap \L_2}^\star(f,f)$ is the contribution to the total Dirichlet form $\cD_n^\star(f,f)$ of the updates of nearest neighbor pairs contained in $\L_1\cap \L_2$. Hence,
 \begin{equation}
     \label{eq:DFP6}
   \ent_{\hat \pi^\star_n}(f^2)\le (1+O(\d_n))\g_k\Big(\cD^\star_n(f,f) + \cD_{\L_1\cap \L_2}^\star(f,f)\Big)   
 \end{equation}
  The trick to overcome the double counting in the r.h.s. above is to spread it out over many similar choices of the sets $\L_1,\L_2$ (see \cite[Proof of Theorem 4.5]{Martinelli99}). More precisely, let $N= \lfloor n^{\frac 17}\rfloor,$ and let $\L_1^{(j)}=\{1,\dots, n_1^{(j)}\}, \L_2^{(j)}=\{n_2^{(j)},\dots,n\},j=1,\dots,N,$, where $n_1^{(j)}= \lfloor \frac n2 + j n^{1/3}\rfloor$ and $n_2^{(j)} = n_1^{(j-1)}+1$. For any $k$ large enough, the cardinality of each of the above sets is still at most $2^k+\sqrt{2^k}$ and $\L_1^{(j)}\cap \L_2^{(j)}$ are disjoint sets as $j$ varies.   We can then  apply \eqref{eq:DFP6} to each pair $\L_1^{(j)},\L_2^{(j)}$ and average the resulting inequality over $j$ to get
 \begin{align}
     \label{eq:DFP7}
     \ent_{\hat \pi^\star_n}(f^2)&\le (1+O(\d_n))\g_k\Big(\cD_n^\star(f,f)+ \frac{1}{N}\sum_{j=1}^N \cD_{\L^{(j)}_1\cap \L^{(j)}_2}^\star(f,f)\Big)\\
     &\le (1+O(\d_n))(1+ \frac{1}{N})\g_k \, \cD_n^\star(f,f).
 \end{align}
 In conclusion, for any $2^k+\sqrt{2^k} <n\le 2^{k+1}+\sqrt{2^{k+1}},$
 \[
 c_{sob}(n,\star)\le (1+O(\d_n))(1+ \frac{1}{\lfloor n^{\frac 17}\rfloor})\g_k \le (1+ O(e^{-c' k}))\g_k
 \]
for a suitable $c'>0$.  \eqref{eq:DFP2} is now established. 
\end{proof}
\subsubsection{Extension to higher dimensions}
We claim that the two main results presented in Applications 1 and 2 hold in any dimension. 

The self-duality of BABP and its quasi-duality with DFP also hold for the processes defined in $\mathbb {Z} ^d$, $ d\ge 2$. Theorem \ref{thm:DFP} also extends to higher dimensions because the logarithmic Sobolev constant of DFP in dimension $d$ can be bounded from above by the same constant in dimension $d=1$.  
To see that, suppose w.l.o.g.  $d=2$ and delete from $\bbZ^2$ all the vertical edges. On the new graph, consider the auxiliary process defined as the product of one-dimensional DFP, one for every horizontal copy of $\bbZ$. The auxiliary process is still reversible w.r.t. $\hat \pi$, and its Dirichlet form is smaller than the full DFP Dirichlet form because the contribution coming from the vertical edges is missing. Moreover, the logarithmic Sobolev constant of the auxiliary process is equal to that of the one-dimensional DFP because of the well-known tensorization property of the logarithmic Sobolev inequality \cite{Diaconis}.

\section{FA1\({\rm f}\) with finitely many infections}
In this section, we present some simple results for the FA-1f  process, starting with a finite number of infections. 

Given $\eta\in \O$ let $X^+(\eta)=\sup\{x:\ \eta_x=0\}, X^-(\eta)=\inf\{x:\ \eta_x=0\}$  be the position of the rightmost/leftmost infection of $\eta$ and set \[
Y(\eta)= \max(|X^+(\eta)|,|X^-(\eta)|),\quad  D(\eta)= X^+(\eta)-X^-(\eta).
\]
Let $\eta(t)$ be the FA-1f  process starting from $\eta$ and write $X^\pm(t),Y(t),D(t)$ for the quantities $X^\pm(\eta(t)),Y(\eta(t)), D(\eta(t))$. 
Before stating the main result on the long-time behaviour of $Y(t),D(t)$ when they are both finite, let us make some easy observations.
\begin{claim}   
\begin{enumerate}[(i)]
\item $ \limsup_{t\to +\infty} X^{+}(t)$ and $\liminf_{t\to +\infty} X^-(t)$ belong to $\{-\infty,+\infty\}$ a.s.
\item
If $\eta$ has a single vacancy at the origin then $\bbP_\eta(\limsup_{t\to +\infty} X^+(t)=+\infty)\ge 1/2$.
\item  If $Y(\eta)<+\infty$ then $\bbP_\eta(\limsup_{t\to +\infty} X^+(t)=+\infty)>0$.
\end{enumerate}
\end{claim}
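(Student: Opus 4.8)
Part (i) is the substantive one; parts (ii) and (iii) will follow from it together with, respectively, a reflection symmetry and an easy controllability argument. I would prove them in this order.

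\textbf{Part (i).} The crucial input is a uniform local estimate: there is a constant $c_0=c_0(q)>0$ such that for every configuration $\zeta$ whose rightmost vacancy sits at a finite site $r$ (i.e.\ $\zeta_r=0$ and $\zeta_y=1$ for all $y>r$),
\[
\bbP_\zeta\big(\exists\, s\le 1:\ X^+(s)\ge r+1\big)\ \ge\ c_0 .
\]
Indeed, on the event that the clock at $r$ does not ring on $[0,\tfrac12]$, that the clock at $r+1$ rings exactly once on $[0,\tfrac12]$, and that the corresponding coin toss equals $0$ --- an event of probability $e^{-1/2}\cdot\tfrac12 e^{-1/2}\cdot q=\tfrac{q}{2e}$, not depending on $\zeta$ to the left of $r$ --- at that ring the site $r$ is still vacant, the FA-1f constraint at $r+1$ is satisfied, and $r+1$ becomes vacant. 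Now fix $M\in\bbZ$ and suppose $\bbP_\eta(\limsup_{t}X^+(t)=M)>0$. On that event $X^+(t)\le M$ for all large $t$ while (since $X^+$ is a piecewise-constant $\bbZ\cup\{-\infty\}$-valued path) $X^+(t)=M$ along a sequence of times tending to infinity. Define the stopping times $\nu_1=\inf\{t:X^+(t)=M\}$ and $\nu_{k+1}=\inf\{t\ge\nu_k+1:X^+(t)=M\}$; on $\{\limsup_tX^+(t)=M\}$ all $\nu_k$ are finite and $\nu_k\to\infty$. By the strong Markov property at $\nu_k$ and the local estimate, $\bbP_\eta(A_k\mid\cF_{\nu_k})\ge c_0$ on $\{\nu_k<\infty\}$, where $A_k=\{\exists\, s\in[0,1]:X^+(\nu_k+s)\ge M+1\}$, and since $\nu_{k+1}\ge\nu_k+1$ we have $A_k\in\cF_{\nu_k+1}\subseteq\cF_{\nu_{k+1}}$. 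The conditional Borel--Cantelli lemma then forces infinitely many $A_k$ to occur a.s.\ on $\{\nu_k<\infty\ \forall k\}$, contradicting $X^+(t)\le M$ for all large $t$. Hence $\bbP_\eta(\limsup_tX^+(t)=M)=0$; summing over $M\in\bbZ$ gives $\limsup_tX^+(t)\in\{-\infty,+\infty\}$ a.s., and the statement for $\liminf_tX^-(t)$ is identical after the reflection $x\mapsto -x$, which leaves the FA-1f rates unchanged. The delicate point here is exactly the bookkeeping just described: the return times must be spaced at least $1$ apart so that the escape events are adapted and the conditional Borel--Cantelli lemma applies --- this is what I expect to be the main obstacle.

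\textbf{Part (ii).} First note that the fully healthy configuration $\underline{\mathbf 1}$ is never reached from a configuration containing a vacancy: the flip producing $\underline{\mathbf 1}$ would be a legal flip at the unique vacant site, whose two neighbours are healthy, so the constraint is violated. Hence, starting from $\eta=\underline{\mathbf 1}^{0}$ (the single vacancy at the origin), $X^-(t),X^+(t)$ stay finite and $X^-(t)\le X^+(t)$ for all $t$. By part (i), a.s.\ $\limsup_tX^+(t)\in\{\pm\infty\}$; and if $\limsup_tX^+(t)=-\infty$ then $X^+(t)\to-\infty$, so $X^-(t)\le X^+(t)\to-\infty$ and $\liminf_tX^-(t)=-\infty$. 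Therefore
\[
\{\limsup_{t}X^+(t)=+\infty\}\cup\{\liminf_{t}X^-(t)=-\infty\}
\]
has full measure. Since the initial configuration $\underline{\mathbf 1}^{0}$ and the dynamics are invariant under $x\mapsto-x$, we have $(X^+(t))_{t}\stackrel{d}{=}(-X^-(t))_{t}$, so $\bbP_\eta(\limsup_tX^+(t)=+\infty)=\bbP_\eta(\liminf_tX^-(t)=-\infty)$, and a union bound gives $1\le 2\,\bbP_\eta(\limsup_tX^+(t)=+\infty)$.

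\textbf{Part (iii).} If $Y(\eta)<\infty$ then $\eta$ has finitely many vacancies, all in $[-Y,Y]$, and at least one. There is a finite sequence of legal flips, all at sites of $[-Y,Y]$, that first fills the gaps between consecutive vacancies (each such flip being legal, adjacent to a vacancy) to create one contiguous block of vacancies, and then heals that block from its right end one site at a time (again each flip legal, adjacent to a vacancy) down to a single vacancy at some $z\in[-Y,Y]$, i.e.\ to the configuration $\underline{\mathbf 1}^{z}$. Requiring that the clocks inside $[-Y,Y]$ realise exactly this sequence within $[0,1]$ while the clocks of a fixed finite surrounding buffer stay silent on $[0,1]$ (so that the window $[-Y,Y]$ evolves autonomously and no vacancy is created outside it), we obtain $\bbP_\eta(\eta(1)=\underline{\mathbf 1}^{z})>0$: only finitely many clock rings, in a prescribed order and with prescribed coin values, are required. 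By the Markov property at time $1$ and translation invariance together with part (ii),
\[
\bbP_\eta(\limsup_tX^+(t)=+\infty)\ \ge\ \bbP_\eta\big(\eta(1)=\underline{\mathbf 1}^{z}\big)\cdot\bbP_{\underline{\mathbf 1}^{z}}(\limsup_tX^+(t)=+\infty)\ \ge\ \tfrac12\,\bbP_\eta\big(\eta(1)=\underline{\mathbf 1}^{z}\big)\ >\ 0 .
\]
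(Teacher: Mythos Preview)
Your proof is correct and follows essentially the same approach as the paper's, which is very terse: for (i) the paper just says ``every time $X^+(t)=x$ there is a positive chance to advance to $x+1$,'' for (ii) it uses the same symmetry/union-bound argument via $\{\limsup X^+=-\infty\}\subset\{\liminf X^-=-\infty\}$, and for (iii) it observes that with positive probability the process reaches a single-vacancy configuration by time $1$ and then invokes (ii). You simply supply the conditional Borel--Cantelli bookkeeping and the explicit clock/coin construction that the paper leaves implicit.
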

\begin{proof}\ 
\begin{enumerate}[(i)]
    \item Suppose by contradiction that $\limsup_{t\to +\infty} X^{+}(t)$ is finite, say equal to $x$ for some $x\in \bbZ.$ Then, every time that $X^+(t)=x$ there is a positive chance to advance to $x+1$ contradicting the hypothesis.
\item Let $\beta=\bbP_\eta(\limsup_{t\to +\infty} X^+(t)=-\infty)$. Then  $\bbP_\eta(\liminf_{t\to +\infty}X^-(t)=-\infty)\ge \b$  and symmetry implies that $1-\b=\bbP_\eta(\limsup_{t\to +\infty} X^+(t)=+\infty)\ge \b$, i.e. $\b\le 1/2.$ \item If $\eta$ has finitely many infections, with positive probability the process can reach the configuration with a single infection at time $t=1$. Hence, the statement follows from part (ii).   \end{enumerate}  
 \end{proof}
\begin{theorem}\label{thm:frontFAfullline}
For any $q\in (0,1]$ there exist two positive constants $b,c$ such that, for any $\eta$ with finitely many infections,  a.s. as $t\to +\infty$ 
\begin{align}
\label{eq:lingr Y}   b^{-1}t\le Y(t)&\le ct\\
 \label{eq:lingr D}     c^{-1}t \le D(t)&\le ct.
\end{align}
\end{theorem}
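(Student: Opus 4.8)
The plan is to get the upper bounds from finite speed of propagation and the lower bounds from an oriented‑percolation (coarse‑graining) comparison combined with a soft ``no drift'' argument. For the upper bounds, let $R(t)=\sup\{x:\eta_x(s)=0\text{ for some }s\le t\}$ and $L(t)=\inf\{x:\eta_x(s)=0\text{ for some }s\le t\}$ be the rightmost and leftmost ever‑infected sites up to time $t$. In FA-1f  a site is updated only when it has an infected neighbour, so $R$ advances by unit steps, and the $k$‑th step can occur only after an increasing sequence of $k$ Poisson rings, one from each of the clocks at $R(0)+1,\dots,R(0)+k$; hence $\bbP(R(t)-R(0)\ge k)\le t^{k}/k!$, and taking $k=\lceil ct\rceil$ with $c>e$, Borel--Cantelli over integer times together with the monotonicity of $R$ gives $R(t)\le R(0)+ct$ for all large $t$, a.s.; symmetrically $L(t)\ge L(0)-ct$. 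Since $L(t)\le X^-(t)\le X^+(t)\le R(t)$, for $t$ large (absorbing the finite initial data) we get $|X^\pm(t)|\le ct$, hence $Y(t)\le ct$ and $D(t)=X^+(t)-X^-(t)\le 2ct$, which is the right‑hand side of \eqref{eq:lingr Y}--\eqref{eq:lingr D} up to renaming $c$.

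For the lower bounds I would first establish the following \emph{linear escape with positive probability}: there are $\rho,\alpha>0$ such that for \emph{any} $\eta$ carrying an infection at some site $z$ one has $\bbP_\eta\big(X^+(s)\ge z+\rho s\ \text{ for all }s\ge 0\big)\ge\alpha$, and the mirror statement for $X^-$. The tool is a coarse‑graining of the graphical representation: tile the space--time cone to the right of $z$ into NE‑oriented blocks of spatial width $\ell$ and height $\tau=\Theta(\ell)$, and declare a block \emph{good} if, conditionally on an infection at its lower‑left corner, the clock rings and coin tosses \emph{inside the block} force an infection to have advanced by at least $\ell$ sites by the top of the block. The ingredient that survives the lack of attractiveness is the robust \emph{spreading move}: an infection at $x$ together with a ring at $x+1$ whose coin toss is $0$ forces $x+1$ to be infected just afterwards, whatever the state at $x+2$. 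Using this one checks that for $\tau/\ell$ large enough the front inside a block (which performs a random walk with strictly positive drift, so its displacement concentrates) makes net progress $\ell$ with probability tending to $1$ as $\ell\to\infty$, so the induced field of good blocks is supercritical for \emph{every} $q>0$; an infinite oriented path of good blocks out of the block of $z$ then keeps $X^+(s)\ge z+\rho s$ up to a bounded shift (cf. \cite{Durrett} and the block constructions in \cite{BCRT}).

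Second, I would rule out a ballistically drifting cloud, i.e. show $\bbP(X^-(t)\to+\infty)=0$ and, symmetrically, $\bbP(X^+(t)\to-\infty)=0$. The leftmost infection is pushed left by one site at rate exactly $q$ (a coin‑$0$ ring at $X^--1$, whose constraint is satisfied), whereas it moves right only upon being healed — which needs its right neighbour infected and occurs at rate $\le p$ — and a large rightward jump leaves $X^-$ at an \emph{isolated} infection, from which $X^-$ can only decrease; a short analysis then shows that $X^-$ returns arbitrarily far to the left, so it cannot tend to $+\infty$. Combining this with the preceding Claim (each of $\limsup_t X^+(t),\ \liminf_t X^-(t)$ lies in $\{\pm\infty\}$) and with $X^-\le X^+$ forces $\limsup_t X^+(t)=+\infty$ and $\liminf_t X^-(t)=-\infty$ a.s. Finally, applying the strong Markov property at the successive first‑passage times of $X^+$ to new record levels, together with the positive‑probability estimate of the first step and a Borel--Cantelli argument, upgrades this to $\liminf_{t\to\infty} X^+(t)/t\ge\rho/2$ a.s., and symmetrically $\liminf_{t\to\infty}(-X^-(t))/t\ge\rho/2$ a.s.; hence $Y(t)\ge\max\!\big(X^+(t),-X^-(t)\big)\ge\tfrac{\rho}{2}t$ and $D(t)=X^+(t)-X^-(t)\ge\rho t$ for all large $t$, which is the left‑hand side of \eqref{eq:lingr Y}--\eqref{eq:lingr D}.

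The hard part is the first step: arranging the ``good block'' event to be at once (i) measurable with respect to the clocks and coins inside the block, (ii) robust under the non‑attractive FA-1f  dynamics regardless of the surrounding configuration, and (iii) of probability close to $1$ — that is, turning the positive drift of the front into a genuinely supercritical oriented‑percolation comparison valid for every $q\in(0,1)$. (When $q=1$ all updates are infections, the infected set is monotone, and both bounds are immediate.) The no‑drift step is softer but still needs care, in particular in controlling the possibly large rightward jumps of $X^-$ uniformly in $q$; the concluding renewal/Borel--Cantelli upgrade is routine. Note that the $Y$ lower bound uses only the first step and the preceding Claim, whereas the $D$ lower bound genuinely requires the no‑drift step.
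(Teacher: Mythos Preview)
Your upper bounds via finite speed of propagation are correct and match the paper's (implicit) treatment.

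The lower-bound strategy has a genuine gap at its core. You correctly flag the block construction as the hard part, but your justification --- ``the front performs a random walk with strictly positive drift'' --- is not valid for small $q$: $X^+$ advances at rate $q$ but heals (whenever $X^+-1$ is infected) at rate up to $p$, so the naive drift is negative for $q<1/2$, and the true displacement of the front is governed by a delicate interaction with the bulk behind it, not by a simple biased walk. Producing a block event that is simultaneously local in the clocks/coins, robust to the outside configuration despite non-attractiveness, and of probability close to $1$ for \emph{every} $q\in(0,1)$ is exactly what the existing contact-process/oriented-percolation comparisons (e.g.\ \cite{BCRT}) fail to achieve for small $q$; your sketch supplies no new ingredient here. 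There is also an error in the no-drift step: after $X^-$ heals and jumps right, the new leftmost infection need not be isolated (its right neighbour may already be $0$), and even when it is isolated, $X^-$ can still increase --- the right neighbour of $X^-$ gets infected via $X^-$ and then $X^-$ heals, moving $X^-$ one step right. So ``from which $X^-$ can only decrease'' is false, and the argument ruling out $X^-\to+\infty$ is incomplete. Since your renewal upgrade for $X^+$ relies on $\limsup_t X^+(t)=+\infty$ a.s., this gap also propagates to the $Y$ lower bound.

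The paper avoids block constructions entirely. Its key input is the uniform positivity, for every $q\in(0,1)$ and every $n$, of the spectral gap of the FA-1f chain on a finite interval with healthy boundary (for $Y$) and on the discrete circle (for $D$). Via a general hitting-time estimate for reversible chains this gives $\bbP_\eta(\tau_n>\kappa_+ n)\le e^{-n}$ for $\tau_n=\inf\{t:Y(t)=n\}$; an excursion argument then shows that the probability of returning to level $0$ (or even retracing to level $\e n$) before reaching level $n+1$ is $\le e^{-cn}$, and Borel--Cantelli concludes. The spectral-gap bound is the substitute for your oriented-percolation step and is precisely what makes the argument go through for all $q$. If you want to rescue your scheme, the cleanest route is to use the spectral gap to certify that your block event has probability close to $1$ --- but that essentially reproduces the paper's proof.
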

\begin{remark}
    Notice that for BABP, the result follows at once from the linear growth in time of the cardinality of the infection proved in the first application. 
\end{remark}

\begin{proof}[Proof of \eqref{eq:lingr Y}]
We introduce the hitting times $\t_n=\inf\{t: Y(t)=n\}$ and the return times $\s_n=\inf\{t\ge \t_n: Y(t)=0\}$. In the sequel, we say that the process has an $n$-excursion if $\s_n<\t_{n+1}$ and we write $p_n$ for its probability. The duration of an $n$-excursion is $\sigma_n$. 
\begin{lemma}
\label{lem:3bis}
 There exist positive constants $\kappa_-, \kappa_+, c$ such that, for any large enough $n,$ $\bbP_\eta\big(\t_n \notin (\kappa_- n,\kappa_+ n) \big)\le e^{-n}$ and $p_n \le e^{-c' n}.$
 \end{lemma}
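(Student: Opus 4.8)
\emph{Overall plan, and the lower bound on $\tau_n$.} The lower bound on $\tau_n$ will come from finite speed of propagation, while the upper bound on $\tau_n$ and the decay of $p_n$ will both be deduced, after some bookkeeping, from a single quantitative input: the ``escaping'' front grows \emph{ballistically} with exponentially small deviations. When $q=1$ there is no recovery, $X^+$ is non-decreasing with $+1$-jumps at rate $1$, and every assertion is trivial; so assume $q\in(0,1)$. For the lower bound, note that $Y$ changes by at most one at each legal flip, so $\{\tau_n<\kappa_- n\}$ forces an infection to travel a distance $\ge n-Y(\eta)\ge n/2$ (for $n$ large) within time $\kappa_- n$; in the graphical construction this requires a space--time chain of $\ge n/2$ nearest-neighbour legal rings with increasing times in $[0,\kappa_- n]$. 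There are at most $C_\eta\,2^{n/2}$ such chains and each occurs with probability $\le(\kappa_- n)^{n/2}/(n/2)!$, so a union bound yields $\bbP_\eta(\tau_n<\kappa_- n)\le C_\eta(4e\kappa_-)^{n/2}\le e^{-n}$ once $\kappa_-$ is small enough and $n$ large.

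\emph{Front ballisticity.} This is the heart of the matter. The relevant structural feature of FA-1f\ is that an \emph{isolated} infection is frozen, so the number of infections never vanishes and the infected cluster keeps collapsing to a single infection. Call $R$ a \emph{regeneration time} if $\eta(R)$ is the single-infection configuration at the site $X^+(R)=\max_{s\le R}X^+(s)$. By the strong Markov property and the i.i.d.\ structure of the clocks and coins, after such an $R$ the process is a fresh copy of FA-1f\ started from one infection, translated by $X^+(R)$ and independent of the past. One shows that, on $A:=\{\limsup_t X^+(t)=+\infty\}$, regeneration times occur a.s.\ infinitely often and the pairs (elapsed time, front displacement) between consecutive ones are i.i.d.\ with exponential tails; hence on $A$ the front is, up to exponentially-tailed corrections, a renewal--reward process, so there is $v>0$ with $X^+(t)/t\to v$ a.s.\ on $A$, $X^+(t)\to+\infty$ on $A$, and for $m$ and $n$ large
\[
\bbP_\eta\big(A\cap\{\exists\,s\ge m:\ X^+(s)\le vs/2\}\big)\le e^{-cm},\qquad \bbP_\eta\big(A\cap\{\tau^+_n>\kappa_+ n\}\big)\le e^{-n},
\]
where $\tau^+_n=\inf\{t:X^+(t)=n\}$ and $\kappa_+$ is chosen large. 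By the reflection symmetry $\eta_x\mapsto\eta_{-x}$ the same bounds hold for $-X^-$ on $A^c=\{X^+(t)\to-\infty\}$, since $A^c$ is mapped by reflection into the analogue of $A$. Finally $A\cup A^c$ has full probability: $\limsup_t X^+(t)\in\{\pm\infty\}$ a.s.\ by the Claim above, and on $\{\limsup_t X^+=+\infty\}$ the renewal structure forces $X^+(t)\to+\infty$.

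\emph{Conclusion: the two bounds on $\tau_n$ and the excursion bound.} Since $Y(t)\ge\max(|X^+(t)|,|X^-(t)|)$ and $Y$ is skip-free upward, $\tau_n\le\tau^+_n$ and $\tau_n\le\tau^-_n:=\inf\{t:X^-(t)=-n\}$ always, so $\{\tau_n>\kappa_+ n\}\subseteq\big(A\cap\{\tau^+_n>\kappa_+ n\}\big)\cup\big(A^c\cap\{\tau^-_n>\kappa_+ n\}\big)$, and each piece is $\le e^{-n}$ by the previous paragraph (the second via reflection); together with the lower bound this gives $\bbP_\eta(\tau_n\notin(\kappa_- n,\kappa_+ n))\le e^{-n}$. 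For $p_n$: an $n$-excursion has $\sigma_n>\tau_n$ and $Y(\sigma_n)=0$, i.e.\ $X^+(\sigma_n)=X^-(\sigma_n)=0$, so on $\{\tau_n\ge\kappa_- n\}$,
\[
\{n\text{-excursion}\}\subseteq\{\exists\,s\ge\kappa_- n:\ X^+(s)=0\}\cap\{\exists\,s\ge\kappa_- n:\ X^-(s)=0\}.
\]
On $A$ the first event is contained in $\{\exists\,s\ge\kappa_- n:\ X^+(s)\le vs/2\}$ and so has probability $\le e^{-c\kappa_- n/2}$; on $A^c$ the second event is bounded the same way via reflection. Hence $p_n\le\bbP_\eta(\tau_n<\kappa_- n)+2e^{-c\kappa_- n/2}\le e^{-c'n}$.

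\emph{Main obstacle.} Everything outside the ballisticity statement is routine; the real work is the regeneration construction for the \emph{non-attractive} FA-1f\ dynamics. One must (i) produce a positive density of regeneration times — equivalently, rule out that the infected cluster stays large, or wanders far from its running maximum, for an atypically long stretch of time — and (ii) show the inter-regeneration times and front displacements have exponential tails. Morally these hold because a block of $k$ infections recovers at rate $\sim kp$, beating its growth rate $\sim 2q$, so the cluster is small most of the time; turning this heuristic into \emph{uniform} lower bounds for the regeneration events, with no monotonicity available, is the delicate point. (A multi-scale / oriented-percolation renormalisation in the spirit of Section~\ref{sec:delta} is a conceivable alternative, but the long-range influence inherent in the constraint makes the required block independence equally awkward to secure.)
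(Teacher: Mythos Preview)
Your proposal has a genuine gap, which you yourself identify: the entire argument rests on the ``front ballisticity'' input (regeneration times with exponential tails for the non-attractive FA-1f dynamics), and you do not prove it. You write that ``turning this heuristic into uniform lower bounds for the regeneration events, with no monotonicity available, is the delicate point'' --- but this \emph{is} the point, and without it nothing is established. Worse, in the architecture of the paper this lemma is precisely the \emph{input} from which the linear growth of $Y(t)$ (Theorem~\ref{thm:frontFAfullline}) is deduced; your proposal effectively assumes a statement at least as strong as the theorem in order to prove one of its lemmas.

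The paper's proof bypasses front dynamics entirely and is much shorter. For the upper tail of $\tau_n$ it observes that, until $Y$ reaches level $n$, the process coincides with the finite FA-1f chain on $[-n,n]$ with healthy boundary; a general hitting-time estimate for reversible chains (Aldous--Fill) together with the uniform positivity of the spectral gap gives $\bbP_\eta(\tau_n>t)\le (p^{2n}q)^{-1}e^{-q\gamma_\infty t}$, hence $\bbP_\eta(\tau_n>\kappa_+ n)\le \tfrac12 e^{-n}$ for $\kappa_+$ large. For $p_n$ it uses a neat bootstrap: if $p_n$ were not exponentially small, then with non-negligible probability the process would undergo $m=\lceil(\kappa_++1)/\kappa_-\rceil$ consecutive $n$-excursions, each of duration $\ge \kappa_- n$ (each excursion restarts the process from $\eta$, so they are i.i.d.), forcing $\tau_{n+1}\ge \kappa_+(n+1)$ and contradicting the upper-tail bound just proved. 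This yields $p_n\le (e^{-(n+1)}+me^{-n})^{1/m}$ directly, with no regeneration, no renewal theory, and no analysis of the fronts $X^\pm$.
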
   
\begin{proof}[Proof of the lemma]
For simplicity and without loss of generality, we assume that $\eta$ has a single infection at the origin.  

The fact that $\bbP_\eta(\t_n\le n/3)\le \frac 12 e^{-n}$ for any $n$ large enough follows from the usual finite speed of propagation argument (see e.g. \cite{HT}). Hence, we can choose $\kappa_-=1/3$. 

Next, we observe that, by construction, $\t_n$ coincides with the same hitting time for the finite FA-1f  \emph{chain} on the interval $\L_n=[-n,n]$ with healthy boundary condition at $\pm(n+1).$  For the latter, by applying  a general result on hitting times for Markov chains \cite[Proposition 3.21]{Aldous-Fill} we get
\begin{align*}
    \bbP_\eta(\t_n> t) &\le \frac{1}{p^{2n}q}\bbP_{\pi_{\L_n}}(\t_n\ge t)\le  \frac{1}{p^{2n}q}e^{- q \g_n  t},
    \end{align*}
where $\g_n\ge \g_\infty>0$ is the spectral gap of the chain on $\L_n$ with healthy boundary conditions. Hence, we can choose $\kappa_+$ large enough such that for all $n$ large enough $\bbP_\eta(\t_n> \kappa_+\,n)\le \frac 12 e^{-n}$.

In order to bound from above the probability of a $n$-excursion we observe that
the event $\{\t_{n+1} \ge \kappa_+ (n+1)\}$ is implied by the event that the process has $m=\lceil \frac{\kappa_+ + 1}{\kappa_-}\rceil$ $n$-excursions and each one of them has duration at least $\kappa_- n$. Call $\cB$ the latter event and observe that at the end of an $n$-excursion the process starts afresh from $\eta$. In particular, the probability of $m$ $n$-excursions is $p_n^m$. 
Using the first part of the lemma, we get
\begin{align*}
e^{-(n+1)}&\ge \bbP_\eta(\t_{n+1}\ge \kappa_+ (n+1))\ge \bbP_\eta(\cB)\ge p_n^m  - m e^{-n},
\end{align*}
where $m e^{-n}$ is a union bound of the probability that one $n$-excursion among the $m$ ones has a duration less than $k_- n$. In conclusion
\[
p_n\le \Big(e^{-(n+1)}+me^{-n}\Big)^{1/m}.
\]
\end{proof}
Next, we show that, once $Y(t)$ reaches level $n$, it is unlikely that it will retrace to level $\e n$ before reaching  level $n+1$. 
 \begin{lemma}
\label{lem:no retrace}
There exists $0<\e_0 <1$ and $c>0$ such that for all $0<\e\le \e_0$ and all $n$ large enough the following holds. Let $\tau_{\e n}:=\inf\{t\geq \tau_n: Y(t)=\lfloor \e n\rfloor\}$. 
$$\bbP_\eta(\tau_{\e n}<\t_{n+1})\le e^{-cn}.$$
\end{lemma}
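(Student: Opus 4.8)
By the strong Markov property at $\tau_n$ it suffices to bound $\bbP_\eta(\tau_{\e n}<\tau_{n+1})$ uniformly over all $\eta$ with $Y(\eta)=n$, and by the left--right symmetry of the FA-1f dynamics we may assume $X^+(\eta)=n$ (the case $X^-(\eta)=-n$ follows by reflection). Two elementary facts will be used. First, the front $X^+(t)$ moves only by $\pm 1$: it goes up by one at rate $q$ (a ring at $X^+ + 1$, whose constraint is automatically satisfied since $X^+$ is infected), and it can go down only when its left neighbour is infected, in which case it steps down by exactly one, onto that infected site. Second, on $\{\tau_{\e n}<\tau_{n+1}\}$ the front never exceeds $n$ during $[0,\tau_{\e n}]$, so all sites $>n$ stay healthy throughout that time interval, while at time $\tau_{\e n}$ all sites in $(\lfloor\e n\rfloor,n]$ are healthy.

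The natural approach, parallel to the proofs of Lemmas~\ref{lem:0bis} and~\ref{lem:2}, is a ``retreat cascade''. For $x\in\{\lfloor\e n\rfloor+1,\dots,n\}$ let $\theta_x$ be the last instant in $[0,\tau_{\e n}]$ at which $\eta_x$ flips from $0$ to $1$. One checks, by downward induction on $x$, that on $\{\tau_{\e n}<\tau_{n+1}\}$: each $\theta_x$ is well defined; $\theta_n<\theta_{n-1}<\dots<\theta_{\lfloor\e n\rfloor+1}$; after $\theta_x$ the whole block $\{x,x+1,\dots,n\}$ stays healthy until $\tau_{\e n}$; and at the clock ring $\theta_x$ the constraint at $x$ is met \emph{only through the left neighbour}, i.e.\ $\eta_{x-1}=0$ and $\eta_{x+1}=1$ just before $\theta_x$ (the latter since $x+1$ is already frozen healthy by then), with coin toss equal to $1$. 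Hence on the event there is a chain of about $(1-\e)n$ successful healing rings, ordered from right to left, each querying an infected left neighbour.

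Turning this cascade into the bound $e^{-cn}$ is the step I expect to be the real obstacle. In the oriented models one sums over the possible labels of the rings $\theta_x$ and extracts a factor $p$ per site, because the coin toss at $\theta_x$ is independent of the label of $\theta_x$; that independence relies on orientation, which keeps the coin tosses at $x$ from influencing the region that fixes the label. FA-1f is unoriented: the label of $\theta_x$ depends on the state of $x-1$, which the dynamics at $x-1$ queries, hence on the coin tosses at $x$, so no single revealing order makes all $n-\lfloor\e n\rfloor$ coin tosses simultaneously ``fresh'', and producing $\Theta(n)$ independent unlikely events along the forced retreat is the crux. The structural input I would try to exploit is the second bullet above: by time $\theta_x$ the block $\{x,x+1,\dots,n\}$ is permanently healthy, so from $\theta_x$ on site $x$ behaves like an East site with a healthy wall on its right, and the cascade moves this wall leftwards; one should then try to run the extraction conditionally on all Poisson clocks and on the coin tosses at the sites outside $\{\lfloor\e n\rfloor+1,\dots,n\}$, using the East-type boundary together with a two-scale decoupling. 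A fallback, better suited to small $q$ (where the bare front even has a downward drift and only the constraint structure behind it forbids a long retreat), is to look for a supermartingale of the form $e^{-\lambda(X^+(t)-R(t))}$ on $[0,\tau_{\e n}\wedge\tau_{n+1}]$, with $R(t)$ penalising how thin the infected droplet just behind the front is, and to conclude by optional stopping. I would pursue the cascade route first, as it is the closest analogue of the arguments already in the paper.
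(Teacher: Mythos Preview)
Your cascade approach hits exactly the obstacle you name, and you do not overcome it: in the unoriented FA-1f model the coin toss at $\theta_x$ genuinely feeds back into the dynamics to the left of $x$, so there is no revealing order that makes the $(1-\e)n$ coin tosses independent, and neither of your suggested fixes (conditional extraction with a two-scale decoupling, or an ad hoc supermartingale) is carried out. As written, the proposal is a heuristic plus two speculative directions, not a proof.

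The paper's argument is completely different and sidesteps the independence issue entirely. It bootstraps from the $n$-excursion bound $p_n\le e^{-c'n}$ already proved in Lemma~\ref{lem:3bis}. The idea is: if the process retraces from level $n$ down to level $\lfloor\e n\rfloor$ before reaching $n+1$, then with probability at least $e^{-O(\e n)}$ it can, in a further time of order $\e n$, collapse to the single vacancy at the origin (freeze the boundary sites $\pm(\lfloor\e n\rfloor+1)$ by asking their clocks not to ring, and use that the FA-1f chain on $[-\lfloor\e n\rfloor,\lfloor\e n\rfloor]$ with healthy boundary mixes in time $O(\e n)$, so it hits the single-vacancy state with probability $\ge \tfrac12 q p^{2\e n}$). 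That would complete an $n$-excursion, hence
\[
\tfrac12\, q\, p^{2\e n}\, e^{-2\l\e n}\,\bbP_\eta(\tau_{\e n}<\tau_{n+1})\ \le\ p_n\ \le\ e^{-c'n},
\]
and choosing $\e$ small enough gives the claim. So the key move you are missing is not to track the retreat combinatorially, but to observe that a retreat to $\e n$ can be cheaply (cost $e^{-O(\e n)}$) upgraded to a full return to the origin, and then invoke the already-established exponential bound on $p_n$.
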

\begin{proof}
Fix an arbitrary configuration  $\xi$ in the interval $I_{\e,n}=[-\lfloor \e n\rfloor,\lfloor \e n\rfloor]$ with at least one infection at the end points of $I_{\e,n}$ and let
$p_{\xi,\epsilon,n}(t) $ be the probability that the FA-1f  chain in $I_{\e,n}$, with healthy boundary conditions at $\pm (\lfloor \e n\rfloor+1)$ and initial state $\xi,$ has a single infection at the origin at time $t$. Using that the mixing time of the chain is $O(\e n)$, we can choose $\l=\l(q)$ large enough such that the variation distance of the law of the chain at time $t=\l \e n$ from $\pi_{I_{\e,n}}$ is smaller than $\frac 12 qp^{2 \e n}$. Hence, 
\[
\min_\xi p_{\xi,\epsilon n}(\l \e n)\ge \frac 12 qp^{2 \e n}.
\] 
Back to the FA-1f  process on the whole lattice, suppose now that $\tau_{\e n}< \t_{n+1}$ and that the clocks at $\pm (\lfloor \e n\rfloor+1)$ never ring between $\tau_{\e n}$ and $\tau_{\e n}+\l\e n $. Then the previous computation together with the strong Markov property prove that the process, after the hitting time $\t_{\e n}$ has probability at least $\frac 12 qp^{2 \e n}e^{-2 \l \e n}$ to go back to the configuration with a single infection at the origin before $\t_{n+1}$. Using Lemma \ref{lem:3}, we conclude that for all $n$ large enough
\[
 \frac 12 qp^{2 \e n}  e^{-2\l \e n}\times \bbP_\eta(\tau_{\e n}<\t_{n+1})\le p_n\le e^{-c' n},
\]
and the statement follows by taking $\e$ small enough.
\end{proof}
A simple Borel-Cantelli argument together with Lemma \ref{lem:3bis} and \ref{lem:no retrace} now proves the a.s. linear growth \eqref{eq:lingr Y} of $Y(t)$.  
\end{proof}
\begin{proof}[Proof of \eqref{eq:lingr D}]
To prove the linear growth of $D(t):=D(\eta(t))$, we proceed as follows. Let $\t_n=\inf\{t:D(t)=n\}$ and let 
$S_n$ be the discrete circle with $2n$ points which we label from $0$ to $2n-1$ in clockwise order starting e.g., from the North Pole. On $S_{n}$, consider the FA-1f  chain $\{\hat\eta(t)\}_{t\ge 0}$ starting with exactly one vacancy at the origin, and let $\hat D(t)$ be the largest distance on $S_n$ between two vacancies of $\hat \eta(t)$.   Let also $\hat \t_n=\inf\{t>0:\ \hat D(t)=n\}$. 
\begin{claim}
$\bbP(\t_n>t)\le \bbP(\hat \t_n>t)$.    
\end{claim}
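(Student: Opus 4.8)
The plan is to exhibit a single probability space on which the $\bbZ$--process and the $S_n$--process are coupled so tightly that $\tau_n=\hat\tau_n$, which of course implies the claimed inequality. As in the proof of \eqref{eq:lingr Y} I would first reduce to the case where $\eta$ is a single vacancy, placed at the origin, so that both chains start from the same configuration; for a general finite $\eta$ one uses, exactly as there, that the process reaches the single-vacancy state in bounded time with positive probability together with the a priori bound $D(t)\le ct$ coming from finite speed of propagation. Non-attractiveness of FA-1f is what forces this detour rather than a direct monotone comparison in the initial condition.

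The structural input I would isolate as a lemma is a \emph{confinement} property. For FA-1f started from one vacancy, let $L(t)$ be the length of the shortest interval of $\bbZ$ (resp.\ arc of $S_n$) containing every vacancy present at time $t$. Then $L(0)=0$ and each update raises $L$ by at most $1$: a newly created vacancy must sit next to an already empty site, hence it lies in the current cluster or immediately adjacent to it, while healings can only keep $L$ the same or decrease it. Thus $L$ reaches $n$ only by passing through $n-1$, and for every $t<\inf\{s:L(s)=n\}$ all vacancies lie in an interval/arc of length $<n$. On $\bbZ$ this interval has length exactly $D(t)$. On $S_n$, an arc of length $L<n$ has all its pairwise graph-distances equal to the corresponding arc-distances, since the complementary route has length $2n-L>n>L$; hence $\hat D(t)=L(t)<n$ there as well, and $L(t)=n$ would make the two arc-endpoints antipodal, i.e.\ $\hat D(t)=n$. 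Therefore $\tau_n=\inf\{t:L(t)=n\}$ and $\hat\tau_n=\inf\{t:L(t)=n\}$ in the two models, and it remains to show that these two exit times coincide.

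That is the coupling step. While $L(t)<n$ the vacancy cluster is, in each model, an interval/arc of length $<n$ all of whose exterior sites are healthy; enlarging it by one site on each side still yields a proper sub-arc of $S_n$ (length $\le n+1<2n$ for $n\ge 2$; $n=1$ is trivial), so there is no wrap-around interference. Consequently the FA-1f flip rates at the cluster and at its two boundary sites are the \emph{same} functions of the ``cluster shape'' — the configuration read on the cluster together with its two neighbours, up to translation — in the two models. I would then fix an isometry from the $\bbZ$-interval $[X^-(t),X^+(t)]$ onto the matching arc of $S_n$, let it drift with the cluster (winding around $S_n$ as often as need be, which is harmless since $S_n$ is vertex-transitive), and define $\hat\eta(t)$ to be the image configuration, driven by the same clocks and coins. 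By confinement this is a genuine realisation of FA-1f on $S_n$ up to the first time $\hat D=n$: throughout that phase $\hat D=L<n$, so $\hat\eta$ cannot reach $\hat D=n$ earlier, and at the common time $T=\inf\{t:L(t)=n\}$ one has simultaneously $D(T)=n$ and $\hat D(T)=n$. Hence $\tau_n=T=\hat\tau_n$ on this space, and in particular $\bbP(\tau_n>t)\le\bbP(\hat\tau_n>t)$ for all $t$.

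The step I expect to be the main obstacle is the careful justification that the circle process cannot leave the confined regime except by driving $L$ up to $n$ — i.e.\ that the only route to $\hat D=n$ is through an arc of length $n$ — together with the bookkeeping for the drifting isometry; once this is pinned down, the identification of rates and the conclusion are routine. A secondary but genuine nuisance is the reduction from a general finite $\eta$ to a single vacancy, precisely because FA-1f admits no monotone coupling in the initial datum.
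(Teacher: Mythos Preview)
Your proposal is correct and follows the same route as the paper. Both arguments wrap the line cluster onto the circle and observe that, while the cluster spans fewer than $n$ sites, the FA-1f flip rates at the cluster and its two boundary sites are identical in the two geometries, so a single graphical construction realises both processes until the span hits $n$; at that moment $D=n$ on $\bbZ$ and the arc endpoints become antipodal on $S_n$, giving $\hat D=n$ simultaneously. The paper phrases the coupling via the fixed wrap $x\mapsto x\bmod 2n$ (its map $\phi$), you via a drifting isometry tracking $[X^-(t),X^+(t)]$; these are equivalent bookkeeping for the same ``shape process''. Your confinement lemma (that $L$ moves by at most $1$, hence must pass through $n$) and your check that $\hat D=L$ while $L<n$ make explicit what the paper's single sentence ``the evolution of $\phi(\eta(s))$ is the correct one for the FA-1f chain on $S_n$ up to time $\tau_n$'' leaves implicit. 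You also correctly flag that the argument is written for a single initial vacancy on $\bbZ$---which is exactly the case the paper uses (and needs) for the excursion argument that follows.
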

\begin{proof}[Proof of the claim]
If $\t_n>t$ then for any $s<t$ we can map the process on $\bbZ$, $\eta(s),$ into a legal configuration $\phi(\eta(s))$ of the FA-1f  chain on $S_n$ as follows. Suppose that $X^+(\eta(s))= k\times 2n + m,$ with $m=[0,2n-1]$ and $k\in \bbZ$. Then we set $\phi(\eta(s))_{m-j}=\eta(s)_{k\times 2n +m-j}, j=0,\dots, n,$ and $\phi(\eta(s))_x=1$ everywhere else, where $m-j\in S_n$ is the vertex reached in $j$ counterclockwise steps from $m$. Starting with a single vacancy at the origin of $\bbZ$, the evolution of $\phi(\eta(s))$ is the correct one for the FA-1f  chain on $S_n$ up to time $\t_n$.    
\end{proof}
Using the claim and the fact that the FA-1f  chain on $S_n$ restricted to the set of configurations with at least one vacancy has a spectral gap that is uniformly positive in $n$, we obtain the analog of Lemma \ref{lem:3bis} for $\t_n$.  
Next, we define the $n$-excursion for the process $D(\cdot)$. 

As before, we let $\s_n=\inf\{t>\t_n:\ D(t)=0\},$ we say that there is a $n$-excursion if $\s_n<\t_{n+1},$ and we write $p_n$ for the probability of the latter event. Notice that if the process on $\bbZ$ had $k$ $n$-excursions before $\t_{n+1}$ then so does the chain on the torus $S_{n+2}$. Moreover, at the end of each excursion, both processes have a single vacancy somewhere, and they start afresh. Since each excursion lasts $\O(n)$ w.h.p. because of the finite speed of propagation, we easily get the analog of Lemma \ref{lem:3bis} and  
\ref{lem:no retrace} and from there the linear growth of $D(t)$.    
\end{proof}

\section*{Acknowledgements}
We would like to thank G. Ascione, A. Di Crescenzo and D. Marinucci for their kind invitation to contribute to the special number of the Bollettino dell’Unione Matematica Italiana dedicated to Probability Theory and Mathematical Statistics.

\bibliographystyle{plain}
\bibliography{FA1f}

\end{document}